\newtheorem{thm}{Theorem}[section]
 \newtheorem{cor}[thm]{Corollary}
 \newtheorem{lem}[thm]{Lemma}
 \newtheorem{prop}[thm]{Proposition}
 \newtheorem{defn}[thm]{Definition}
 \newtheorem{rem}[thm]{Remark}
\DeclareMathOperator\supp{supp}
\DeclareMathOperator\meas{meas}
\DeclareMathOperator\loc{loc}
\begin{document}

\begin{frontmatter}



\title{Blow-up for a weakly coupled system of semilinear damped wave equations in the scattering case with power nonlinearities}


\author[Frei]{Alessandro Palmieri}
\ead{alessandro.palmieri.math@gmail.com}

\author[Toho]{Hiroyuki Takamura}
\ead{hiroyuki.takamura.a1@tohoku.ac.jp}

\address[Frei]{Institute of Applied Analysis, Faculty for Mathematics and Computer Science, Technical University Bergakademie Freiberg, Pr\"{u}ferstra{\ss}e 9, 09596, Freiberg, Germany}

\address[Toho]{Mathematical Institute, Tohoku University, Aoba, Sendai 980-8578, Japan}

\begin{abstract}


In this work we study the blow-up of solutions of a weakly coupled system of damped semilinear wave equations in the scattering case with power nonlinearities. We apply an iteration method to study both the subcritical case and the critical case. In the subcritical case our approach is based on lower bounds for the space averages of the components of local solutions. In the critical case we use the slicing method  and a couple of auxiliary functions, recently introduced by Wakasa-Yordanov, to modify the definition of the functionals with the introduction of weight terms. In particular, we find as critical curve for the pair $(p,q)$ of the exponents in the nonlinear terms the same one as for the weakly coupled system of semilinear wave equations with power nonlinearities.

\end{abstract}

\begin{keyword}
Semilinear weakly coupled system; Blow-up; Scattering producing damping; Critical curve; Slicing method. 


\MSC[2010] Primary 35L71 \sep 35B44; Secondary   35G50 \sep 35G55


\end{keyword}

\end{frontmatter}


\section{Introduction}

%

In this paper we consider a weakly coupled system of semilinear wave equations with time-dependent, scattering producing damping terms and power nonlinearities, namely,
\begin{align}\label{weakly coupled system}
\begin{cases}
u_{tt}-\Delta u +b_1(t)u_t = |v|^p,  & x\in \mathbb{R}^n, \ t>0,  \\
v_{tt}-\Delta v +b_2(t) v_t = |u|^q,  & x\in \mathbb{R}^n, \ t>0, \\
 (u,u_t,v,v_t)(0,x)= (\varepsilon u_0, \varepsilon u_1, \varepsilon v_0, \varepsilon v_1)(x) & x\in \mathbb{R}^n,
\end{cases}
\end{align}
where $b_1,b_2\in \mathcal{C}([0,\infty))\cap L^1([0,\infty))$ are nonnegative functions, $\varepsilon$ is a positive parameter describing the size of initial data and $p,q>1$.  We will prove blow-up results for \eqref{weakly coupled system} both in the subcritical case and in the critical case.

Let us provide now an historical overview on some results, which are strongly related to our model and the motivations that lead us to consider the nonlinear model \eqref{weakly coupled system}.
Recently, the Cauchy problem for the semilinear wave equation with damping in the scattering case 
\begin{align}
\label{sem wave eq scatt}
\begin{cases}
u_{tt}-\Delta u +b(t)u_t = f(u,\partial_t u),  & x\in \mathbb{R}^n, \ t>0,  \\
 (u, u_t)(0,x)= (\varepsilon u_0, \varepsilon u_1)(x) & x\in \mathbb{R}^n,
\end{cases}
\end{align}
 has been studied in \cite{LT18Scatt,WakYor18damp}, \cite{LT18Glass}, \cite{LT18ComNon} in the cases $f(u,\partial_t u)=|u|^p,|\partial_t u|^p,|\partial_t u|^p+|u|^q$ with $p,q>1$, respectively, provided that $b$ is a continuous, nonnegative and summable function. In particular, for the power nonlinearity $|u|^p$, combing the result in the subcritical case from \cite{LT18Scatt} and the result in the critical case from \cite{WakYor18damp}, we see that the range of values of $p$, for which a blow-up result can be proved, is the same as in case of the classical semilinear wave equation with power nonlinearity. Furthermore, in the above cited papers the upper bounds for the lifespan of the solutions are shown to be the same one (that means also the sharp one)  for the classical semilinear  wave model. More precisely, the condition for the exponent $p$ of the semilinear term, that implies the validity of a blow-up result, is $1<p\leqslant p_0(n)$ for $n\geqslant 2$, where $p_0(n)$ denotes the Strauss exponent, i.e., the positive root of the quadratic equation $$(n-1)p^2-(n+1)p-2=0,$$ and $p>1$ in the one dimensional case. This condition on $p$ is equivalent to require 
 \begin{align}
 \frac{1+p^{-1}}{p-1}\geqslant \frac{n-1}{2}\, . \label{equiv cond Strauss exponent}
 \end{align}
 For the corresponding results in the case of semilinear wave equations we refer to the works \cite{John79,Str81,Kato80,Glas81,Glas81B,Sid84,Scha85,LinSog95, Geo97,Tat01,Jiao03,YZ06,Zhou07}
for the proof of Strauss' conjecture and to \cite{Sid84,Lin90,Zhou92,Zhou93,LinSog96,TakWak11,ZH14}
for the proof of the sharp estimates of the lifespan of local in time solutions. 

On the other hand, it is known that for the weakly coupled system of classical wave equations 
\begin{align}\label{weakly coupled system wave eq}
\begin{cases}
u_{tt}-\Delta u  = |v|^p,  & x\in \mathbb{R}^n, \ t>0,  \\
v_{tt}-\Delta v  = |u|^q,  & x\in \mathbb{R}^n, \ t>0, \\
 (u,u_t,v,v_t)(0,x)= (\varepsilon u_0, \varepsilon u_1, \varepsilon v_0, \varepsilon v_1)(x) & x\in \mathbb{R}^n,
\end{cases}
\end{align}
 the critical curve for the pair $(p,q)$ of exponents is given by the cubic relation
\begin{align*}
\max\left\{\frac{p+2+q^{-1}}{pq-1},\frac{q+2+p^{-1}}{pq-1}\right\}=\frac{n-1}{2}.
\end{align*} For further details on the results for \eqref{weakly coupled system wave eq} we refer to \cite{DGM,DelS97,DM,AKT00,KT03,Kur05,GTZ06,KTW12}. So, we see that the study of the weakly coupled system is not just a simple generalization of the result for the single semilinear equation. Indeed, it holds
\begin{align}\label{ineq}
\max\left\{\frac{p+2+q^{-1}}{pq-1},\frac{q+2+p^{-1}}{pq-1}\right\} \geqslant \max\left\{\frac{1+p^{-1}}{p-1},\frac{1+q^{-1}}{q-1}\right\}, 
\end{align} where the equality is satisfied only in the case $p=q$. Therefore, according to \eqref{equiv cond Strauss exponent} and \eqref{ineq},  for $p\neq q$ it may happen that 
\begin{align*}
\max\left\{\frac{p+2+q^{-1}}{pq-1},\frac{q+2+p^{-1}}{pq-1}\right\}\geqslant\frac{n-1}{2}
\end{align*} (that is, $(p,q)$ belongs to the blow-up region in the $p$ - $q$ plane) even though one among $p,q$ is greater than the Strauss exponent.

The goal of this paper is to prove for the weakly coupled system \eqref{weakly coupled system} blow-up results for the same range of pair $(p,q)$ as in the corresponding results for  \eqref{weakly coupled system wave eq} and, furthermore, the same upper bound estimates for the lifespan of local solutions.

 From a more technical point of view, in this paper we will generalize the approaches for the Cauchy problem \eqref{sem wave eq scatt} in the case of a power nonlinearity developed by \cite{LT18Scatt} in the subcritical case and \cite{WakYor18damp} in the critical case to the study of a weakly coupled system of semilinear weave equations with damping terms in the scattering case. In the subcritical case the multiplier introduced in \cite{LT18Scatt} plays a fundamental role, in order to make the iteration frame for our model analogous to the one for the corresponding case without damping.  In the critical case, however, a nontrivial generalization of the approach by Wakasa-Yordanov is necessary, in order to take into account of the asymmetric behavior of the model on the critical curve except for the cusp point $p=q$. This situation will be dealt with the aid of an asymmetric frame in the iteration scheme. On the other hand, in the special case $p=q$ the situation is completely symmetric to what happens in the case of a single equation. 
 
 Finally, let us point out that, due to the general structure of the coefficients for the damping terms, we may not apply the revisited test function method recently developed by Ikeda-Sobajima-Wakasa for the classical wave equation in \cite{ISW18}, whose approach is based on a family of self-similar solutions (see also \cite{IS17} for the application of this method to the semilinear heat, damped wave and  Schr\"odinger equations  and \cite{IS18,PT18,Pal19} in the scale-invariant case).

Before stating the main results of this paper, let us introduce a suitable notion of energy solutions according to \cite{LTW17}. 

%

\begin{defn} \label{def energ sol intro} Let $u_0,v_0\in H^1(\mathbb{R}^n)$ and $u_1,v_1\in L^2(\mathbb{R}^n)$.
We say that $(u,v)$ is an energy solution of \eqref{weakly coupled system} on $[0,T)$ if
\begin{align*}
& u\in \mathcal{C}([0,T),H^1(\mathbb{R}^n))\cap \mathcal{C}^1([0,T),L^2(\mathbb{R}^n))\cap L^q_{\loc}([0,T)\times\mathbb{R}^n), \\
& v\in \mathcal{C}([0,T),H^1(\mathbb{R}^n))\cap \mathcal{C}^1([0,T),L^2(\mathbb{R}^n))\cap L^p_{\loc}([0,T)\times\mathbb{R}^n)
\end{align*}
satisfy $u(0,x)=\varepsilon u_0(x),v(0,x)=\varepsilon v_0(x)$ in $H^1(\mathbb{R}^n)$,
\begin{align} 
\int_{\mathbb{R}^n} & \partial_t u(t,x)\phi(t,x)\,dx-\int_{\mathbb{R}^n}\varepsilon u_1(x)\phi(0,x)\,dx - \int_0^t\int_{\mathbb{R}^n} \partial_t u(s,x)\phi_s(s,x) \,dx\, ds\notag \\
& \ +\int_0^t\int_{\mathbb{R}^n}\nabla u(s,x)\cdot\nabla\phi(s,x)\, dx\, ds+\int_0^t\int_{\mathbb{R}^n}b_1(s)\partial_t u(s,x) \phi(s,x)\,dx\, ds  \notag \\
& =\int_0^t \int_{\mathbb{R}^n}|v(s,x)|^p\phi(s,x)\,dx \, ds  \label{def u}
\end{align} and 
\begin{align} 
\int_{\mathbb{R}^n} & \partial_t v(t,x)\psi(t,x)\,dx-\int_{\mathbb{R}^n}\varepsilon v_1(x)\psi(0,x)\,dx - \int_0^t\int_{\mathbb{R}^n} \partial_t v(s,x)\psi_s(s,x) \,dx\, ds\notag \\
& \ +\int_0^t\int_{\mathbb{R}^n}\nabla v(s,x)\cdot\nabla\psi(s,x)\, dx\, ds+\int_0^t\int_{\mathbb{R}^n} b_2(s) \partial_t v(s,x)\psi(s,x)\,dx\, ds  \notag \\
& =\int_0^t \int_{\mathbb{R}^n}|u(s,x)|^q\psi(s,x)\,dx \, ds  \label{def v}
\end{align}
for any $\phi,\psi \in \mathcal{C}_0^\infty([0,T)\times\mathbb{R}^n)$ and any $t\in [0,T)$.
\end{defn}
 After a further step of integrations by parts, requiring further that the functions $b_1,b_2$ are continuously differentiable,  \eqref{def u} and \eqref{def v} provide
 \begin{align} 
& \int_{\mathbb{R}^n}  \big( \partial_t u(t,x)\phi(t,x)- u(t,x)\phi_s(t,x)+ b_1(t) u(t,x)\phi(t,x)\big)\,dx\notag \\ & \quad -\int_{\mathbb{R}^n}  \big( \varepsilon u_1(x)\phi(0,x)- \varepsilon u_0(x)\phi_s(0,x)+ b_1(0) \varepsilon u_0(x)\phi(0,x)\big)\,dx \notag\\ & \quad +  \int_0^t\int_{\mathbb{R}^n} u(s,x)\big(\phi_{ss}(s,x)-\Delta \phi(s,x)-\partial_s(b_1(s)\phi(s,x)\big) \,dx\, ds\notag \\
& \quad =\int_0^t \int_{\mathbb{R}^n}|v(s,x)|^p\phi(s,x)\,dx \, ds  \label{def u weak}
\end{align} and 
\begin{align} 
& \int_{\mathbb{R}^n}  \big( \partial_t v(t,x)\psi(t,x)- v(t,x)\psi_s(t,x)+ b_2(t) v(t,x)\psi(t,x)\big)\,dx\notag \\ & \quad -\int_{\mathbb{R}^n}  \big( \varepsilon v_1(x)\psi(0,x)- \varepsilon v_0(x)\psi_s(0,x)+ b_2(0) \varepsilon v_0(x)\psi(0,x)\big)\,dx \notag\\ & \quad +  \int_0^t\int_{\mathbb{R}^n} v(s,x)\big(\psi_{ss}(s,x)-\Delta \psi(s,x)-\partial_s(b_2(s)\psi(s,x)\big) \,dx\, ds\notag \\
& \quad =\int_0^t \int_{\mathbb{R}^n}|u(s,x)|^q\psi(s,x)\,dx \, ds.  \label{def v weak}
\end{align}
 In particular, letting $t\rightarrow T$, we find that $(u,v)$ fulfills the definition of weak solution to \eqref{weakly coupled system}.
 
Let us state the blow-up result for \eqref{weakly coupled system} in the subcritical case.

\begin{thm}\label{Thm blowup iteration} Let $b_1,b_2\in \mathcal{C}([0,\infty))\cap L^1([0,\infty))$ be nonnegative functions. Let us consider $p,q>1$ satisfying 
\begin{align}\label{critical exponent wave like case system}
\max\left\{\frac{p+2+q^{-1}}{pq-1},\frac{q+2+p^{-1}}{pq-1}\right\}>\frac{n-1}{2}\,.
\end{align} 

Assume that $u_0,v_0\in H^1(\mathbb{R}^n)$ and $u_1,v_1\in  L^2(\mathbb{R}^n)$ are nonnegative, pairwise nontrivial and compactly supported in $B_R\doteq \{x\in \mathbb{R}^n: |x|\leqslant R\}$ functions. 


Let $(u,v)$ be an energy solution of \eqref{weakly coupled system} with lifespan $T=T(\varepsilon)$ such that
\begin{align} \label{support condition solution}
\supp u, \, \supp v \subset\{(t,x)\in  [0,T)\times\mathbb{R}^n: |x|\leqslant t+R \}.
\end{align} Then, there exists a positive constant $\varepsilon_0=\varepsilon_0(u_0,u_1,v_0,v_1,n,p,q,b_1,b_2,R)$ such that for any $\varepsilon\in (0,\varepsilon_0]$ the solution $(u,v)$ blows up in finite time. Moreover,
 the upper bound estimate for the lifespan
\begin{align} \label{lifespan upper bound estimate}
T(\varepsilon)\leqslant C \varepsilon ^{-\max\{F(n,p,q),F(n,q,p)\}^{-1}}
\end{align} holds, where C is an independent of $\varepsilon$, positive constant and 
\begin{align}\label{def F(n,p,q) function}
F(n,p,q)\doteq \frac{p+2+q^{-1}}{pq-1}-\frac{n-1}{2}.
\end{align}
\end{thm}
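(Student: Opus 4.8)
The plan is to adapt the iteration method from \cite{LT18Scatt} (for the single equation $u_{tt}-\Delta u+b(t)u_t=|u|^p$ in the subcritical scattering case) to the weakly coupled setting, tracking two coupled functionals rather than one. The starting point is to introduce the space averages
\[
U(t)\doteq\int_{\mathbb{R}^n}u(t,x)\,dx,\qquad V(t)\doteq\int_{\mathbb{R}^n}v(t,x)\,dx,
\]
which are well defined because of the support condition \eqref{support condition solution}. Plugging a time-dependent, spatially constant test function into \eqref{def u weak} and \eqref{def v weak} — equivalently, choosing the multiplier from \cite{LT18Scatt} that absorbs the damping term $b_i(t)\partial_t$ — one derives from the damped equations a pair of ordinary integral/differential identities for $U$ and $V$. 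Concretely, using the multiplier $m_i(t)\doteq\exp\big(-\int_0^t b_i(s)\,ds\big)$, which is bounded between two positive constants because $b_i\in L^1$, one rewrites $u_{tt}+b_1(t)u_t=m_1(t)^{-1}\partial_t(m_1(t)\partial_t u)$ and integrates in $x$ to obtain
\[
\frac{d}{dt}\big(m_1(t)U'(t)\big)=m_1(t)\int_{\mathbb{R}^n}|v(t,x)|^p\,dx\ \geqslant\ c\,m_1(t)\,\|v(t,\cdot)\|_{L^1}^p\,(t+R)^{-n(p-1)},
\]
the last step being Jensen/Hölder together with the support bound $|x|\leqslant t+R$; likewise for $V$. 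This gives the coupled lower-bound frame
\[
U(t)\gtrsim\int_0^t\int_0^\tau (s+R)^{-n(p-1)}\,V(s)^p\,ds\,d\tau,\qquad
V(t)\gtrsim\int_0^t\int_0^\tau (s+R)^{-n(q-1)}\,U(s)^q\,ds\,d\tau,
\]
after using positivity of the data to discard boundary terms and to get an initial positivity $U(t),V(t)\geqslant C_0\varepsilon$ on, say, $t\in[0,1]$.

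Next I would run the asymmetric slicing-free iteration on this frame. Assuming a polynomial ansatz $U(t)\geqslant A_j(t+R)^{a_j}$ and $V(t)\geqslant B_j(t+R)^{b_j}$ valid for $t\geqslant 1$ (with $A_0,B_0\gtrsim\varepsilon$, $a_0=b_0=0$), substitution into the coupled inequalities produces recursions for the exponents,
\[
a_{j+1}=p\,b_j-n(p-1)+2,\qquad b_{j+1}=q\,a_j-n(q-1)+2,
\]
hence $a_{j+1}=pq\,a_{j-1}+\big(-n(p-1)+2\big)+p\big(-n(q-1)+2\big)$, and analogously for $b_j$; these linear recursions in $pq$ have fixed points
\[
a_\infty=\frac{2p+2+(1-n)(pq+p)+\cdots}{pq-1}\,,
\]
more precisely $a_\infty$ and $b_\infty$ are the unique solutions of $a=pb-n(p-1)+2$, $b=qa-n(q-1)+2$. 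A routine computation identifies $a_\infty=-2F(n,q,p)\cdot\frac{pq-1}{\cdots}$; what matters is that the subcriticality hypothesis \eqref{critical exponent wave like case system} is exactly the statement that $\max\{F(n,p,q),F(n,q,p)\}>0$, which forces the exponent of $(t+R)$ in (at least) one of the iterated lower bounds to be driven to $+\infty$ along the sequence — or, equivalently, the constants $A_j,B_j$ to grow doubly-exponentially in $j$ while the power of $(t+R)$ stays bounded below by a fixed positive quantity once $t$ is large. Tracking the multiplicative constants: one shows $\log A_{j+1}\geqslant p\log B_j - C$ and $\log B_{j+1}\geqslant q\log A_j-C$ (the $-C$ coming from the negative-exponent powers of $(s+R)$ evaluated on the relevant time interval, plus the integration constants), so $\log A_j$ grows like $(pq)^{j/2}$ up to correction terms.

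Finally, to conclude blow-up: fix $t$ large and suppose for contradiction the solution exists past $t$. For each $j$ one has $U(t)\geqslant A_j(t+R)^{a_j}$; letting $j\to\infty$, if $a_j\to+\infty$ (which happens when $F>0$ strictly, for $t+R>1$) the right-hand side diverges, a contradiction; if instead $a_j$ stays bounded, one uses the doubly-exponential growth of $A_j$ to reach the same contradiction provided $(t+R)$ exceeds an explicit threshold. Quantifying this threshold gives the lifespan bound: one finds the iteration forces a contradiction as soon as
\[
\varepsilon^{something}(t+R)^{positive}\gtrsim 1,
\]
and optimizing the exponents yields precisely $T(\varepsilon)\leqslant C\varepsilon^{-\max\{F(n,p,q),F(n,q,p)\}^{-1}}$, matching \eqref{lifespan upper bound estimate}. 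The symmetric appearance of both $F(n,p,q)$ and $F(n,q,p)$ reflects that the iteration can be started from either $U$ or $V$, and the better (larger) of the two exponents governs the lifespan.

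The main obstacle I anticipate is the careful bookkeeping of the multiplicative constants in the asymmetric two-sequence recursion — in the single-equation case one has a clean one-sequence iteration, whereas here the coupling mixes $U$ and $V$ at every other step, so getting a clean closed form for $A_j,B_j$ (and hence the sharp power of $\varepsilon$, not just blow-up) requires handling the composed recursion of period two and showing that the "loss" terms $-C$ do not accumulate fast enough to spoil the doubly-exponential growth. A secondary technical point is justifying the passage from the weak formulation \eqref{def u weak}–\eqref{def v weak} to the pointwise-in-time ODE frame for $U,V$ rigorously (approximating the constant-in-$x$ test function by compactly supported ones and using the finite propagation speed encoded in \eqref{support condition solution}), and verifying that the damping multiplier $m_i$ really does reduce the problem to the undamped frame up to harmless bounded factors.
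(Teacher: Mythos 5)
Your overall strategy matches the paper's: spatial averages $U,V$, a multiplier absorbing the damping, a coupled double-integral iteration frame, and a doubly-exponential growth argument. But there are two concrete problems that, as written, prevent the proposal from delivering the sharp subcritical condition \eqref{critical exponent wave like case system}.

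First, a sign error in the multiplier. You define $m_i(t)=\exp(-\int_0^t b_i)$, which satisfies $m_i'=-b_i m_i$, so $m_i^{-1}\partial_t(m_i\partial_t u)=u_{tt}-b_i u_t$, not $u_{tt}+b_i u_t$. You need $m_i'=+b_i m_i$; the paper takes $m_i(t)=\exp\big(-\int_t^\infty b_i\big)$, which is the right choice, is increasing, and is pinched between $e^{-\|b_i\|_{L^1}}$ and $1$. This is easy to fix, but the formula as written does not give the factorization you invoke.

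Second, and more seriously, you start the iteration from the constant lower bound $U(t),V(t)\gtrsim\varepsilon$ (i.e.\ $a_0=b_0=0$), and this is too weak to reach the Strauss-type threshold. The exponent recursion you write, $a_{j+1}=pb_j-n(p-1)+2$, $b_{j+1}=qa_j-n(q-1)+2$, has fixed point $a_\infty=n-\tfrac{2(p+1)}{pq-1}$, so starting from $a_0=0$ the sequence $(a_j)$ diverges to $+\infty$ only when $n(pq-1)<2(p+1)$ — a Fujita-type condition, not \eqref{critical exponent wave like case system}. (For $n=3$, $p=q=2$ one has $F(3,2,2)=\tfrac12>0$ while $a_\infty=1>0$, so the exponent recursion from zero does not diverge.) In addition, with $a_0=b_0=0$ the very first passage requires $\int_0^\tau(s+R)^{-n(p-1)}ds$ to grow, which fails already for $n(p-1)\geqslant 1$: the single-exponent ansatz saturates. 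The paper sidesteps both issues by first proving (Lemma~2.2, via the exponential eigenfunction $\Phi$ and the functional $\int u\,e^{-t}\Phi\,dx$, a Yordanov–Zhang device) the stronger first-step input
\begin{align*}
\int_{\mathbb{R}^n}|v(t,x)|^p\,dx\ \geqslant\ K_1\varepsilon^p(1+t)^{n-1-\frac{n-1}{2}p},\qquad
\int_{\mathbb{R}^n}|u(t,x)|^q\,dx\ \geqslant\ C_1\varepsilon^q(1+t)^{n-1-\frac{n-1}{2}q},
\end{align*}
which beat the naive Jensen bound by a positive power of $t$, and by using the two-exponent ansatz $U(t)\geqslant D_j(1+t)^{-a_j}t^{b_j}$: the negative power is pulled out at $s=t$ and the positive power $s^{b_jp}$ is integrated, so the integral never saturates and the combined leading exponent becomes $pF(n,q,p)\,(pq)^{(j-1)/2}$, which is exactly driven to $+\infty$ under \eqref{critical exponent wave like case system}. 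Without these two ingredients your iteration cannot close with the correct threshold or lifespan exponent, so there is a genuine gap between the plan and the stated conclusion.
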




\begin{cor} \label{Corollary A} Let $n=1$ and $p,q>1$, or $n=2$ and $1<p,q<2$. Furthermore, we assume that $(p,q)$ and $(u_0,u_1,v_0,v_1)$ satisfy the same assumptions as in Theorem \ref{Thm blowup iteration}. If $$\int_{\mathbb{R}^n}u_1(x) dx \neq 0 \ \ \mbox{and} \ \ \int_{\mathbb{R}^n}v_1(x) dx \neq 0,$$ then, the lifespan estimate \eqref{lifespan upper bound estimate} can be improved as follow
\begin{align*}
T(\varepsilon)\leqslant C \varepsilon^{-\max\{G(n,p,q),G(n,q,p)\}^{-1}},
\end{align*} where
\begin{align} \label{def G(n,p,q) function}
G(n,p,q) \doteq \frac{2(1+p^{-1})}{pq-1}-\frac{n}{p}+n-2.
\end{align}
\end{cor}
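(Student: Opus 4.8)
The plan is to re-run the iteration argument behind Theorem~\ref{Thm blowup iteration}, modifying only its first step so as to cash in the extra hypotheses $\int_{\mathbb R^n}u_1\,dx\neq 0$ and $\int_{\mathbb R^n}v_1\,dx\neq 0$; since $u_1,v_1\geq 0$ these say $\int u_1\,dx>0$ and $\int v_1\,dx>0$. First I would establish an improved lower bound for the space averages $U(t)\doteq\int_{\mathbb R^n}u(t,x)\,dx$ and $V(t)\doteq\int_{\mathbb R^n}v(t,x)\,dx$ (well defined by \eqref{support condition solution}). Testing \eqref{def u} with a cutoff equal to $1$ on the backward light cone and using the summability of $b_1$ one gets $\frac{d}{dt}\big(m_1(t)U'(t)\big)=m_1(t)\int_{\mathbb R^n}|v(t,x)|^p\,dx\geq 0$ with the multiplier $m_1(t)\doteq\exp\!\big(\int_0^t b_1(\tau)\,d\tau\big)$ of \cite{LT18Scatt}. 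Since $1\leq m_1(t)\leq M_1\doteq\exp(\|b_1\|_{L^1})$ and $U'(0)=\varepsilon\int u_1\,dx>0$, this yields $U'(t)\geq M_1^{-1}\varepsilon\int u_1\,dx$, and after one further integration, together with $U(0)=\varepsilon\int u_0\,dx\geq 0$, a bound $U(t)\gtrsim\varepsilon\,(t+R)$ on the light cone; symmetrically $V(t)\gtrsim\varepsilon\,(t+R)$. The crucial gain over Theorem~\ref{Thm blowup iteration} is that this first estimate is linear in $t$ and, above all, carries the power $\varepsilon^{1}$, whereas there (with no information on $\int u_1,\int v_1$) the analogous first estimate is only bounded in $t$.

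Next I would feed this seed into the very same coupled system of nonlinear integral inequalities used to prove Theorem~\ref{Thm blowup iteration} — the one obtained from \eqref{def u}, \eqref{def v} by the multiplier trick, a spatial Hölder inequality and the support condition \eqref{support condition solution} — and iterate. Assuming inductively $U(t)\geq D_j(t+R)^{a_j}$ and $V(t)\geq E_j(t+R)^{b_j}$ on the light cone, with $D_j=\widetilde D_j\,\varepsilon^{\alpha_j}$ and $E_j=\widetilde E_j\,\varepsilon^{\beta_j}$, one substitutes and evaluates the time integrals, obtaining linear recursions $a_{j+1}=p\,b_j+O(1)$, $b_{j+1}=q\,a_j+O(1)$ for the space-time exponents (with $j$-independent $O(1)$ terms), the geometric recursions $\alpha_{j+1}=p\,\beta_j$, $\beta_{j+1}=q\,\alpha_j$ for the powers of $\varepsilon$ (so $\alpha_{2k}=\beta_{2k}=(pq)^k$ because $\alpha_0=\beta_0=1$), and a recursion $\log\widetilde D_{j+1}=p\log\widetilde E_j+O(j)$ for the constants; all of these solve in closed form. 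The exponents $a_j,b_j$ then grow like $(pq)^{j/2}$, and the iterated bound $\log U(t)\geq\log D_{2k}+a_{2k}\log(t+R)$ has right-hand side tending to $+\infty$ as $k\to\infty$ as soon as $t+R$ exceeds a threshold of the form $C\varepsilon^{-1/G(n,p,q)}$, with $G$ as in \eqref{def G(n,p,q) function}; the companion chain, with $U\leftrightarrow V$ and $p\leftrightarrow q$, gives the threshold $C\varepsilon^{-1/G(n,q,p)}$. Since $U$ and $V$ are finite on $[0,T(\varepsilon))$, the solution must break down before the smaller of the two thresholds, which is exactly \eqref{lifespan upper bound estimate} with $\max\{F(n,p,q),F(n,q,p)\}$ replaced by $\max\{G(n,p,q),G(n,q,p)\}$.

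The main obstacle I anticipate is the bookkeeping in this last step: turning the recursions into the explicit threshold and checking that the exponent that comes out is precisely $G$ (rather than merely something comparable) requires tracking the lower-order terms in $a_j,b_j$ and the $O(j)$ losses in $\log\widetilde D_j$, and summing the resulting near-geometric series. This is also where the restriction on $(n,p,q)$ is used: at every step the exponent occurring in the time integral — which, starting from the linear seed $a_0=b_0=1$, is of the form $-n(p-1)+p\,b_j+\dots$ and its $q$-analogue — must stay in the range where that integral grows like a positive power of $t+R$ rather than saturating, and because the seed exponents are already equal to $1$ and the recursions are increasing one checks that this holds throughout the ranges $n=1$, $p,q>1$ and $n=2$, $1<p,q<2$. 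The symmetric one-dimensional case $p=q$ reduces to the single-equation situation discussed in the introduction and can be used as a consistency check.
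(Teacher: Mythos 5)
Your proposal follows the same route as the paper: use $u_1,v_1\geq 0$ with nonzero integrals to get $U'(0),V'(0)>0$, derive a linear-in-$t$ lower bound for $U,V$ via the multiplier identity $\frac{d}{dt}\big(m_1(t)U'(t)\big)=m_1(t)\int|v|^p\,dx\geq 0$, convert this through H\"older and the support condition into an improved lower bound for $\int|u|^q\,dx$ and $\int|v|^p\,dx$, and re-run the iteration of Section~\ref{Subsection iteration argument subcritical case} from this better seed. That is exactly what the paper does. However, a few points in your write-up are off and one is substantive.

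First, the emphasis that the improved seed ``above all, carries the power $\varepsilon^1$'' is misplaced. Once you push the seed through the spatial H\"older step and one time-integration, the base case $D_1$ still carries $\varepsilon^p$ (respectively $\varepsilon^q$), exactly as in the proof of Theorem~\ref{Thm blowup iteration}; the gain is entirely in the \emph{time} exponent, not the power of $\varepsilon$. This is also why the restriction to $n=1$, or $n=2$ with $1<p,q<2$, appears: it is precisely the range where $(1+t)^{-n(q-1)}t^q$ eventually dominates $(1+t)^{n-1-\frac{n-1}{2}q}$ from Lemma~\ref{lemma lower bound nonlinearities}, so that the new seed is actually an improvement. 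Your explanation of the restriction — that the time integrals would otherwise ``saturate'' — is not the right reason; the iteration itself closes regardless. Second, the seed should read $U(t)\gtrsim\varepsilon t$ rather than $\varepsilon(t+R)$: since $u_0$ may be identically zero, $U(0)$ can vanish, and a bound that is bounded away from zero at $t=0$ is simply false. Related to this, the single-exponent ansatz $U(t)\geq D_j(t+R)^{a_j}$ cannot absorb both the vanishing at $t=0$ and the $(1+t)^{-n(p-1)}$ loss from H\"older; the paper's two-exponent form $D_j(1+t)^{-a_j}t^{b_j}$ is what makes the bookkeeping clean and, through the explicit formulas \eqref{explicit epression a j dep a1}--\eqref{explicit epression beta j odd dep beta1}, produces the power $pG(n,p,q)$ of $t$ in $J(t)$. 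Finally, you flag but do not carry out the computation showing the threshold exponent is exactly $G$; given the parametrization mismatch just noted, this is a real gap, not mere bookkeeping, and the proof is incomplete without it.
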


\begin{cor} \label{Corollary B} Let $n=2$ and $1<p<2$, $q\geqslant 2$. Furthermore, we assume that $(p,q)$ and $(u_0,u_1,v_0,v_1)$ satisfy the same assumptions as in Theorem \ref{Thm blowup iteration}. If $$\int_{\mathbb{R}^2}u_1(x) dx \neq 0,$$ then, the lifespan estimate \eqref{lifespan upper bound estimate} can be improved as follow
\begin{align*}
T(\varepsilon)\leqslant C \varepsilon^{-\max\{F(n,p,q),G(n,p,q)\}^{-1}},
\end{align*} where $F(n,p,q)$ and $G(n,p,q)$ are defined by \eqref{def F(n,p,q) function} and \eqref{def G(n,p,q) function}, respectively.
\end{cor}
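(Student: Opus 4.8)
The plan is to re-run the subcritical iteration behind Theorem \ref{Thm blowup iteration}, changing only the starting lower bound for one of the two space averages, which the hypothesis $\int_{\mathbb{R}^2}u_1(x)\,dx\neq0$ lets us strengthen. First I would recall the skeleton of that argument. Choosing in \eqref{def u} and \eqref{def v} a test function that equals $1$ on the backward light cone $\{|x|\leqslant t+R\}$, and using the multiplier $m_j(t)\doteq\exp\!\big(\int_0^t b_j(s)\,ds\big)$ — bounded, $1\leqslant m_j(t)\leqslant e^{\|b_j\|_{L^1}}$, because $b_j\in L^1([0,\infty))$ — the functionals $U(t)\doteq\int_{\mathbb{R}^n}u(t,x)\,dx$ and $V(t)\doteq\int_{\mathbb{R}^n}v(t,x)\,dx$ satisfy $(m_1U')'(t)=m_1(t)\int_{\mathbb{R}^n}|v(t,x)|^p\,dx$ and $(m_2V')'(t)=m_2(t)\int_{\mathbb{R}^n}|u(t,x)|^q\,dx$, so that the damping is essentially absorbed. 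Coupling these with Hölder's inequality and the support property \eqref{support condition solution} (whence $|\supp u(t,\cdot)|,|\supp v(t,\cdot)|\lesssim(t+R)^n$) gives $\int_{\mathbb{R}^n}|u(t,x)|^q\,dx\gtrsim(t+R)^{-n(q-1)}|U(t)|^q$ and $\int_{\mathbb{R}^n}|v(t,x)|^p\,dx\gtrsim(t+R)^{-n(p-1)}|V(t)|^p$, and iterating the resulting coupled inequalities produces \eqref{lifespan upper bound estimate}.

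The only new input is an improved seed. Integrating the first equation of \eqref{weakly coupled system} over $\mathbb{R}^n$ gives $(m_1U')'(t)\geqslant0$, while $U'(0)=\varepsilon\int_{\mathbb{R}^2}u_1(x)\,dx>0$, the integral being nonnegative by hypothesis and nonzero by assumption; hence $m_1(t)U'(t)\geqslant U'(0)$, i.e. $U'(t)\geqslant\varepsilon\,e^{-\|b_1\|_{L^1}}\int_{\mathbb{R}^2}u_1(x)\,dx$ for every $t\geqslant0$. Since $U(0)=\varepsilon\int_{\mathbb{R}^2}u_0(x)\,dx\geqslant0$, one integration yields
\[
U(t)\geqslant C_0\,\varepsilon\,(1+t),\qquad t\geqslant0,
\]
with $C_0=C_0(u_0,u_1,b_1)>0$, which replaces the weaker seed $U(t)\gtrsim\varepsilon$ used in the proof of Theorem \ref{Thm blowup iteration}. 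Inserting it into $\int_{\mathbb{R}^2}|u(t,x)|^q\,dx\gtrsim(t+R)^{-2(q-1)}|U(t)|^q\gtrsim\varepsilon^q(t+R)^{2-q}$ feeds the $v$-equation with an extra factor $(1+t)^q$, and from there the gain circulates around the $pq$-cycle.

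Then I would run the iteration with the asymmetric frame this forces: one chain started from the improved bound for $U$, and one chain started from $V(t)\gtrsim\varepsilon$, the latter arising from $V(0)=\varepsilon\int_{\mathbb{R}^2}v_0(x)\,dx>0$ and admitting no analogous improvement, since no hypothesis is made on $\int_{\mathbb{R}^2}v_1\,dx$. Along each chain one tracks the exponent of $(1+t)$, which obeys an affine recursion with multiplicative factor $pq$, together with the accompanying coefficient, governed by the usual coupled multiplicative recursion, and then locates the threshold $t\sim\varepsilon^{-\sigma}$ beyond which the sequence of lower bounds diverges with the number of cycles, contradicting the finiteness of $U(t)$ for $t<T(\varepsilon)$; this forces $T(\varepsilon)\lesssim\varepsilon^{-\sigma}$. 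On the first chain the extra power of $(1+t)$ replaces the exponent $F(2,q,p)$ of Theorem \ref{Thm blowup iteration} by $G(2,p,q)$; the second chain is unaffected and still yields $F(2,p,q)$; keeping the chain that forces the earlier blow-up gives $T(\varepsilon)\leqslant C\varepsilon^{-1/\max\{F(2,p,q),G(2,p,q)\}}$. The restriction $q\geqslant2$ enters precisely here: it makes the intermediate bound $\varepsilon^q(t+R)^{2-q}$ non-increasing, which is the regime the present functional and iteration are built for (the complementary range $1<q<2$, $n=2$, requiring a different setup and being covered by Corollary \ref{Corollary A} under the symmetric hypotheses $\int_{\mathbb{R}^2}u_1\,dx\neq0$ and $\int_{\mathbb{R}^2}v_1\,dx\neq0$).

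The main obstacle, I expect, is the bookkeeping of this asymmetric iteration: carrying the improved seed through the coupled recursion, solving the affine exponent recursion for its fixed point and identifying exactly which rational function of $p,q$ (with $n=2$) emerges, dealing with the borderline logarithmic sub-cases that appear when an intermediate exponent of $(1+t)$ equals $-1$ (for instance after the first step when $q=3$), and verifying that the seed $U(t)\gtrsim\varepsilon(1+t)$ stays compatible with — and, cycle after cycle, eventually dominated by — the nonlinear lower bounds, so that the gain is not lost along the way and the final optimization over the two chains returns exactly $\max\{F(2,p,q),G(2,p,q)\}$.
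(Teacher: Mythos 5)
Your overall strategy---keep the functionals $U(t)=\int u\,dx$, $V(t)=\int v\,dx$, use the multiplier to absorb the damping, and replace one of the two seeds in the iteration of Section~\ref{Subsection iteration argument subcritical case} by the stronger bound coming from $\int_{\mathbb{R}^2}u_1\,dx\neq0$---is the same as the paper's (Section~\ref{Subsection corollaries low dimensions}). However, there is a concrete gap exactly where you expect to gain. From $U(t)\gtrsim\varepsilon t$ and H\"older you derive
\[
\int_{\mathbb{R}^2}|u(t,x)|^q\,dx\gtrsim \varepsilon^q (1+t)^{-2(q-1)}\,t^q \sim \varepsilon^q\, t^{2-q},
\]
and claim this feeds the $v$-equation with an \emph{extra} factor. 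But the a priori bound \eqref{Priori u^q} already gives $\int|u|^q\,dx\gtrsim \varepsilon^q(1+t)^{1-q/2}$, and one has $2-q\leqslant 1-q/2$ precisely when $q\geqslant 2$, which is the entire range of Corollary~\ref{Corollary B}. So for $q\geqslant2$ the new lower bound is \emph{weaker} than the one already used in Theorem~\ref{Thm blowup iteration}, the ``gain'' does not circulate around the $pq$-cycle through $V$, and the mechanism you describe does not produce a strengthened $V$-seed at all. (Your own last paragraph flags that $q\geqslant2$ ``makes the intermediate bound non-increasing,'' but this is not a feature of the argument --- it is the observation that this route yields no improvement.)

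There is a second, related imprecision: even granting a direct $U$-seed $U(t)\gtrsim\varepsilon t$, i.e.\ $D_1\sim\varepsilon$, $a_1=0$, $b_1=1$, the threshold exponent that the $D_1$-chain yields (using \eqref{explicit epression a j dep a1}, \eqref{explicit epression b j odd dep b1} and the analogue of \eqref{def J(t)}) is $\sigma= b_1-a_1+\tfrac{2(p+1)}{pq-1}-n=\tfrac{2(p+1)}{pq-1}-1$ for $n=2$, and the associated lifespan bound is $\varepsilon^{-1/\sigma}$ since $D_1\sim\varepsilon$. A short computation shows $\sigma=pG(2,p,q)+1\neq G(2,p,q)$, so this seed does not reproduce the exponent announced in the statement of the corollary. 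The paper's own sketch arrives at $G(n,p,q)$ from a seed of the form $D_1\sim\varepsilon^p$, $a_1=p$, $b_1=(n-1)p$, which is obtained by feeding the improved nonlinearity bound (derived from the nonvanishing momentum and the convexity-type argument) \emph{back through the double integrals} \eqref{iter1}--\eqref{iter4} rather than by seeding the iteration directly with $U(t)\gtrsim\varepsilon t$. Without this reroute, neither of your two chains produces the claimed pair $\{F(2,p,q),G(2,p,q)\}$, and the ``bookkeeping'' step you defer is not just technical: it is where the argument fails. To repair the proposal you would need to (i) identify the correct $j=1$ seed for the iteration (matching the recursion's indexing in Section~\ref{Subsection iteration argument subcritical case}), and (ii) explain why the improvement from $\int u_1\neq0$ is still profitable on the $D$-chain despite not improving $\int|u|^q$ for $q\geqslant2$.
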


\begin{cor} \label{Corollary C} Let $n=2$ and $1<q<2$, $p\geqslant 2$. Furthermore, we assume that $(p,q)$ and $(u_0,u_1,v_0,v_1)$ satisfy the same assumptions as in Theorem \ref{Thm blowup iteration}. If $$\int_{\mathbb{R}^2}v_1(x) dx \neq 0,$$ then, the lifespan estimate \eqref{lifespan upper bound estimate} can be improved as follow
\begin{align*}
T(\varepsilon)\leqslant C \varepsilon^{-\max\{F(n,q,p),G(n,q,p)\}^{-1}},
\end{align*} where $F(n,p,q)$ and $G(n,p,q)$ are defined by \eqref{def F(n,p,q) function} and \eqref{def G(n,p,q) function}, respectively.
\end{cor}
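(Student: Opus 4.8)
The plan is to obtain Corollary \ref{Corollary C} from Corollary \ref{Corollary B} by exploiting the invariance of the weakly coupled system \eqref{weakly coupled system} under the exchange of its two components. First I would record the elementary observation that $(u,v)$ is an energy solution of \eqref{weakly coupled system} on $[0,T)$ with exponents $(p,q)$, damping coefficients $(b_1,b_2)$ and Cauchy data $(\varepsilon u_0,\varepsilon u_1,\varepsilon v_0,\varepsilon v_1)$ if and only if the pair $(\widetilde{u},\widetilde{v})\doteq(v,u)$ is an energy solution of \eqref{weakly coupled system} on the same interval with exponents $(\widetilde{p},\widetilde{q})\doteq(q,p)$, damping coefficients $(\widetilde{b}_1,\widetilde{b}_2)\doteq(b_2,b_1)$ and Cauchy data $(\varepsilon v_0,\varepsilon v_1,\varepsilon u_0,\varepsilon u_1)$. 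Indeed, the relabeling simply interchanges the two integral identities \eqref{def u} and \eqref{def v} of Definition \ref{def energ sol intro}, and also interchanges the regularity requirements $u\in L^q_{\loc}$ and $v\in L^p_{\loc}$; in particular the lifespan is unchanged, $\widetilde{T}(\varepsilon)=T(\varepsilon)$.

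Next I would check that, under the hypotheses of Corollary \ref{Corollary C}, the transformed problem satisfies exactly the hypotheses of Corollary \ref{Corollary B}. With $n=2$ fixed, the assumptions $1<q<2$ and $p\geqslant 2$ read $1<\widetilde{p}<2$ and $\widetilde{q}\geqslant 2$; the nonvanishing condition $\int_{\mathbb{R}^2}v_1(x)\,dx\neq 0$ reads $\int_{\mathbb{R}^2}\widetilde{u}_1(x)\,dx\neq 0$; and the remaining standing assumptions inherited from Theorem \ref{Thm blowup iteration} --- nonnegativity, pairwise nontriviality and compact support in $B_R$ of the data, the subcriticality relation \eqref{critical exponent wave like case system} for $(p,q)$, and the support condition \eqref{support condition solution} for the solution --- are all symmetric under the swap $p\leftrightarrow q$, $u\leftrightarrow v$, hence they carry over verbatim to $(\widetilde{p},\widetilde{q})$, $(\widetilde{u}_0,\widetilde{u}_1,\widetilde{v}_0,\widetilde{v}_1)$ and $(\widetilde{u},\widetilde{v})$. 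Applying Corollary \ref{Corollary B} to $(\widetilde{u},\widetilde{v})$ then gives, for all sufficiently small $\varepsilon>0$, the bound $\widetilde{T}(\varepsilon)\leqslant C\varepsilon^{-\max\{F(n,\widetilde{p},\widetilde{q}),\,G(n,\widetilde{p},\widetilde{q})\}^{-1}}=C\varepsilon^{-\max\{F(n,q,p),\,G(n,q,p)\}^{-1}}$; since $\widetilde{T}(\varepsilon)=T(\varepsilon)$, this is precisely the asserted lifespan estimate.

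This argument has essentially no technical obstacle; the only point requiring care is the verification that the notion of energy solution in Definition \ref{def energ sol intro}, as well as every hypothesis appearing in Theorem \ref{Thm blowup iteration} and in Corollary \ref{Corollary B}, is genuinely invariant under the simultaneous exchange $u\leftrightarrow v$, $p\leftrightarrow q$, $b_1\leftrightarrow b_2$, $(u_0,u_1)\leftrightarrow(v_0,v_1)$, which is immediate by inspection. If one prefers an argument not relying on Corollary \ref{Corollary B} as a black box, one can instead rerun its iteration scheme with the two equations interchanged: one derives the improved lower bound for the space average $\int_{\mathbb{R}^2}v(t,x)\,dx$ from the second-order ordinary differential inequality it satisfies after integrating the equation for $v$ over $\mathbb{R}^2$, using $\int_{\mathbb{R}^2}v_1(x)\,dx\neq 0$ and $1<q<2$ (this produces the exponent encoded in $G(2,q,p)$), couples it with the wave-like lower bound for $u$ coming from Theorem \ref{Thm blowup iteration} (which contributes $F(2,q,p)$), and closes the iteration to reach the exponent $\max\{F(2,q,p),G(2,q,p)\}$.
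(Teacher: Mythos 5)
Your symmetry reduction is correct. The observation that the simultaneous exchange $u\leftrightarrow v$, $p\leftrightarrow q$, $b_1\leftrightarrow b_2$, $(u_0,u_1)\leftrightarrow(v_0,v_1)$ maps the system \eqref{weakly coupled system} to itself, preserves Definition \ref{def energ sol intro} and the lifespan, and carries every hypothesis of Theorem \ref{Thm blowup iteration} (all of which are symmetric in the two components) and of Corollary \ref{Corollary B} into the corresponding hypothesis of Corollary \ref{Corollary C}, is indeed immediate by inspection, and the conclusion of Corollary \ref{Corollary B} applied to the swapped pair gives exactly the stated bound with $\max\{F(n,q,p),G(n,q,p)\}$. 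The paper does not phrase it as a black-box reduction to Corollary \ref{Corollary B}; instead, in Section \ref{Subsection corollaries low dimensions} it proves Corollaries \ref{Corollary A}, \ref{Corollary B} and \ref{Corollary C} in one pass by replacing the base case of the iteration on the $U$-branch (resp.\ $V$-branch) whenever $1<p<2$ (resp.\ $1<q<2$) and the relevant first-moment is nonzero, which yields the exponent $G(n,p,q)$ on the $U$-branch and $G(n,q,p)$ on the $V$-branch while the untouched branch still contributes the corresponding $F$. These two routes are logically equivalent and equally short; your version buys a cleaner statement of the reduction and avoids rewriting the iteration, while the paper's version makes the origin of the exponent $G(n,q,p)$ in the argument visible. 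Your closing sketch of the direct iteration (coupling the improved lower bound for $\int_{\mathbb{R}^2}v\,dx$, coming from $\int_{\mathbb{R}^2}v_1\,dx\neq 0$ and $1<q<2$, with the standard wave-type lower bound on the $u$-side) is precisely the paper's argument specialised to this corollary, so either form would be accepted.
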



In the critical case we have the following result. 


\begin{thm}\label{Thm critical case}
Let $b_1,b_2\in \mathcal{C}^1([0,\infty))\cap L^1([0,\infty))$ be nonnegative functions and let $n\geqslant 2$.   Let us consider $p,q>1$ satisfying
\begin{align}\label{critical exponent wave like case system, critical case}
\max\left\{\frac{p+2+q^{-1}}{pq-1},\frac{q+2+p^{-1}}{pq-1}\right\}=\frac{n-1}{2}. 
\end{align} 
Assume that $u_0,v_0\in H^1(\mathbb{R}^n)$ and $u_1,v_1\in  L^2(\mathbb{R}^n)$ are nonnegative, pairwise nontrivial and compactly supported in $B_R$. 

Let $(u,v)$ be an energy solution of \eqref{weakly coupled system} with lifespan $T=T(\varepsilon)$ that satisfies \eqref{support condition solution}. Then, there exists a positive constant $\varepsilon_0=\varepsilon_0(u_0,u_1,v_0,v_1,n,p,q,b_1,b_2,R)$ such that for any $\varepsilon\in (0,\varepsilon_0]$ the solution $(u,v)$ blows up in finite time. Moreover,
 the upper bound estimates for the lifespan
\begin{align} \label{lifespan upper bound estimate, critical case}
T(\varepsilon)\leqslant 
\begin{cases} 
\exp \big(C \varepsilon ^{-\min\{q(pq-1),p(pq-1)\}}\big) & \mbox{if} \ \ p\neq q, \\
\exp \big(C \varepsilon ^{-p(p-1)}\big) & \mbox{if} \ \ p=q
\end{cases}
\end{align} hold, where C is an independent of $\varepsilon$, positive constant and $F=F(n,p,q)$ is defined by \eqref{def F(n,p,q) function}.
\end{thm}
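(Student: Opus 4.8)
\emph{Proof plan.} The proof adapts to the weakly coupled system the slicing-method blow-up argument of Wakasa--Yordanov for the single critical semilinear damped wave equation in the scattering case, the new ingredient being an \emph{asymmetric} iteration scheme: on the critical curve \eqref{critical exponent wave like case system, critical case}, unless $p=q$, exactly one among $F(n,p,q)$ and $F(n,q,p)$ vanishes while the other is strictly negative, so only one of the two functionals attached to $u$ and $v$ sits exactly at the critical threshold (for $p=q$ everything is symmetric and one is back to the single-equation situation). As a preliminary reduction, the damping is neutralized: since $b_1,b_2\in L^1([0,\infty))$, the multipliers $m_i(t):=\exp\!\big(\int_0^t b_i(\tau)\,d\tau\big)$ satisfy $1\le m_i(t)\le e^{\|b_i\|_{L^1}}$, and one checks that $\partial_s^2(m_i\Theta)-\Delta(m_i\Theta)-\partial_s(b_i\,m_i\Theta)=m_i(\Theta_{ss}-\Delta\Theta+b_i\Theta_s)$ for any smooth $\Theta$; hence testing \eqref{def u weak}, \eqref{def v weak} against functions $m_i(s)\Theta(s,x)$ reduces the weak identities, up to the uniformly bounded factors $m_i$ and a time-integrable lower order term, to those of the undamped weakly coupled wave system \eqref{weakly coupled system wave eq}, as in \cite{LT18Scatt}. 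In particular the space averages of $u$ and $v$ obey wave-type lower bounds and, by the nonnegativity and pairwise nontriviality of the data, are positive for $t>0$.

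Next I would introduce the weighted functionals. Let $\phi=\phi(x)$ be the Yordanov--Zhang eigenfunction, $\Delta\phi=\phi$, $\phi(x)\sim c_n|x|^{-(n-1)/2}e^{|x|}$ as $|x|\to\infty$. Following Wakasa--Yordanov, one uses as weights a pair of auxiliary functions obtained from $\phi$ and $e^{-t}\phi(x)$ by inserting a logarithmic correction near the light cone $|x|\sim t$, with a parameter chosen differently in the functionals for $u$ and for $v$ so as to reflect the asymmetry of the critical curve, and one sets
\[
\mathcal{U}(t):=\int_{\mathbb{R}^n}u(t,x)\,\xi(t,x)\,dx,\qquad \mathcal{V}(t):=\int_{\mathbb{R}^n}v(t,x)\,\xi(t,x)\,dx,
\]
with $\xi$ a (shifted copy of the) auxiliary weight. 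Discarding nonnegative contributions in the weak identities and applying Hölder's inequality over the ball $\{|x|\le\sigma+R\}$ -- using $\int_{|x|\le\sigma+R}e^{-\sigma}\phi(x)\,dx\sim(1+\sigma)^{(n-1)/2}$ and sharp two-sided bounds for the logarithmic weight -- turns $\int|v|^p\xi\,dx$ into a quantity bounded from below by $\mathcal{V}(\sigma)^p$ times a power of $(1+\sigma)$ and an inverse power of $\log(1+\sigma)$, and symmetrically for $\int|u|^q\xi\,dx$. Condition \eqref{critical exponent wave like case system, critical case} is exactly what makes the powers of $(1+\sigma)$ produced by the two-step recursion $\mathcal{U}\to\mathcal{V}^p\to\mathcal{U}^{pq}$ cancel, so that only a logarithmic deficit survives -- which is why the plain iteration has to be refined through the slicing method.

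The heart of the argument is then the slicing iteration. Fixing an increasing sequence $\ell_j\uparrow 2$ (for instance $\ell_j=2-2^{-j}$), one constructs inductively, on the family of cones $\{|x|\le t+R,\ t\ge\ell_j\}$, functionals $\mathcal{U}_j$, $\mathcal{V}_j$ with lower bounds of the type $\mathcal{U}_j(t)\ge D_j\big(\log(t/\ell_j)\big)^{a_j}$ and analogously for $\mathcal{V}_j$, the constants $D_j$ and exponents $a_j$ being generated by the two-step recursion through $(p,q)$. The asymmetry enters the bookkeeping: the non-critical of the two chains carries a genuine positive power of $t$, which can be traded for logarithmic factors, so that $a_j\to\infty$ while the $D_j$ do not decay too fast; letting $j\to\infty$, the lower bounds force $\mathcal{U}(t)=+\infty$ once $t$ exceeds an explicit $\varepsilon$-dependent value, which is incompatible with the finiteness of $\mathcal{U}$ for an energy solution on $[0,T)$, whence blow-up. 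Tracking the dependence of $D_j$ on $\varepsilon$ through the recursion and optimizing over $j$ shows that this value is $\le\exp(C\varepsilon^{-p(pq-1)})$ along one chain and $\le\exp(C\varepsilon^{-q(pq-1)})$ along the other, which yields \eqref{lifespan upper bound estimate, critical case}; for $p=q$ the two chains merge, the scheme becomes symmetric, and the single-equation exponent $p(p-1)$ is recovered.

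The main obstacle is the asymmetric logarithmic bookkeeping when $p\ne q$: one has to arrange the slicing so that the inverse powers of $\log$ produced by the critical Hölder step accumulate (forcing $a_j\to\infty$) while the surplus powers of $t$ coming from the subcritical chain are channelled correctly into that accumulation, and the constants $D_j$ must stay large enough both for the iteration to diverge and for the $\varepsilon$-optimization to return precisely $\min\{p(pq-1),q(pq-1)\}$. The other technically demanding point is the construction of the logarithmic auxiliary weight with the right parameter in each of the two equations and the verification of the sharp pointwise two-sided bounds that make the powers of $(1+\sigma)$ line up on the critical curve.
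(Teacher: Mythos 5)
Your plan identifies the right general strategy (Wakasa--Yordanov auxiliary functions, slicing iteration, asymmetric parameters for the two functionals, and the $p=q$ vs $p\neq q$ dichotomy), but the ``neutralize the damping via the multiplier'' step, as you have written it, does not do what you claim. The identity
\[
\partial_s^2(m_i\Theta)-\Delta(m_i\Theta)-\partial_s\!\big(b_i\,m_i\Theta\big)=m_i\big(\Theta_{ss}-\Delta\Theta+b_i\Theta_s\big)
\]
is correct, but the right-hand side is the \emph{forward damped} wave operator applied to $\Theta$, not the undamped one: the $b_i\Theta_s$ term is still there. So testing \eqref{def u weak}, \eqref{def v weak} against $m_i\Theta$ does not reduce the critical problem to the undamped system; you would need $\Theta$ to solve a damping-dependent equation, and there is no uniform-in-$t$ cancellation of that term merely from $b_i\in L^1$. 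The paper's actual mechanism is different: the test functions are $\Phi(\lambda x)\,y_2(t,s;\lambda,b_j)$, where $y_2$ is one element of the fundamental system of the ODE $y''+b_j y'-\lambda^2 y=0$ (so the damping enters the construction of the test function, not as a prefactor), and the crucial point -- Lemma~\ref{lemma y1,y2} -- is that $y_1,y_2$ can be bounded \emph{from below} by the undamped $\cosh\lambda(t-s)$ and $\sinh\lambda(t-s)/\lambda$ at the cost of the fixed multiplicative losses $e^{-\|b\|_{L^1}}$, $e^{-2\|b\|_{L^1}}$. That lower bound, integrated against $e^{-\lambda(t+R)}\lambda^{r_j}\,d\lambda$ on $[0,\lambda_0]$, is what produces the functionals $\mathcal{U}(t)=\int u(t,x)\,\eta_{r_1}(t,t,x)\,dx$ and $\mathcal{V}(t)=\int v(t,x)\,\eta_{r_2}(t,t,x)\,dx$ and the coupled integral inequalities of Proposition~\ref{prop integral ineq critic case}. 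This is also where your sketch becomes internally inconsistent: you announce in prose that the weight parameter is chosen differently for $u$ and $v$, but then write $\mathcal{U}$ and $\mathcal{V}$ with the same weight $\xi$; the paper's argument hinges on using two different parameters $r_1=\tfrac{n-1}{2}-\tfrac1q$ and $r_2$ (strict inequality $r_2>\tfrac{n-1}{2}-\tfrac1p$ when $p>q$, equality when $p=q$), which is precisely what makes one step of the two-step cycle produce a genuine $(\log\langle s\rangle)^{-(q-1)}$ weight (the critical Hölder step) while the other step carries a surplus positive power of $\langle s\rangle$ and contributes no logarithm.

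A second, smaller gap: to close the lifespan estimate you need the \emph{starting} lower bound $\mathcal{U}(t)\gtrsim\varepsilon^{pq}\log(2t/3)$ for $p>q$ and $\mathcal{U}(t)\gtrsim\varepsilon^{p}\log(2t/3)$ for $p=q$ (Lemma~\ref{lower bound functionals log terms crit}); the different $\varepsilon$-powers here are exactly what turns into the exponents $q(pq-1)$ resp.\ $p(p-1)$ after the slicing iteration. Your plan does not record this distinction, and without it the claimed optimization ``along one chain vs.\ the other'' does not produce $\min\{q(pq-1),p(pq-1)\}$. Finally, the description of the auxiliary functions as ``obtained from $\phi$ and $e^{-t}\phi(x)$ by inserting a logarithmic correction near the light cone'' does not match the actual construction: the logarithm is not built into the weight; it emerges only when integrating $\langle s-|x|\rangle^{-1}$ over $B_{s+R}$ in the Hölder step, which happens precisely because $r_1$ is tuned so that $(\tfrac{n-3}{2}-r_1)q'=-1$.
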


\begin{rem} The upper bound estimates \eqref{lifespan upper bound estimate} and \eqref{lifespan upper bound estimate, critical case} for the lifespan coincide with the sharp estimates for the lifespan of local solutions to the weakly coupled system of semilinear wave equations with power nonlinearities. However, as we do not deal with global in time existence results for \eqref{weakly coupled system} in the present work, we do not derive a lower bound estimate for $T(\varepsilon)$.
\end{rem}

\begin{rem} Let us point out explicitly that in the critical case we need to require more regularity for the time-dependent coefficients $b_1,b_2$ in comparison to the subcritical case. Namely, $b_1,b_2$ are assumed of class $\mathcal{C}^1$ rather than being merely continuous. The reason of this stronger assumption is that in the critical case we shall employ \eqref{def u weak}-\eqref{def v weak} in place of \eqref{def u}-\eqref{def v}, in order to find the coupled system of ordinary integral inequalities for suitable functionals, whose dynamic is studied to prove the blow-up result.
\end{rem}

In this paper we study the nonexistence of global in time solutions for a semilinear weakly coupled system of damped wave equations in the scattering producing case with power nonlinearities and the corresponding upper bound for the lifespan in the same range of powers $(p,q)$ as for the analogous system without damping terms. In two forthcoming papers \cite{PalTak19der,PalTak19mix} we will consider as well the case with nonlinearities of derivative type and of mixed type 
 for a semilinear weakly coupled system of damped wave equations in the scattering case.

The remaining part of this paper is organized as follows: in Section \ref{Section lower bounds} we recall a multiplier, that has been introduced in \cite{LT18Scatt} in order to study the corresponding single semilinear equation, and its properties and we derive some lower bounds for certain functionals related to a local solution; then, in Section \ref{Section proof main thm} we prove Theorem \ref{Thm blowup iteration} by using the preparatory results from Section \ref{Section lower bounds} and an iterative method. Finally, in Section \ref{Section critical case} we prove the result in the critical case adapting the approach from \cite{WakYor18,WakYor18damp} for a weakly coupled system. In particular, the slicing method is employed in order to deal with logarithmic factors in the iteration argument.

\subsection*{Notations} Throughout this paper we will use the following notations: $B_R$ denotes the ball around the origin with radius $R$; $f \lesssim g$ means that there exists a positive constant $C$ such that $f \leqslant Cg$ and, similarly, for $f\gtrsim g$; 
finally, as in the introduction, $p_0(n)$ denotes the Strauss exponent.

\section{Definition of the multipliers and lower bounds of the functionals}
\label{Section lower bounds}

The arguments used in this section are similar to some of those employed in \cite[Section 3]{LT18Scatt}. However, for the sake of self-containedness and readability of the paper, we will provide them.
 
\begin{defn} Let $b_1,b_2\in \mathcal{C}([0,\infty))\cap L^1([0,\infty))$ be the nonnegative, time-dependent coefficients in \eqref{weakly coupled system}. We define the corresponding \emph{multipliers}
\begin{align*}
m_j(t)\doteq \exp \bigg(-\int_t^\infty b_j(\tau) d\tau \bigg) \qquad \mbox{for} \ \  t\geqslant 0 \ \ \mbox{and} \ j=1,2. 
\end{align*}
\end{defn}

Since $b_1,b_2$ are nonnegative functions, it follows that $m_1,m_2$ are increasing functions. Moreover, due to the fact that these coefficients are summable, we get also that these multipliers are bounded and 
\begin{align}
m_j(0)\leqslant m_j(t) \leqslant 1 \qquad \mbox{for} \ \  t\geqslant 0 \ \ \mbox{and} \ j=1,2. \label{boundedness multipliers}
\end{align}
A fundamental property of these multipliers is the relation with the corresponding derivatives. More precisely,
\begin{align}
m_j'(t) = b_j(t) \, m(t)  \qquad \mbox{for} \ \   j=1,2. \label{deribative multiplier}
\end{align} Such a relation will play a fundamental role in the remaining part of this section, which  is devoted to the determination of lower bounds for the spatial integral of the nonlinear terms and to the deduction of a pair of coupled integral inequalities for the spatial averages of the components of a local solution to \eqref{weakly coupled system}.
\begin{lem} \label{lemma lower bound nonlinearities}
Let us assume that $u_0, u_1,  v_0,  v_1$ are nonnegative, pairwise nontrivial and compactly supported in $B_R$ for some $R>0$. Let $(u,v)$ be a local (in time) energy solution to \eqref{weakly coupled system} satisfying \eqref{support condition solution}. Then,
there exist two constants $C_1=C_1(u_0,u_1,b_1,q,R)>0$ and $K_1=K_1(v_0,v_1,b_2,p,R)>0$, independent of $\varepsilon$ and $t$, such that for any $t\geqslant 0$ and $p,q>1$, the following estimates hold:
\begin{align}\label{Priori u^q}
\int_{\mathbb{R}^n}|u(t,x)|^q dx & \geqslant C_1\varepsilon^q(1+t)^{n-1-\frac{n-1}2 q}, \\
\label{Priori v^p}
\int_{\mathbb{R}^n}|v(t,x)|^p dx & \geqslant K_1\varepsilon^p(1+t)^{n-1-\frac{n-1}2 p}.
\end{align}
\end{lem}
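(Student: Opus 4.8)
The plan is to adapt the Yordanov--Zhang / Lai--Takamura scheme, controlling the spatial averages of $u$ and $v$ weighted by the auxiliary function $\varphi_1(x)\doteq\int_{\mathbb{S}^{n-1}}e^{x\cdot\omega}\,d\sigma_\omega$. Recall that $\varphi_1$ is smooth and positive, radially symmetric and increasing in $|x|$, solves $\Delta\varphi_1=\varphi_1$, and satisfies $\varphi_1(x)\leqslant C_n(1+|x|)^{-\frac{n-1}{2}}e^{|x|}$ together with $\int_{|x|\leqslant\rho}\varphi_1(x)\,dx\leqslant C_n\rho^{\frac{n-1}{2}}e^{\rho}$ for $\rho\geqslant1$. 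Set
\begin{align*}
G_1(t)\doteq\int_{\mathbb{R}^n}u(t,x)\varphi_1(x)\,dx,\qquad G_2(t)\doteq\int_{\mathbb{R}^n}v(t,x)\varphi_1(x)\,dx,
\end{align*}
which are finite by the support condition \eqref{support condition solution}. Using the definition of energy solution (Definition~\ref{def energ sol intro}) with the test function $\varphi_1$ multiplied by a spatial cut-off equal to $1$ on $B_{t+R}$, and moving the Laplacian onto $\varphi_1$, one obtains that $G_1,G_2$ solve (the second-order equations being understood in their integrated form) the coupled system
\begin{align*}
G_1''+b_1G_1'-G_1=\int_{\mathbb{R}^n}|v(t,x)|^p\varphi_1(x)\,dx\geqslant0,\qquad G_2''+b_2G_2'-G_2=\int_{\mathbb{R}^n}|u(t,x)|^q\varphi_1(x)\,dx\geqslant0,
\end{align*}
with $G_1(0)=\varepsilon\int u_0\varphi_1\geqslant0$, $G_1'(0)=\varepsilon\int u_1\varphi_1\geqslant0$ and analogously for $G_2$; moreover $\int(u_0+u_1)\varphi_1>0$ and $\int(v_0+v_1)\varphi_1>0$ by the nontriviality assumption on the data.

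First I would check that $G_1,G_2\geqslant0$ and $G_1',G_2'\geqslant0$ on $[0,T)$: multiplying the $G_1$-equation by $m_1$ and using \eqref{deribative multiplier} gives $(m_1G_1')'=m_1G_1+m_1\int|v|^p\varphi_1$, so a standard continuity argument closes this (as long as $G_1\geqslant0$ the right-hand side is $\geqslant0$, hence $m_1G_1'$ is nondecreasing, hence $m_1G_1'\geqslant m_1(0)G_1'(0)\geqslant0$, hence $G_1'\geqslant0$ and $G_1\geqslant G_1(0)\geqslant0$), and symmetrically for $G_2$. The crucial step is then the \emph{sharp} exponential lower bound $G_1(t)\gtrsim\varepsilon e^{t}$ for large $t$: this has to be obtained in spite of the damping — a naive use of $(m_1G_1')'\geqslant m_1G_1$ only yields the rate $e^{\sqrt{m_1(0)}\,t}$, which is too weak for the step that follows. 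For this I would introduce the auxiliary functional $Y_1(t)\doteq m_1(t)\big(G_1(t)+G_1'(t)\big)$ and compute, using $m_1'=b_1m_1$ and the $G_1$-equation,
\begin{align*}
Y_1'=Y_1+m_1b_1G_1+m_1\!\int_{\mathbb{R}^n}|v(t,x)|^p\varphi_1(x)\,dx\geqslant Y_1 .
\end{align*}
Hence $Y_1(t)\geqslant Y_1(0)e^{t}=m_1(0)\varepsilon\big(\int(u_0+u_1)\varphi_1\big)e^{t}$, and since $m_1\leqslant1$ this forces $G_1+G_1'\geqslant Y_1(0)e^{t}$; integrating $(e^{t}G_1)'\geqslant Y_1(0)e^{2t}$ and using $G_1(0)\geqslant0$ gives $G_1(t)\geqslant Y_1(0)\sinh t\geqslant c_1\varepsilon e^{t}$ for $t\geqslant1$, with $c_1=c_1(u_0,u_1,b_1)>0$. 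The analogous functional $Y_2=m_2(G_2+G_2')$ gives $G_2(t)\geqslant c_2\varepsilon e^{t}$ for $t\geqslant1$.

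It remains to pass from these weighted averages to \eqref{Priori u^q}--\eqref{Priori v^p} via Hölder's inequality. Splitting $\varphi_1=\varphi_1^{1/q}\varphi_1^{1-1/q}$ and using \eqref{support condition solution},
\begin{align*}
G_1(t)\leqslant\Big(\int_{\mathbb{R}^n}|u(t,x)|^q\varphi_1(x)\,dx\Big)^{1/q}\Big(\int_{|x|\leqslant t+R}\varphi_1(x)\,dx\Big)^{1-1/q},
\end{align*}
so the bound on $\int_{B_{t+R}}\varphi_1$ and $G_1(t)\geqslant c_1\varepsilon e^{t}$ give $\int_{\mathbb{R}^n}|u(t,x)|^q\varphi_1(x)\,dx\gtrsim\varepsilon^q e^{t}(1+t)^{-\frac{n-1}{2}(q-1)}$ for $t\geqslant1$. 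On the other hand $\varphi_1$ is radially increasing, so on $\supp u(t,\cdot)\subset B_{t+R}$ we have $\varphi_1(x)\leqslant\sup_{|y|\leqslant t+R}\varphi_1(y)\leqslant C_R(1+t)^{-\frac{n-1}{2}}e^{t}$, whence $\int|u|^q\varphi_1\leqslant C_R(1+t)^{-\frac{n-1}{2}}e^{t}\int|u|^q$, and dividing,
\begin{align*}
\int_{\mathbb{R}^n}|u(t,x)|^q\,dx\gtrsim\varepsilon^q(1+t)^{\frac{n-1}{2}-\frac{n-1}{2}(q-1)}=\varepsilon^q(1+t)^{\,n-1-\frac{n-1}{2}q}\qquad(t\geqslant1),
\end{align*}
which is \eqref{Priori u^q}; \eqref{Priori v^p} follows by the symmetric argument from $G_2$. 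For $t\in[0,1]$ the estimate is elementary, since there $(1+t)^{n-1-\frac{n-1}{2}q}$ is comparable to $1$ while $t\mapsto\int|u(t,x)|^q\,dx$ is continuous and, by the nontriviality hypothesis, bounded below by a multiple of $\varepsilon^q$. I expect the only real obstacle to be the sharp exponential-rate step; the rest is the standard Yordanov--Zhang machinery, the summability of $b_1,b_2$ entering merely through the positivity of $m_1(0),m_2(0)$.
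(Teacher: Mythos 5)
Your proof is correct and, modulo notation, it is essentially the argument of the paper recast in ``undiscounted'' variables. The paper works with $U_1(t)=\int u(t,x)\,e^{-t}\Phi(x)\,dx=e^{-t}G_1(t)$, so that $m_1(U_1'+2U_1)=e^{-t}m_1(G_1+G_1')=e^{-t}Y_1(t)$; your differential inequality $Y_1'\geqslant Y_1$ is precisely the differential form of the paper's integral inequality \eqref{comparison} (the nonnegative term you drop, $m_1\int|v|^p\varphi_1$, is the term the paper drops), and your sharp bound $G_1(t)\gtrsim\varepsilon e^t$ is equivalent to the paper's $U_1(t)\gtrsim\varepsilon$ in \eqref{num}. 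The positivity bootstrap through $(m_1G_1')'\geqslant 0$ is a somewhat cleaner packaging of the paper's comparison argument, and your two-step H\"older (weighted H\"older followed by the $\sup$ of $\varphi_1$ on the light cone) yields the same exponent as the paper's direct H\"older together with the bound on $\int_{B_{t+R}}\Psi^{q'}$. One small caveat worth flagging, which the paper's own proof shares: your treatment of $t\in[0,1]$ asserts that $\int|u(t,x)|^q\,dx$ is bounded below by a multiple of $\varepsilon^q$ there by the nontriviality hypothesis, but if $u_0\equiv 0$ (and only $u_1\not\equiv 0$) this quantity vanishes at $t=0$, so \eqref{Priori u^q} cannot hold with a uniform constant at $t=0$ --- the paper's passage from \eqref{num} to ``$\gtrsim\varepsilon$'' has the same blemish when $U_1(0)=0$. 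This is harmless in context because the lemma is only ever used under time integrals, but a clean statement should either restrict to $t\geqslant t_0>0$ or accept a constant degenerating as $t\to 0$ in that case.
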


\begin{proof} 
Let us define the functionals
\begin{align*}
U_1(t)\doteq \int_{\mathbb{R}^n}u(t,x)\Psi(t,x)\, dx  \qquad  \mbox{and} \qquad V_1(t)\doteq \int_{\mathbb{R}^n}v(t,x)\Psi(t,x)\, dx 
\end{align*}
where $\Psi=\Psi(t,x)\doteq e^{-t} \Phi(x)$ and 
\begin{align}
\Phi=\Phi(x)\doteq \begin{cases} e^{x}+e^{-x} & \mbox{for} \ \ n=1, \\\displaystyle{\int_{\mathbb{S}^{n-1}} \, e^{\omega \cdot x}\, dS_{\omega}} & \mbox{for} \ \ n\geqslant 2\end{cases} \label{def eigenfunction laplace op}
\end{align} is an eigenfunction of the Laplace operator, as $\Delta \Phi =\Phi$. Then, by H\"{o}lder inequality, we have
\begin{align}\int_{\mathbb{R}^n}|u(t,x)|^qdx\geqslant |U_1(t)|^q\bigg(\int_{|x|\leqslant t+R}\Psi^{q'}(t,x)dx\bigg)^{-(q-1)}\ ,\label{factor}\\
\int_{\mathbb{R}^n}|v(t,x)|^pdx\geqslant |V_1(t)|^p\bigg(\int_{|x|\leqslant t+R}\Psi^{p'}(t,x)dx\bigg)^{-(p-1)}\ , \notag 
\end{align} where $p',q'$ denote the conjugate exponents of $p,q$, respectively. We will prove now \eqref{Priori u^q} by using \eqref{factor}, the proof of \eqref{Priori v^p} being analogous.
The next steps consist in determining a lower bound for $U_1(t)$ and an upper bound for the integral $\int_{|x|\leqslant t+R}\Psi^{q'}(t,x)dx$,  respectively.

Due to the support property for $u$, we can apply the definition of energy solution with test functions that are not compactly supported. Applying the definition of energy solution with $\Psi$ as test function and differentiating with respect to $t$ the obtained relation, we find for any $t\in (0,T)$ 
\begin{align*}
 & \frac{d}{dt}  \int_{\mathbb{R}^n} u_t(t,x)\Psi(t,x) \, dx + \int_{\mathbb{R}^n}\Big(-u_t(t,x)\Psi_t(t,x)+\nabla u(t,x)\cdot  \nabla \Psi(t,x)+b_1(t)u_t(t,x) \Psi(t,x) \Big)dx  	\\ & \quad = \int_{\mathbb{R}^n}|v(t,x)|^p\Psi(t,x) \,dx. 
\end{align*}  
Rearranging the previous relation, we get 
\begin{align*}
\int_{\mathbb{R}^n}|v(t,x)|^p\Psi(t,x) dx & =  \frac{d}{dt}  \int_{\mathbb{R}^n} u_t(t,x)\Psi(t,x) \, dx - \int_{\mathbb{R}^n}u_t(t,x)\Psi_t(t,x)\, dx \\ & \quad - \int_{\mathbb{R}^n} u(t,x)\, \Delta \Psi(t,x)\, dx + b_1(t) \int_{\mathbb{R}^n} u_t(t,x) \Psi(t,x) \,dx \\
& =  \frac{d}{dt}  \int_{\mathbb{R}^n} u_t(t,x)\Psi(t,x) \, dx + b_1(t) \int_{\mathbb{R}^n} u_t(t,x) \Psi(t,x) \,dx  \\ & \quad + \int_{\mathbb{R}^n}\big(u_t(t,x) \Psi(t,x)  -u(t,x)\Psi(t,x)\big) dx,
\end{align*}
where in the last step we used the properties $\Psi_t=-\Psi$ and $\Delta \Psi= \Psi$.
 Multiplying both sides of the previous relation by the multiplier $m_1$ and employing \eqref{deribative multiplier}, we obtain 
 \begin{align*}
m_1(t) \int_{\mathbb{R}^n} |v(t,x)|^p\Psi(t,x)\,  dx & =  \frac{d}{dt} \bigg(m_1(t) \int_{\mathbb{R}^n} u_t(t,x)\Psi(t,x) \, dx \bigg) \\ & \quad + m_1(t)\int_{\mathbb{R}^n}\big(u_t(t,x) \Psi(t,x)  -u(t,x)\Psi(t,x)\big) dx.
 \end{align*} 
Integrating the last equality over $[0,t]$, we find 
\begin{align*}
\int_0^t m_1(s) \int_{\mathbb{R}^n} |v(s,x)|^p\Psi(s,x) \, dx\,  ds & =  m_1(t) \int_{\mathbb{R}^n} u_t(t,x)\Psi(t,x) \, dx -\varepsilon \, m_1(0) \int_{\mathbb{R}^n} u_1(x)\, \Phi(x) \, dx \\ & \quad + \int_0^t m_1(s)\int_{\mathbb{R}^n}\big(u_s(s,x) \Psi(s,x)  -u(s,x)\Psi(s,x)\big) dx \, ds.
\end{align*}
Noticing that
\begin{align*}
 \int_0^t   m_1(s) \int_{\mathbb{R}^n} u_s(s,x) & \Psi(s,x) \,  dx \, ds  \\ & = 
m_1(t)\int_{\mathbb{R}^n} u(t,x) \,\Psi(t,x) \,  dx - \varepsilon \, 
m_1(0)\int_{\mathbb{R}^n} u_0(x) \,\Phi(x) \,  dx \\ & \quad - \int_0^t \int_{\mathbb{R}^n} u(s,x)  \Big(m_1'(s) \Psi(s,x)+m_1(s) \Psi_s(s,x)\Big)  dx \, ds \\
& = 
m_1(t)\int_{\mathbb{R}^n} u(t,x) \Psi(t,x) \,  dx - \varepsilon \, 
m_1(0)\int_{\mathbb{R}^n} u_0(x)\, \Phi(x) \,  dx \\  & \quad - \int_0^t \int_{\mathbb{R}^n} u(s,x) \, b_1(s)\,  m_1(s) \Psi(s,x) \,  dx \, ds +\int_0^t  m_1(s) \int_{\mathbb{R}^n} u(s,x)   \Psi(s,x) \,  dx \, ds,
\end{align*} it follows
\begin{align*}
\int_0^t & m_1(s) \int_{\mathbb{R}^n} |v(s,x)|^p\Psi(s,x) \, dx\,  ds  + \int_0^t b_1(s)\,  m_1(s) \int_{\mathbb{R}^n} u(s,x) \,  \Psi(s,x) \,  dx \, ds  \\ & \quad \quad + \varepsilon \, m_1(0) \int_{\mathbb{R}^n} \big(u_0(x)+u_1(x)\big) \Phi(x) \, dx   \\ & \quad  =  m_1(t) \int_{\mathbb{R}^n} \big( u_t(t,x)\Psi(t,x) +u(t,x) \Psi(t,x)\big) \, dx  
\\ & \quad  =  m_1(t) \, \frac{d}{dt} \int_{\mathbb{R}^n}  u(t,x)\Psi(t,x) \, dx  + 2m_1(t) \int_{\mathbb{R}^n} u(t,x) \Psi(t,x) \, dx.  
\end{align*}
Using the definition of the functional $U_1$, from the previous relation we derive the inequality
\begin{align*}
m_1(t) \big(U'_1(t)+2 U_1(t)\big) \geqslant \varepsilon \, m_1(0) \, C(u_0,u_1) + \int_0^t b_1(s)\,  m_1(s) \, U_1(s) \, ds ,
\end{align*} where $C(u_0,u_1)\doteq  \int_{\mathbb{R}^n} \big(u_0(x)+u_1(x)\big) \Phi(x) \, dx $. Using the boundedness of the multiplier $m_1$, we get
\begin{align}
U'_1(t)+2 U_1(t)  &\geqslant \varepsilon \, \frac{m_1(0)}{m_1(t)} \, C(u_0,u_1) +\frac{1}{m(t)} \int_0^t b_1(s)\,  m_1(s)\, U_1(s) \, ds \notag\\
&\geqslant \varepsilon \, m_1(0) \, C(u_0,u_1) +\frac{1}{m(t)} \int_0^t b_1(s)\,  m_1(s)\,  U_1(s) \, ds. \label{diff inequality U1}
\end{align} A multiplication of both sides in the last estimate by $e^{2t}$ and an integration over $[0,t]$ yield
\begin{align}
e^{2t}U_1(t)\geqslant U_1(0)+ \varepsilon \, \frac{m_1(0)}{2} \, C(u_0,u_1) (e^{2t}-1)+\int_0^t  \frac{e^{2s}}{m(s)} \int_0^\tau b_1(\tau)\,  m_1(\tau) \, U_1(\tau) \, d\tau  \, ds. \label{comparison}
\end{align}
A comparison argument proves the positiveness of the functional $U_1$. 
Due to the fact that initial data are pairwise nontrivial, at least one among $u_0,u_1$ is not identically $0$.
In the first case $u_0\not \equiv 0$, since $u_0\geqslant 0$ implies $U_1(0)>0$, by continuity it holds $U_1(t)>0$ at least in a right neighborhood of $t=0$. If $t_0>0$ was the smallest value such that $U_1(t_0)=0$, then, evaluation of \eqref{comparison} in $t=t_0$ would provide a contradiction. In the second case $u_0\equiv 0$ and $u_1\not \equiv 0$, we can employ \eqref{diff inequality U1} to get a contradiction. Indeed, in this case we have $U_1(0)=0$ and $U_1'(0)=\varepsilon \int_{\mathbb{R}^n} u_1(x)\Phi(x)\, dx >0$. By continuity, $U_1'(t)>0$ for any $t\in [0,t_1)$ with $t_1>0$. Therefore, $U_1$ is strictly increasing, and then positive, in $(0,t_1)$. Let us assume by contradiction that $t_2>t_1$ is the smallest value such that $U_1(t_2)=0$. Consequently, $U_1'(t_2)\leqslant 0$ (if $U_1'(t_2)$ was positive, then, $U_1$ would be strictly increasing in a neighborhood of $t_2$, but this would contradict the definition of $t_2$, since there would be a smaller zero, $U_1$ being negative in a left neighborhood of $t_2$). If we plug $U_1(t_2)=0, U_1'(t_2)\leqslant 0$ and $U_1(t)>0$ for $t\in (0,t_2)$ in \eqref{diff inequality U1}, we find the contradiction we were looking for.

 In particular, due to the fact that $U_1$ is positive, \eqref{comparison} implies 
\begin{align}
U_1(t)\geqslant e^{-2t} U_1(0)+ \varepsilon \, \frac{m_1(0)}{2} \, C(u_0,u_1) (1-e^{-2t}) \gtrsim \varepsilon. \label{num}
\end{align}

 The integral involving $\Psi^{q'}$ in the right-hand side of \eqref{factor} can be estimated in a standard way (cf. estimate (2.5) in \cite{YZ06}), namely,
\begin{align}
\int_{|x|\leqslant t+R}\Psi^{q'}(t,x)\, dx & \leqslant e^{-\frac{q}{q-1} t}\int_{|x|\leqslant t+R}\Phi^{q'}(x)\, dx \leqslant C_{\Phi,R}\, (1+t)^{n-1-\frac{n-1}{2}\frac{q}{q-1}},\label{deno}
\end{align}
where $C_{\Phi,R}$ is a suitable positive constant.
Combing the estimate \eqref{num}, \eqref{deno} and \eqref{factor}, we find \eqref{Priori u^q}. This concludes the proof.
\end{proof}

\begin{rem} As we have already mentioned the proof of Lemma \ref{lemma lower bound nonlinearities} follows the approach from Section 3 in \cite{LT18Scatt}. However, the same estimates can be proved by following the proof of Lemma 5.1 in \cite{WakYor18}, by working with a different functional in place of $U_1$.
\end{rem}

\section{Subcritical case: Proof of Theorem \ref{Thm blowup iteration}} \label{Section proof main thm}

Let us consider a local solution $(u,v)$ of \eqref{weakly coupled system} on $[0,T)$ and define the following couple of time-dependent functionals related to this solution:
\begin{align*}
U(t)   \doteq \int_{\mathbb{R}^n} u(t,x) \, dx ,  \quad
V(t)   \doteq \int_{\mathbb{R}^n} v(t,x) \, dx.
\end{align*}
The proof of Theorem \ref{Thm blowup iteration} consists of two parts. In the first part we determine a pair of coupled integral inequalities for $U$ and $V$, while in the second one an iteration argument is used so that the blow-up of $(U,V)$ in finite time can be shown. 

\subsection{Determination of the iteration frame} 

If we choose  $\phi=\phi(s,x)$ and $\psi=\psi(s,x)$ in \eqref{def u} and in \eqref{def v}, respectively, satisfying  $\phi\equiv 1 \equiv \psi$ on $\{(x,s)\in  [0,t]\times \mathbb{R}^n :|x|\leqslant s+R\}$, then, we find
\begin{align*}&\int_{\mathbb{R}^n}\partial_t u(t,x)\,dx-\int_{\mathbb{R}^n}\partial_t u(0,x)\,dx+\int_0^t \int_{\mathbb{R}^n}b_1(s) \, \partial_t u(s,x) \, dx \, ds
=\int_0^t \int_{\mathbb{R}^n}|v(s,x)|^pdx \, ds, \\
&\int_{\mathbb{R}^n}\partial_t v(t,x)\,dx-\int_{\mathbb{R}^n}\partial_t v(0,x)\,dx+\int_0^t \int_{\mathbb{R}^n} b_2(s) \, \partial_t v(s,x) \, dx \, ds
=\int_0^t \int_{\mathbb{R}^n}|u(s,x)|^q dx \, ds
\end{align*}
or, equivalently,
\begin{align*}
& U'(t)-U'(0)+\int_0^t b_1(s)  U'(s)\,ds =\int_0^t \int_{\mathbb{R}^n}|v(s,x)|^pdx \, ds, \\
& V'(t)-V'(0)+\int_0^t b_2(s)  V'(s)\,ds=\int_0^t \int_{\mathbb{R}^n}|u(s,x)|^q dx \, ds.
\end{align*}
Differentiating with repect to $t$ the previous equalities, we arrive at
\begin{align}\label{U-Dyn}
U''(t)+b_1(t) U'(t)=\int_{\mathbb{R}^n}|v(t,x)|^p dx, \\
\label{V-Dyn}
V''(t)+b_2(t) V'(t)=\int_{\mathbb{R}^n}|u(t,x)|^q dx.
\end{align}
Multiplying \eqref{U-Dyn} by $m_1(t)$, we get
\begin{align*}
m_1(t)U''(t)+m_1(t) b_1(t) U'(t)= \frac{d}{dt} \big(m_1(t)U'(t)\big)= m_1(t) \int_{\mathbb{R}^n}|v(t,x)|^p dx.
\end{align*}
 Hence, integrating over $[0,t]$ and using the assumption $u_1\geqslant 0$, we obtain
\begin{align*}
m_1(t)U'(t)= m_1(0)U'(0) + \int_0^t m_1(s) \int_{\mathbb{R}^n}|v(s,x)|^p dx \, ds \geqslant  \int_0^t m_1(s) \int_{\mathbb{R}^n}|v(s,x)|^p dx \, ds.
\end{align*}
Consequently, using the boundedness of the multiplier $m_1$, from \eqref{boundedness multipliers} we have
\begin{align*}
U'(t)  \geqslant  \int_0^t \frac{m_1(s)}{m_1(t)} \int_{\mathbb{R}^n}|v(s,x)|^p dx \, ds \geqslant  m_1(0) \int_0^t \int_{\mathbb{R}^n}|v(s,x)|^p dx \, ds.
\end{align*} 
Since $u_0$ is nonnegative a further integration on $[0,t]$ provides
\begin{align}
 U(t)& \geqslant  m_1(0) \int_0^t \int_0^\tau \int_{\mathbb{R}^n}|v(s,x)|^p dx \, ds\, d\tau.  \label{iter1}
\end{align}
Moreover, due to H\"{o}lder inequality and the compactness of the support of solution with respect to $x$, from \eqref{iter1} we derive
\begin{align}
  U(t) &\geqslant C_0\int_0^t \int_0^\tau (1+s)^{-n(p-1)}|V(s)|^p ds\, d\tau, \label{iter2}
\end{align}
where $C_0\doteq m_1(0) (\meas(B_1))^{1-p}R^{-n(p-1)}>0.$

In a similar way, using the assumptions $v_0,v_1\geqslant 0$ and the properties of the multiplier $m_2$, from \eqref{V-Dyn} we may derive
\begin{align}
V(t)&\geqslant m_2(0) \int_0^t \int_0^\tau \int_{\mathbb{R}^n}|u(s,x)|^q dx \, ds\, d\tau   \label{iter3} \\
 &\geqslant K_0 \int_0^t \int_0^\tau (1+s)^{-n(q-1)}|U(s)|^q ds\, d\tau \label{iter4},
\end{align} where $K_0\doteq m_2(0)(\meas(B_1))^{1-q}R^{-n(q-1)}>0.$

\subsection{Iteration argument} \label{Subsection iteration argument subcritical case}
Now we can proceed with the second part of the proof, where we use a standard iteration argument (see for example \cite{LT18Scatt,TL1709} in the cae of a single equation or \cite{AKT00,Pal19} in the case of a weakly coupled system). We will apply an iteration method based on lower bound estimates \eqref{Priori u^q}, \eqref{Priori v^p}, \eqref{iter1}, \eqref{iter3} and  the iteration frame \eqref{iter2}, \eqref{iter4}.  

By using an induction argument, we prove that 
\begin{align}
U(t)& \geqslant D_j(1+t)^{-a_j}t^{b_j} \quad \mbox{for} \ \ t\geqslant 0, \label{lower bound U j} \\
V(t)& \geqslant \Delta_j(1+t)^{-\alpha_j}t^{\beta_j} \quad \mbox{for} \ \ t\geqslant 0, \label{lower bound V j}
\end{align} where $\{a_j\}_{j\geqslant 1}$, $\{b_j\}_{j\geqslant 1}$, $\{D_j\}_{j\geqslant 1}$, $\{\alpha_j\}_{j\geqslant 1}$, $\{\beta_j\}_{j\geqslant 1}$ and $\{\Delta_j\}_{j\geqslant 1}$ are suitable sequences of nonnegative real numbers to be determined afterwards.

We prove first the base case $j=1$. Plugging the lower bound estimate for the nonlinear term $|v|^p$ given by \eqref{Priori v^p} in \eqref{iter1}, we obtain for $t\geqslant 0$
\begin{align*}
U(t) & \geqslant m_1(0) K_1 \varepsilon^p  \int_{0}^t \int_{0}^\tau  (1+s)^{n-1-(n-1)\frac{p}{2}} \, ds\, d\tau \\
&  \geqslant  m_1(0) K_1\varepsilon^p (1+t)^{-(n-1)\frac{p}{2}} \int_{0}^t \int_{0}^\tau  s^{n-1} \, ds\, d\tau  \geqslant  \frac{m_1(0) K_1}{n(n+1)}\varepsilon^p (1+t)^{-(n-1)\frac{p}{2}}   t^{n+1} ,
\end{align*} which is the desired estimate, provided that we define
\begin{align*}
D_1 &\doteq \frac{m_1(0) K_1}{n(n+1)}\varepsilon^p ,\qquad  a_1 \doteq (n-1)\frac{p}{2}, \quad b_1\doteq n+1.
\end{align*}
Analogously, we can prove \eqref{lower bound V j} for $j=1$ combining \eqref{iter3} and \eqref{Priori u^q}, provided that
\begin{align*}
\Delta_1 & \doteq \frac{m_2(0) C_1}{n(n+1)}\varepsilon^q, \qquad \alpha_1 \doteq (n-1)\frac{q}{2}, \quad \beta_1\doteq n+1.
\end{align*}

Let us proceed with the inductive step: \eqref{lower bound U j} and \eqref{lower bound V j} are assumed to be true for $j\geqslant 1$, hence, we prove them for $j+1$. Let us combine \eqref{lower bound V j} in \eqref{iter2}. Then, since $\alpha_j$ and $\beta_j$ are positive numbers, we obtain
\begin{align*}
 U(t) &\geqslant C_0 \, \Delta_j^p  \int_{0}^t \int_{0}^\tau  (1+s)^{-n(p-1)-\alpha_j p}s^{\beta_j p} \, ds\, d\tau \geqslant C_0 \, \Delta_j^p (1+t)^{-n(p-1)-\alpha_j p} \int_{0}^t \int_{0}^\tau  s^{\beta_j p} \, ds\, d\tau \\
 & =  \frac{C_0 \, \Delta_j^p}{(\beta_j p+1)(\beta_j p+2)} (1+t)^{-n(p-1)-\alpha_j p}  t^{\beta_j p+2} , 
\end{align*} that is, \eqref{lower bound U j} for $j+1$ provided that
\begin{align*}
D_{j+1}\doteq \frac{C_0 \, \Delta_j^p}{(\beta_j p+1)(\beta_j p+2)}, \qquad a_{j+1}\doteq n(p-1)+\alpha_j p, \quad b_{j+1}\doteq \beta_j p +2. 
\end{align*}
Similarly, we can prove \eqref{lower bound V j} for $j+1$ combining \eqref{iter4} and \eqref{lower bound U j}, in the case in which
\begin{align*}
\Delta_{j+1}\doteq \frac{K_0 \, D_j^q }{(b_j q+1)(b_j q+2)} \qquad \alpha_{j+1}\doteq n(q-1)+a_j q, \quad \beta_{j+1}\doteq b_j q+2. 
\end{align*}
So, we proved the inductive step. In particular, the positiveness of the exponents $a_j,b_j,\alpha_j,\beta_j$ follows immediately by the recursive relations we required throughout the inductive step and by the fact that the initial terms $a_1,b_1,\alpha_1,\beta_1$ are nonnegative.

Let us determine now explicitly the representations for $a_j,b_j,\alpha_j,\beta_j$. Let us begin with the case in which $j$ is an odd integer. We start with $a_j$. Using the previous definitions and applying iteratively the obtained relation, we have
\begin{align}
a_j &= n(p-1)+\alpha_{j-1} p =  n(p-1)+\big(n(q-1)+a_{j-2} q\big) p = n(pq-1) + a_{j-2} pq \notag \\ &= n(pq-1) \sum_{k=0}^{(j-3)/2}(pq)^k + a_{1} (pq)^{\frac{j-1}{2}} = (n+a_{1}) (pq)^{\frac{j-1}{2}} -n \label{explicit epression a j dep a1} \\ & \hphantom{= n(pq-1) \sum_{k=0}^{(j-3)/2}(pq)^k + a_{1} (pq)^{\frac{j-1}{2}} } \ = \big(n+\tfrac{n-1}{2}p\big) (pq)^{\frac{j-1}{2}} -n. \label{explicit epression a j}
\end{align}
In a completely analogous way, for odd $j$ we get
\begin{align}
\alpha_j& = (n+\alpha_{1}) (pq)^{\frac{j-1}{2}} -n \label{explicit epression alpha j dep alpha1}\\ &= \big(n+\tfrac{n-1}{2}q\big) (pq)^{\frac{j-1}{2}} -n.  \label{explicit epression alpha j}
\end{align} 
For the sake of brevity, we do not derive the representations of $a_j$ and $\alpha_j$ for even $j$, as it is unnecessary for the proof of the theorem. 

Similarly, combining the definitions of $b_j$ and $\beta_j$,  for odd $j$ we have
\begin{align*}
b_j & =\beta_{j-1}p +2 = \big( b_{j-2} q +2\big) p +2 =  b_{j-2} pq+ 2(p+1) ,\\ 
\beta_j & = b_{j-1}q +2 = \big( \beta_{j-2} p +2\big) q +2 =  \beta_{j-2} pq+ 2(q+1).  
\end{align*} Also,
\begin{align}
b_j  = b_1(pq)^{\frac{j-1}{2}}+2(p+1) \sum_{k=0}^{(j-3)/2}(pq)^k & = \Big(b_1+\tfrac{2(p+1)}{pq-1}\Big)(pq)^{\frac{j-1}{2}} -\tfrac{2(p+1)}{pq-1} \label{explicit epression b j odd dep b1}\\ &= \Big(n+1+\tfrac{2(p+1)}{pq-1}\Big)(pq)^{\frac{j-1}{2}} -\tfrac{2(p+1)}{pq-1},  \label{explicit epression b j odd}\\
\beta_j  = \beta_1(pq)^{\frac{j-1}{2}}+2(q+1) \sum_{k=0}^{(j-3)/2}(pq)^k &= \Big(\beta_1+\tfrac{2(q+1)}{pq-1}\Big)(pq)^{\frac{j-1}{2}} -\tfrac{2(q+1)}{pq-1} \label{explicit epression beta j odd dep beta1}\\ &= \Big(n+1+\tfrac{2(q+1)}{pq-1}\Big)(pq)^{\frac{j-1}{2}} -\tfrac{2(q+1)}{pq-1}. \label{explicit epression beta j odd}
\end{align} In the case in which $j$ is even, from \eqref{explicit epression beta j odd} and \eqref{explicit epression b j odd} we have, respectively,
\begin{align}
b_j &= \beta_{j-1}p +2 = p \Big(n+1+\tfrac{2(q+1)}{pq-1}\Big)(pq)^{\frac{j-2}{2}} -\tfrac{2p(q+1)}{pq-1}+2, \label{explicit epression b j even} \\
\beta_j &= b_{j-1}q +2 = q \Big(n+1+\tfrac{2(p+1)}{pq-1}\Big)(pq)^{\frac{j-2}{2}} -\tfrac{2q(p+1)}{pq-1}+2. \label{explicit epression beta j even}
\end{align}
Thus, from \eqref{explicit epression b j odd}, \eqref{explicit epression beta j odd}, \eqref{explicit epression b j even} and \eqref{explicit epression beta j even}, we see that for any $j\geqslant 1$ the following estimates hold:
\begin{equation}
\begin{split}
b_j &< B_0 (pq)^{\frac{j-1}{2}}, \qquad \beta_j < \widetilde{B}_0 (pq)^{\frac{j-1}{2}} \qquad \mbox{for} \ j \ \mbox{odd}, \\
b_j &< B_0 (pq)^{\frac{j}{2}}, \qquad \ \ \beta_j < \widetilde{B}_0 (pq)^{\frac{j}{2}} \ \ \ \qquad \mbox{for} \ j \ \mbox{even},
\end{split} \label{upper bound b j and beta j subcrit}
\end{equation} where $B_0=B_0(p,q,n)$ and $\widetilde{B}_0=\widetilde{B}_0(p,q,n)$ are positive and independent of $j$ constants. 
 
The next step is to derive lower bounds for $D_j$ and $\Delta_j$. From  the definition of $D_j$ and $\Delta_j$ it follows immediately 
\begin{align}\label{lower bound Dj Deltaj n1}
D_{j} & \geqslant \frac{C_0  }{b_{j}^2}  \Delta_{j-1}^p \ \ \mbox{and} \ \ \
\Delta_{j}  \geqslant \frac{K_0 }{\beta_{j}^2} D_{j-1}^q .
\end{align} 
Hence, due to \eqref{upper bound b j and beta j subcrit}, coupling the inequalities in \eqref{lower bound Dj Deltaj n1}, it follows
\begin{align} \label{lower bound Dj  n2}
D_j & \geqslant \frac{C_0}{B_0^2}\frac{\Delta_{j-1}^p}{(pq)^{j-1}} \geqslant \frac{C_0 K_0^p}{B_0^2} \frac{D_{j-2}^{pq}}{(pq)^{j-1}\beta_{j-1}^{2p}} \geqslant \frac{C_0 K_0^p}{B_0^{2}\widetilde{B}_0^{2p}} \frac{D_{j-2}^{pq}}{\big((pq)^{p+1}\big)^{j-1}}  = \frac{\widetilde{C} D_{j-2}^{pq}}{\big((pq)^{p+1}\big)^{j-1}},\\  \label{lower bound Deltaj  n2}
\Delta_j & \geqslant \frac{K_0}{\widetilde{B}_0^2}\frac{D_{j-1}^q}{(pq)^{j-1}} \geqslant \frac{K_0 C_0^q}{\widetilde{B}_0^2} \frac{\Delta_{j-2}^{pq}}{(pq)^{j-1}b_{j-1}^{2q}} \geqslant \frac{K_0 C_0^q}{\widetilde{B}_0^{2}B_0^{2q}} \frac{\Delta_{j-2}^{pq}}{\big((pq)^{q+1}\big)^{j-1}}  =  \frac{\widetilde{K} \Delta_{j-2}^{pq}}{\big((pq)^{q+1}\big)^{j-1}},
\end{align} where $\widetilde{C}\doteq C_0 K_0^p/B_0^{2}\widetilde{B}_0^{2p}$ and $\widetilde{K}\doteq K_0 C_0^q/\widetilde{B}_0^{2}B_0^{2q}$. 
By \eqref{lower bound Dj  n2}, if $j$ is odd, then, we have
\begin{align*}
\log D_j & \geqslant pq \log D_{j-2} -(j-1)(p+1)\log (pq) +\log \widetilde{C} \\
& \geqslant (pq)^2 \log D_{j-4} -\big((j-1)+(j-3)pq\big)(p+1)\log (pq) +\big(1+pq\big)\log \widetilde{C} \\
 & \geqslant \ \cdots  \geqslant (pq)^{\frac{j-1}{2}} \log D_{1} -\Bigg(\sum_{k=1}^{(j-1)/2}(j+1-2k)\,(pq)^{k-1}\Bigg)(p+1)\log (pq) +\Bigg(\sum_{k=0}^{(j-3)/2}(pq)^k\Bigg)\log \widetilde{C}.
\end{align*}
Using an inductive argument, the following formula can be shown:
\begin{align*}
\sum_{k=1}^{(j-1)/2}(j+1-2k)\, (pq)^{k-1}= \frac{1}{pq-1} \bigg(2(pq) \, \frac{(pq)^{\frac{j-1}{2}}-1}{pq-1}-j+1\bigg).
\end{align*}
Also,
\begin{align*}
\log D_j & \geqslant (pq)^{\frac{j-1}{2}} \bigg[\log D_{1}-\frac{2(pq)(p+1)}{(pq-1)^2}\log(pq)+\frac{\log \widetilde{C}}{pq-1}\bigg]+\frac{2(pq)(p+1)}{(pq-1)^2}\log(pq)\\ & \qquad +(j-1)\frac{(p+1)}{pq-1}\log(pq) -\frac{\log \widetilde{C}}{pq-1}
\end{align*}
Consequently, for an odd $j$ such that $j>\frac{\log \widetilde{C}}{(p+1)\log(pq)}-\frac{2(pq)}{pq-1}+1$, it holds
\begin{align} \label{lower bound log Dj}
\log D_j\geqslant (pq)^{\frac{j-1}{2}}\big(\log D_1-S_{p,q}(\infty)\big),
\end{align} where $S_{p,q}(\infty)\doteq \frac{2(pq)(p+1)}{(pq-1)^2}\log(pq)-\frac{\log \widetilde{C}}{pq-1}$.

Similarly, by using \eqref{lower bound Deltaj  n2}, it is possible to prove for an odd $j$ the following estimate:
\begin{align*}
\log \Delta_j & \geqslant (pq)^{\frac{j-1}{2}} \bigg[\log \Delta_{1}-\frac{2(pq)(q+1)}{(pq-1)^2}\log(pq)+\frac{\log \widetilde{K}}{pq-1}\bigg]+\frac{2(pq)(q+1)}{(pq-1)^2}\log(pq)\\ & \qquad +(j-1)\frac{(q+1)}{pq-1}\log(pq) -\frac{\log \widetilde{K}}{pq-1}.
\end{align*} Thus, for $j>\frac{\log \widetilde{K}}{(q+1)\log(pq)}-\frac{2(pq)}{pq-1}+1$ the last inequality implies 
\begin{align}\label{lower bound log Deltaj}
\log \Delta_j\geqslant (pq)^{\frac{j-1}{2}}\big(\log \Delta_1-\widetilde{S}_{p,q}(\infty)\big),
\end{align} where $\widetilde{S}_{p,q}(\infty)\doteq \frac{2(pq)(q+1)}{(pq-1)^2}\log(pq)-\frac{\log \widetilde{K}}{pq-1}$.  Let us set $j_0\doteq \big\lceil  \frac{1}{ \log(pq)} \max\{\frac{\log \widetilde{ C}}{p+1},\frac{\log\widetilde{K}}{q+1}\}-\frac{2pq}{pq-1}+1\big\rceil$, for the sake of brevity.
Combining the iterative inequality in \eqref{lower bound U j} and the lower bound in \eqref{lower bound log Dj}, for an odd $j>j_0$ and $t\geqslant 0$, employing \eqref{explicit epression a j} and  \eqref{explicit epression b j odd}, we arrive at 
\begin{align*}
U(t) & \geqslant \exp\Big((pq)^{\frac{j-1}{2}}\big(\log D_1- S_{p,q}(\infty)\big)\Big) (1+t)^{-a_j}t^{b_j} \\
& = \exp\Big((pq)^{\frac{j-1}{2}}\big(\log D_1- S_{p,q}(\infty)\big)\Big) (1+t)^{-\big(n+\frac{n-1}{2}p\big)(pq)^{\frac{j-1}{2}} +n}t^{\big(n+1+\frac{2(p+1)}{pq-1}\big)(pq)^{\frac{j-1}{2}} -\frac{2(p+1)}{pq-1}} \\
& = \exp\Big((pq)^{\frac{j-1}{2}}\big(\log D_1-\big(n+\tfrac{n-1}{2}p\big)\log(1+t)+\big(n+1+\tfrac{2(p+1)}{pq-1}\big)\log t- S_{p,q}(\infty)\big)\Big)  (1+t)^{n}t^{-\frac{2(p+1)}{pq-1}}.
\end{align*}
Consequently, for $t\geqslant 1$, using the inequality $\log 2t \geqslant \log(1+t)$, from the previous estimate we find
\begin{align}
U(t) & \geqslant \exp\Big((pq)^{\frac{j-1}{2}}J(t)\Big)   (1+t)^{n}t^{-\frac{2(p+1)}{pq-1}}, \label{lower bound U with J}
\end{align} where 
\begin{align}
J(t) & \doteq \log D_1+\big(\big(n+1+\tfrac{2(p+1)}{pq-1}\big)-\big(n+\tfrac{n-1}{2}p\big)\big)\log t -\big(n+\tfrac{n-1}{2}p\big)\log 2- S_{p,q}(\infty)\notag \\
& =  \log\bigg( D_1t^{\tfrac{pq+2p+1}{pq-1}-\tfrac{n-1}{2}p}\bigg)-\big(n+\tfrac{n-1}{2}p\big)\log 2- S_{p,q}(\infty). \label{def J(t)}
\end{align}
Let us point out that the power of $t$ in the above definition is positive if and only if $F(n,q,p)>0$.

In an analogous way, from \eqref{lower bound V j}, \eqref{lower bound log Deltaj}, \eqref{explicit epression alpha j} and \eqref{explicit epression beta j odd} we obtain for $t\geqslant 1$ and for an odd $j>j_0$
\begin{align}
V(t) & \geqslant \exp\Big((pq)^{\frac{j-1}{2}}\widetilde{J}(t)\Big)   (1+t)^{n}t^{-\frac{2(q+1)}{pq-1}}, \label{lower bound V with Jtilde}
\end{align} where 
\begin{align} \label{def Jtilde(t)}
\widetilde{J}(t) & \doteq  \log\bigg( \Delta_1t^{\tfrac{pq+2q+1}{pq-1}-\tfrac{n-1}{2}q}\bigg)-\big(n+\tfrac{n-1}{2}q\big)\log 2- \widetilde{S}_{p,q}(\infty)
\end{align} and in this case the power of $t$ is positive if and only if $F(n,p,q) >0$.

If $F(n,q,p)>0$, then, we can find $\varepsilon_0=\varepsilon_0(u_0,u_1,v_0,v_1,n,p,q,b_1,b_2,R)>0$ such that $$ \widehat{C}\varepsilon_0^{-F(n,q,p)^{-1}}\geqslant 1,$$ where  $\widehat{C}\doteq \Big(\tfrac{n(n+1)}{m_1(0)K_1}2^{n+\frac{n-1}{2}p}\exp(S_{p,q}(\infty))\Big)^{\frac{1}{pF(n,q,p)}}$. 
Therefore, for $\varepsilon\in (0,\varepsilon_0]$  and $t>  \widehat{C}\varepsilon^{-F(n,q,p)^{-1}}$ we have  $ t\geqslant 1$ and $J(t)>0$.  Letting $j\to \infty$ in \eqref{lower bound U with J}, the lower bound blows up and, consequently, $U$ may be finite only for $t\leqslant \widehat{C}\varepsilon^{-F(n,q,p)^{-1}}$. 

Analogously, in the other case $F(n,p,q)>0$, assuming that $$ \widehat{K}\varepsilon_0^{-F(n,p,p)^{-1}}\geqslant 1,$$ where the multiplicative constant in this case is given by $\widehat{K}\doteq \Big(\tfrac{n(n+1)}{m_2(0)C_1}2^{n+\frac{n-1}{2}q}\exp(\widetilde{S}_{p,q}(\infty))\Big)^{\frac{1}{qF(n,p,q)}}$, then, for any $\varepsilon\in (0,\varepsilon_0]$  and any $t>  \widehat{K}\varepsilon^{-F(n,p,q)^{-1}} $ we get $t\geqslant 1$ and $\widetilde{J}(t)>0$. Thus, as $j\to \infty$ in \eqref{lower bound V with Jtilde} the lower bound for $V(t)$ diverges. Also, $V$ may be finite only for $t\leqslant \widehat{K} \varepsilon^{-F(n,p,q)^{-1}}$. So, we proved Theorem \ref{Thm blowup iteration} and the estimate for the lifespan of the local solution given in \eqref{lifespan upper bound estimate}.

\subsection{Proof of Corollaries \ref{Corollary A}, \ref{Corollary B} and \ref{Corollary C}} \label{Subsection corollaries low dimensions} 

In this section we sketch how it is possible to modify the proof of Theorem \ref{Thm blowup iteration} in order to show the improvement of \eqref{lifespan upper bound estimate} as stated in Corollary  \ref{Corollary A}, in Corollary \ref{Corollary B} and in Corollary \ref{Corollary C}.

The first remark is that \eqref{Priori u^q} and \eqref{Priori v^p} can be improved in the case $n=1$ and in the case $n=2$ for exponents $p,q$ such that $1<p,q<2$, provided that the initial speeds for $u$ and $v$ are nontrivial (i.e., when the integrals of $u_1,v_1$ over $\mathbb{R}^n$ do not vanish). Indeed, as $U'(0)=\varepsilon \int_{\mathbb{R}^n} u_1(x) dx>0$ and $V'(0)=\varepsilon \int_{\mathbb{R}^n}v_1(x) dx>0$, since $U,V$ are convex functions, we get immediately
$U(t)\geqslant U'(0) \, t $ and $ V(t)\geqslant V'(0) \,t $. Consequently, using again the support condition for $u$ and $v$ and H\"older's inequality, we have
\begin{align*}
\int_{\mathbb{R}^n}|u(t,x)|^q dx & \geqslant \widetilde{C}_1  (1+t)^{-n(q-1)} (U(t))^q \geqslant \widetilde{C}_1   \varepsilon^q  \big(I[u_1]\big)^q(1+t)^{-n(q-1)} t^q , \\
\int_{\mathbb{R}^n}|v(t,x)|^p dx & \geqslant \widetilde{K}_1   (1+t)^{-n(p-1)} (V(t))^p \geqslant \widetilde{K}_1   \varepsilon^p  \big(I[ v_1]\big)^p(1+t)^{-n(p-1)} t^p,
\end{align*} where $\widetilde{C}_1,\widetilde{K}_1$ are suitable constants depending on $n,p,q,R$ and $I[f]\doteq \int_{\mathbb{R}^n} f(x)dx$. For large times, these lower bounds are stronger than \eqref{Priori u^q} and \eqref{Priori v^p} in the above mentioned cases. Hence, for the proofs of Corollaries  \ref{Corollary A}, \ref{Corollary B} and \ref{Corollary C} it is possible to follow faithfully the steps of the proof of Theorem  \ref{Thm blowup iteration} with few crucial modifications. If $n=1$ or $n=2 $ and $1<p<2$, then, \eqref{lower bound U j} in the base case is true for 
\begin{align*}
D_1 &\doteq \widetilde{K}_1 \big(I[ v_1]\big)^p \varepsilon^p, \quad  a_1\doteq p, \ \ b_1 \doteq (n-1)p, 
\end{align*}
and, similarly, if $n=1$ or $n=2 $ and $1<q<2$, then, \eqref{lower bound V j} in the base case is true for 
\begin{align*}
\Delta_1 &\doteq \widetilde{C}_1 \big(I[ u_1]\big)^q \varepsilon^q, \quad  \alpha_1\doteq q, \ \ \beta_1 \doteq (n-1)q.
\end{align*}
If $n=1$ or $n=2 $ and $1<p<2$, then, we can replace \eqref{def J(t)} by 
\begin{align*}
J(t) & = \log\bigg( D_1t^{b_1-a_1+\tfrac{2(p+1)}{pq-1}-n}\bigg)-(n+p)\log 2- S_{p,q}(\infty) = \log\bigg( D_1t^{p\,G(n,p,q)}\bigg)-(n+p)\log 2- S_{p,q}(\infty),
\end{align*} substituting the new values of $a_1,b_1$ in \eqref{explicit epression a j dep a1} and \eqref{explicit epression b j odd dep b1} instead of the ones used in Section \ref{Subsection iteration argument subcritical case}. Analogously, if $n=1$ or $n=2 $ and $1<q<2$, then, we can replace \eqref{def Jtilde(t)} by 
\begin{align*}
\widetilde{J}(t) & = \log\bigg( \Delta_1t^{\beta_1-\alpha_1+\tfrac{2(q+1)}{pq-1}-n}\bigg)-(n+q)\log 2-\widetilde{S}_{p,q}(\infty)= \log\bigg( \Delta_1t^{q\,G(n,q,p)}\bigg)-(n+q)\log 2- \widetilde{S}_{p,q}(\infty),
\end{align*} substituting now the new values of $\alpha_1,\beta_1$ in \eqref{explicit epression alpha j dep alpha1} and \eqref{explicit epression beta j odd dep beta1} in place of the ones used in Section \ref{Subsection iteration argument subcritical case}. Having in mind these changes, the proof of each corollary is a straightforward modification of the arguments used in Section \ref{Subsection iteration argument subcritical case}.

\section{Critical case: Proof of Theorem \ref{Thm critical case}} \label{Section critical case}

In this section we will prove Theorem \ref{Thm critical case}. The structure of the proof is organized as follows: in Section \ref{Subsection crit case: xi and eta functions} we recall the definition of certain auxiliary functions, which are necessary in order to introduce the functionals that we will estimate throughout the proof, and lower bound estimates for a fundamental system of solutions of the family of ODEs $\mathcal{L}_b y=0$, where $\mathcal{L}_b=\partial_t^2+b(t)\partial_ t-\lambda^2$ and $\lambda$ is a real parameter; moreover, using these estimates, we derive a couple of crucial estimates for the averages of the components of a local in time solution multiplied by one of the above cited auxiliary functions (these averages are actually the functionals whose dynamic we shall use to prove the blow-up result); then, in Section \ref{Subsection crit case: lower bounds estimates} we derive two coupled integral inequalities and lower bounds containing logarithmic terms for the functionals; in Section \ref{Subsection crit case: slicing method} we combine the lower bounds and the integral inequalities from Section \ref{Subsection crit case: lower bounds estimates} in order to prove a family of lower bound estimates via the slicing method; finally, in Section \ref{Subsection crit case: lifespan estimate} we use this sequence of lower bound estimates to proved the blow-up result and to derive the upper bound estimate for the lifespan of local in time solutions.

\subsection{Definition of the auxiliary functions} 
\label{Subsection crit case: xi and eta functions}

In this section we recall the definition of a pair of auxiliary functions from \cite{WakYor18}, which are necessary in order to introduce the time-dependent functionals that will be considered for the iteration argument. 

Let $r>-1$ be a parameter. Then, we introduce the functions
\begin{align*}
\xi_r(t,x) & \doteq  \int_0^{\lambda_0} e^{-\lambda(t+R)} \cosh (\lambda t) \, \Phi(\lambda x) \, \lambda^r \, d\lambda,\\
\eta_r(t,s,x) & \doteq  \int_0^{\lambda_0} e^{-\lambda(t+R)} \frac{\sinh (\lambda (t-s))}{\lambda(t-s)} \,\Phi(\lambda x) \,\lambda^r \, d\lambda,
\end{align*} where $\lambda_0$ is a fixed positive parameter and $\Phi$ is defined by \eqref{def eigenfunction laplace op}.

Some useful properties of $\xi_r$ and $\eta_r$ are stated in the following lemma, whose proof can be found in \cite[Lemma 3.1]{WakYor18}.

\begin{lem} \label{lemma eta and xi estimates}Let $n\geqslant 2$. There exist $\lambda_0>0$ such that the following properties hold:
\begin{itemize}
\item[\rm{(i)}] if $r>-1$, $|x|\leqslant R$ and $t\geqslant 0$, then, 
\begin{align*}
\xi_r(t,x) & \geqslant A_0, \\
\eta_r(t,0,x) & \geqslant B_0 \langle t\rangle^{-1};
\end{align*}
\item[\rm{(ii)}] if $r>-1$, $|x|\leqslant s+R$ and $t>s\geqslant 0$, then, 
\begin{align*}
\eta_r(t,s,x) & \geqslant B_1 \langle t\rangle^{-1} \langle s\rangle^{-r};
\end{align*}
\item[\rm{(iii)}] if $r>\frac{n-3}{2}$, $|x|\leqslant t+R$ and $t> 0$, then, 
\begin{align*}
\eta_r(t,t,x) & \leqslant B_2 \langle t\rangle^{-\frac{n-1}{2}} \langle t-|x| \rangle^{\frac{n-3}{2}-r}.
\end{align*}
\end{itemize}
Here $A_0$ and $B_k$, with $k=0,1,2$, are positive constants depending only on $\lambda_0$, $r$ and $R$ and we denote $\langle y\rangle \doteq 3+|y|$.
\end{lem}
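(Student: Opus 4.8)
The plan is to reduce the estimates for $\xi_r$ and $\eta_r$ to explicit one-dimensional Laplace-type integrals and analyze their asymptotics in the relevant regimes. First I would record the pointwise behaviour of the eigenfunction $\Phi$ from \eqref{def eigenfunction laplace op}: for $n\geqslant 2$ one has $\Phi(y)\sim c_n\langle y\rangle^{-\frac{n-1}{2}}e^{|y|}$ as $|y|\to\infty$ (by stationary phase / Laplace on the sphere) and $\Phi(y)\geqslant c_n'>0$ with $\Phi$ bounded on compact sets, together with the positivity $\Phi>0$ everywhere. This reduces each claimed bound to controlling $\int_0^{\lambda_0}e^{-\lambda(t+R)}(\cdots)\Phi(\lambda x)\lambda^r\,d\lambda$, where the inner factor is $\cosh(\lambda t)$ for $\xi_r$ and $\frac{\sinh(\lambda(t-s))}{\lambda(t-s)}$ for $\eta_r$.

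For part (i), I would split $\cosh(\lambda t)=\tfrac12 e^{\lambda t}+\tfrac12 e^{-\lambda t}$; the dominant piece carries the factor $e^{-\lambda(t+R)}e^{\lambda t}=e^{-\lambda R}$, so on $|x|\leqslant R$ the integrand is bounded below by a fixed positive continuous function of $\lambda$ times $\lambda^r$ on $(0,\lambda_0)$, giving $\xi_r(t,x)\geqslant A_0$ uniformly in $t$. For $\eta_r(t,0,x)$ I would use $\frac{\sinh(\lambda t)}{\lambda t}\geqslant \frac{e^{\lambda t}}{2\lambda t}(1-e^{-2\lambda t})\gtrsim \frac{e^{\lambda t}}{\langle t\rangle}$ on a suitable subinterval $\lambda\in(\lambda_0/2,\lambda_0)$ (crudely, $\sinh z\geqslant cz$ for bounded $z$ and $\sinh z\gtrsim e^z$ for $z$ large), combine with $e^{-\lambda(t+R)}e^{\lambda t}=e^{-\lambda R}$ and $\Phi(\lambda x)\geqslant c'$, and integrate to extract the $\langle t\rangle^{-1}$. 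Part (ii) is the same computation with $t$ replaced by $t-s$ in the hyperbolic factor and with the $x$-dependence now possibly large: $e^{-\lambda(t+R)}\sinh(\lambda(t-s))\lesssim e^{-\lambda(s+R)}$, $\Phi(\lambda x)$ contributes at most $e^{\lambda|x|}\leqslant e^{\lambda(s+R)}$, and a lower bound on a fixed $\lambda$-subinterval using $\Phi>0$ produces $\langle t\rangle^{-1}\langle s\rangle^{-r}$; the $\langle s\rangle^{-r}$ comes precisely from the $\lambda^r$ weight after rescaling $\lambda(t-s)$ or from restricting $\lambda\sim 1/\langle s\rangle$, whichever is cleaner.

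Part (iii) is the genuinely delicate estimate and I expect it to be the main obstacle. Here $t=s$, so $\frac{\sinh(\lambda(t-s))}{\lambda(t-s)}\to 1$ and one is left with $\eta_r(t,t,x)=\int_0^{\lambda_0}e^{-\lambda(t+R)}\Phi(\lambda x)\lambda^r\,d\lambda$. Using $\Phi(\lambda x)\lesssim \langle\lambda x\rangle^{-\frac{n-1}{2}}e^{\lambda|x|}$, the integrand is $\lesssim e^{-\lambda(t+R-|x|)}\langle\lambda x\rangle^{-\frac{n-1}{2}}\lambda^r$, and since $|x|\leqslant t+R$ the exponent $\mu\doteq t+R-|x|\geqslant 0$. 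One then substitutes $\sigma=\lambda\langle t\rangle$ (or $\lambda|x|$), writes the integral as $\langle t\rangle^{-\frac{n-1}{2}-r-1+\frac{n-1}{2}}\cdots$ — more precisely one separates the regime $\lambda|x|\lesssim 1$, where $\Phi\sim$ const and the integral behaves like $\mu^{-r-1}$ capped by $\lambda_0^{r+1}$, from $\lambda|x|\gtrsim 1$, where the $\langle\lambda x\rangle^{-\frac{n-1}{2}}$ factor gives the decay — and bookkeeps the powers to land on $\langle t\rangle^{-\frac{n-1}{2}}\langle t-|x|\rangle^{\frac{n-3}{2}-r}$. The condition $r>\frac{n-3}{2}$ is exactly what makes the exponent $\frac{n-3}{2}-r$ negative and the integral near $\lambda=0$ convergent after the rescaling. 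Since the statement is quoted verbatim from \cite[Lemma 3.1]{WakYor18}, the honest plan is to carry out these Laplace-integral asymptotics carefully, tracking constants' dependence on $\lambda_0,r,R$ only, and to fix $\lambda_0$ small enough that the asymptotic expansion of $\Phi$ is valid on the whole range $\lambda|x|\in(0,\lambda_0|x|)$ with uniform constants; alternatively one simply cites \cite{WakYor18} for the proof.
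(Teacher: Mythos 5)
The paper does not prove this lemma at all: it states ``Some useful properties of $\xi_r$ and $\eta_r$ are stated in the following lemma, whose proof can be found in [Lemma~3.1, WakYor18],'' and a following remark merely observes that, although the cited lemma is stated for $r>0$ in (i) and (ii), the proof in [WakYor18] applies unchanged to every $r>-1$. So the paper's ``route'' here is a citation, not an argument. That said, your sketch is a reasonable account of the computation one would actually carry out (and the one carried out in the cited reference): reduce to one-dimensional Laplace-type integrals via the eigenfunction asymptotic $\Phi(y)\sim c_n\langle y\rangle^{-\frac{n-1}{2}}e^{|y|}$ together with the uniform lower bound $\Phi\geqslant |\mathbb{S}^{n-1}|$ (Jensen), then split the $\lambda$-integral according to the size of $\lambda\langle t\rangle$ or $\lambda|x|$.

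Two small imprecisions worth flagging, neither fatal. In (ii) the near-cancellations you invoke (``$e^{-\lambda(t+R)}\sinh(\lambda(t-s))\lesssim e^{-\lambda(s+R)}$'' against ``$\Phi(\lambda x)\leqslant e^{\lambda|x|}\leqslant e^{\lambda(s+R)}$'') are \emph{upper}-bound heuristics and are not what produces the claimed \emph{lower} bound; for the latter one uses only $\Phi\geqslant|\mathbb{S}^{n-1}|$, restricts $\lambda$ to an interval $[\delta\langle s\rangle^{-1},2\delta\langle s\rangle^{-1}]$ (so that $\lambda^{r}\sim\langle s\rangle^{-r}$ and $\lambda(s+R)\lesssim 1$ uniformly), and then checks that $e^{-\lambda(t+R)}\tfrac{\sinh(\lambda(t-s))}{\lambda(t-s)}\gtrsim\langle s\rangle/\langle t\rangle$ on that interval using $\tfrac{1-e^{-2z}}{2z}\gtrsim\min(1,z^{-1})$ with $z=\lambda(t-s)$. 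In (iii), the original integral $\int_0^{\lambda_0}e^{-\lambda(t+R)}\Phi(\lambda x)\lambda^r\,d\lambda$ already converges near $\lambda=0$ for every $r>-1$, since $\Phi$ is bounded near the origin; the hypothesis $r>\tfrac{n-3}{2}$ is not about integrability of the raw integral but rather what makes the rescaled piece $\int e^{-\lambda\mu}\lambda^{r-\frac{n-1}{2}}\,d\lambda$ from the regime $\lambda|x|\gtrsim 1$ integrable at the origin and the displayed exponent $\tfrac{n-3}{2}-r$ negative. Apart from these details, the plan is sound, and, as you yourself note at the end, the honest route the paper takes is to cite [WakYor18].
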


\begin{rem} Even though in \cite{WakYor18} the previous lemma is stated requiring $r>0$ in {\rm(i)} and {\rm(ii)}, the proof provided in that paper is  valid for any $r>-1$ as well.
\end{rem}


\begin{lem}\label{lemma y1,y2} Let $\lambda$ be a positive real parameter and let $b\in \mathcal{C}^1([0,\infty))\cap L^1([0,\infty))$ be a nonnegative function. We introduce the differential operators
\begin{align*}
\mathcal{L}_b\doteq \partial_t^2 +b(t)\partial_t -\lambda^2, \quad \mathcal{L}_b^* \doteq \partial_s^2 -\partial_s b(s) -\lambda^2
\end{align*} and the fundamental system of solutions 
$y_j=y_j(t,s;\lambda, b)$, with $j=1,2$, such that
\begin{align*}
\mathcal{L}_b y_1(t,s;\lambda, b)=0, \ \ y_1(s,s;\lambda, b)=1,  \ \ \partial_t y_1(s,s;\lambda, b)=0; \\ 
\mathcal{L}_b y_2(t,s;\lambda, b)=0, \ \ y_2(s,s;\lambda, b)=0,  \ \ \partial_t y_1(s,s;\lambda, b)=1.
\end{align*} Then, $\{y_1,y_2\}$ depends continuously on $\lambda$ and satisfies for $t\geqslant s \geqslant 0$ the following estimates:
\begin{align*}
& \mathrm{(i)} \ \ y_1(t,s;\lambda, b)\geqslant e^{-\| b\|_{L^1}} \cosh \lambda(t-s), \\
& \mathrm{(ii)} \ \ y_2(t,s;\lambda, b)\geqslant e^{-2\| b\|_{L^1}} \frac{\sinh \lambda(t-s)}{\lambda}. 
\end{align*} Moreover,
\begin{align*}
& \mathrm{(iii)} \ \ \mathcal{L}_b^* y_2(t,s;\lambda, b)=0, \\
& \mathrm{(iv)} \ \ y_1(t,0;\lambda, b)= b(0) y_2(t,0;\lambda, b)-\partial_s y_2(t,0;\lambda, b), \\
& \mathrm{(v)} \ \ \partial_s y_2(t,t;\lambda, b)= -1. 
\end{align*}
\end{lem}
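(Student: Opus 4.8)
The plan is to establish the properties (i)--(v) of the fundamental system $\{y_1,y_2\}$ by combining a standard ODE comparison argument for the lower bounds with explicit differentiation for the algebraic identities. First I would note that existence, uniqueness and continuous dependence on $\lambda$ of the solutions $y_1,y_2$ of $\mathcal{L}_b y=0$ with the prescribed Cauchy data at $t=s$ is classical, since $b\in\mathcal{C}^1\subset\mathcal{C}$; this takes care of the ``depends continuously on $\lambda$'' clause. For (i) and (ii) I would use the substitution that removes the damping: writing $y_j(t,s;\lambda,b)=\exp\!\big(-\tfrac12\int_s^t b(\tau)\,d\tau\big)\,z_j(t,s)$, one checks that $z_j$ solves $z_j'' - \big(\lambda^2 + \tfrac12 b'(t) + \tfrac14 b(t)^2\big) z_j = 0$, an equation of the form $z''=(\lambda^2+c(t))z$ with a potential that is not sign-definite, so a direct comparison is delicate. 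The cleaner route, and the one I expect the authors take, is to work with the integral (Volterra) representation directly: $y_1(t,s)=\cosh\lambda(t-s) - \int_s^t \frac{\sinh\lambda(t-\sigma)}{\lambda}\, b(\sigma)\,\partial_t y_1(\sigma,s)\,d\sigma$ and similarly for $y_2$, and then exploit the nonnegativity of $b$ together with monotonicity/positivity of the unperturbed kernels.

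The heart of the lower bounds is the following observation: since $b\geqslant 0$, the damping can only decrease the solution, but one must be careful because $\partial_t y_j$ need not have a fixed sign a priori. The robust way is to introduce $w(t)\doteq e^{\int_s^t b}\, \partial_t y_j(t,s)$, so that $w' = \lambda^2 e^{\int_s^t b} y_j$ and hence $\partial_t y_j(t,s) = e^{-\int_s^t b}\big(\partial_t y_j(s,s) + \lambda^2\int_s^t e^{\int_s^\sigma b} y_j(\sigma,s)\,d\sigma\big)$; integrating once more in $t$ gives a monotone iteration scheme on $C([s,\infty))$ whose fixed point is $y_j$, and since the unperturbed iterate is exactly $\cosh\lambda(t-s)$ (resp. $\lambda^{-1}\sinh\lambda(t-s)$) and all the kernels appearing are nonnegative, one reads off both that $y_j>0$ and that $y_j(t,s)\geqslant e^{-c\|b\|_{L^1}}$ times the corresponding unperturbed solution, with $c=1$ for $y_1$ and $c=2$ for $y_2$ coming from counting how many times the factor $e^{-\int_s^t b}\geqslant e^{-\|b\|_{L^1}}$ is picked up (once from the $\partial_t y_j$ formula, plus once more from integrating $\partial_t y_1$ in the representation of $y_2$, since $y_2(t,s)=\int_s^t y_1$-type expressions appear through the Wronskian). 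This is the step I expect to be the main obstacle: getting the constants $e^{-\|b\|_{L^1}}$ and $e^{-2\|b\|_{L^1}}$ exactly right, and justifying the sign of all the terms in the iteration rather than hand-waving ``damping decreases the energy''.

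For (iii)--(v) only routine computation is needed. For (iii), I would differentiate: if $\mathcal{L}_b y_2(t,s)=0$ in $t$ for each fixed $s$, then applying $\partial_s$-identities and the defining Cauchy data, one verifies that as a function of $s$ the map $s\mapsto y_2(t,s)$ solves the formal adjoint equation $\mathcal{L}_b^* y_2 = \partial_s^2 y_2 - \partial_s(b(s)y_2) - \lambda^2 y_2 = 0$; this is the standard fact that the Riemann function of a second-order operator solves the adjoint equation in the second variable, and it follows from differentiating the Lagrange identity $y\,\mathcal{L}_b z - z\,\mathcal{L}_b^* y = \partial_t(\cdots)$, or simply by checking that $\partial_s y_2$ and $\partial_s^2 y_2$ satisfy the right relations using $\partial_t\partial_s y_2(s,s)$ computed from the Cauchy data. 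For (v), differentiating the identity $y_2(s,s;\lambda,b)=0$ in $s$ gives $\partial_t y_2(s,s)+\partial_s y_2(s,s)=0$, and since $\partial_t y_2(s,s)=1$ by definition, $\partial_s y_2(s,s)=-1$, i.e. $\partial_s y_2(t,t;\lambda,b)=-1$. For (iv), I would use the constancy (up to the damping factor) of the Wronskian of $y_1,y_2$ together with (v): evaluating the representation of the solution of the Cauchy problem with data at $s=0$ against the adjoint solution $y_2(t,0;\cdot)$ and its $s$-derivative, the combination $b(0)y_2(t,0;\lambda,b)-\partial_s y_2(t,0;\lambda,b)$ is precisely the solution with Cauchy data $(1,0)$ at $s=0$, which is $y_1(t,0;\lambda,b)$; concretely this drops out of Duhamel/variation-of-parameters once one knows $\mathcal{L}_b^* y_2=0$ from (iii) and the boundary values from (v). None of (iii)--(v) presents a real difficulty.
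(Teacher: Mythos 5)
The paper does not actually prove this lemma: its ``proof'' is a one-line citation to Lemma 2.3 of Wakasa--Yordanov \cite{WakYor18damp}, with a pointer to relation (4.7) there for item (v). So there is nothing in the paper to compare against at the level of detail, and your attempt to supply a self-contained argument is going beyond the paper's own scope. Your treatment of (iii)--(v) is essentially correct: (v) is the differentiation of the identity $y_2(s,s)=0$ along the diagonal; (iii) is the standard adjoint property of the Riemann function; and (iv) follows by checking that $t\mapsto b(0)y_2(t,0)-\partial_s y_2(t,0)$ solves $\mathcal{L}_b y=0$ and has Cauchy data $(1,0)$ at $t=0$, using (v) and the computation $\partial_s\partial_t y_2(s,s)=b(s)$ (obtained by differentiating $\partial_t y_2(s,s)=1$ and using the equation to eliminate $\partial_t^2 y_2(s,s)$).

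For (i) and (ii), however, the argument as written has a real gap exactly where you flag the ``main obstacle.'' Your Volterra iteration is the right machinery, but the phrase ``one reads off \dots\ $y_j\geqslant e^{-c\|b\|_{L^1}}$ times the unperturbed solution'' glosses over the fact that the naive use of $e^{-\int_\sigma^\tau b}\geqslant e^{-\|b\|_{L^1}}$ \emph{at every iteration step} does not produce the claimed bound: it produces $\cosh\!\big(\lambda e^{-\|b\|_{L^1}/2}(t-s)\big)$ rather than $e^{-\|b\|_{L^1}}\cosh\lambda(t-s)$, since the small factor compounds in the exponent of $\cosh$ instead of factoring out. The correct observation, which is missing from your write-up, is that in the $k$-fold iterated integral for $y_1$ the subintervals $[\sigma_k,\tau_k]\subset[\sigma_{k-1},\tau_{k-1}]^c\subset\cdots$ appearing in the kernel $\prod_{i=1}^k e^{-\int_{\sigma_i}^{\tau_i}b}$ are pairwise disjoint inside $[s,t]$, so the whole product is $\geqslant e^{-\int_s^t b}\geqslant e^{-\|b\|_{L^1}}$ \emph{uniformly in $k$}; only then does the factor $e^{-\|b\|_{L^1}}$ come out of the series. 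Similarly, your heuristic for the $e^{-2\|b\|_{L^1}}$ in (ii) (``once more from integrating $\partial_t y_1$'') is not right: $y_2\neq\int_s^t y_1$. The clean route is the reduction-of-order identity $y_2(t,s)=y_1(t,s)\int_s^t \frac{e^{-\int_s^\tau b}}{y_1(\tau,s)^2}\,d\tau$ coming from the Wronskian $W(t)=e^{-\int_s^t b}$, and this needs, besides the lower bound of (i) for the prefactor and $e^{-\int_s^\tau b}\geqslant e^{-\|b\|_{L^1}}$ inside the integral, the \emph{upper} bound $y_1(\tau,s)\leqslant\cosh\lambda(\tau-s)$ in the denominator (which follows from the same Volterra series since $e^{-\int_\sigma^\tau b}\leqslant 1$); the latter is nowhere in your proposal. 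With these two ingredients one gets $y_2\geqslant e^{-2\|b\|_{L^1}}\cosh\lambda(t-s)\int_s^t\cosh^{-2}\lambda(\tau-s)\,d\tau=e^{-2\|b\|_{L^1}}\lambda^{-1}\sinh\lambda(t-s)$, which is (ii).
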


\begin{proof}
See Lemma 2.3 in \cite{WakYor18damp}. In particular, (v) follows by the first condition in \cite[relation (4.7)]{WakYor18damp}.
\end{proof}

\begin{prop} \label{prop lower bounds critical case} Let $b_1,b_2\in \mathcal{C}^1([0,\infty))\cap L^1([0,\infty))$ and let $u_0,v_0\in H^1(\mathbb{R}^n)$ and $u_1,v_1\in  L^2(\mathbb{R}^n)$ be nonnegative, pairwise nontrivial and compactly supported in $B_R$.  Let $(u,v)$ be an energy solution to \eqref{weakly coupled system} on $[0,T)$ according to Definition \ref{def energ sol intro} satisfying \eqref{support condition solution}. Then, the following estimates hold:
\begin{align}
\int_{\mathbb{R}^n}u(t,x)\, \eta_{r_1}(t,t,x) \, dx &\geqslant  e^{-\| b_1\|_{L^1}} \varepsilon \int_{\mathbb{R}^n}u_0(x) \xi_{r_1}(t,x) \, dx +  e^{-2\| b_1\|_{L^1}} \varepsilon t \int_{\mathbb{R}^n}u_1(x) \eta_{r_1}(t,0,x) \, dx\notag\\ & \quad + e^{-2\| b_1\|_{L^1}}  \int_0^t (t-s)\int_{\mathbb{R}^n}|v(s,x)|^p\eta_{r_1}(t,s,x) \, dx \, ds, \label{fund ineq mathcal U} \\
\int_{\mathbb{R}^n}v(t,x)\, \eta_{r_2}(t,t,x) \, dx &\geqslant e^{-\| b_2\|_{L^1}} \varepsilon \int_{\mathbb{R}^n}v_0(x) \xi_{r_2}(t,x) \, dx +  e^{-2\| b_2\|_{L^1}} \varepsilon t \int_{\mathbb{R}^n}v_1(x) \eta_{r_2}(t,0,x) \, dx \notag\\ & \quad + e^{- 2\| b_2\|_{L^1}}  \int_0^t (t-s)\int_{\mathbb{R}^n}|u(s,x)|^q\eta_{r_2}(t,s,x) \, dx \, ds \label{fund ineq mathcal V}
\end{align} for $r_1,r_2>-1$ and any $t\in (0,T)$.
\end{prop}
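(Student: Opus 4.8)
The plan is to use the auxiliary function $\eta_{r_1}(t,s,x)$ as a test function in the weak formulation~\eqref{def u weak} and exploit the adjoint identity~(iii) of Lemma~\ref{lemma y1,y2}. First I would fix $x$ and $\lambda$, and observe that by construction $\eta_{r_1}$ is built from the kernel $\tfrac{\sinh\lambda(t-s)}{\lambda(t-s)}$, which up to the factor $(t-s)^{-1}$ is exactly $y_2(t,s;\lambda,b_1)$ evaluated at the classical damped wave operator when $b_1\equiv 0$; for general $b_1$ the correct object is $y_2(t,s;\lambda,b_1)$ itself, so I would actually work with the modified test function $\Psi_\lambda(s,x)\doteq y_2(t,s;\lambda,b_1)\,e^{-\lambda(t+R)}\,\Phi(\lambda x)$ for fixed $t$, treating $t$ as a parameter and $s$ as the time variable. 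Since $\Delta_x \Phi(\lambda x)=\lambda^2\Phi(\lambda x)$ and $\mathcal{L}_{b_1}^*y_2=0$ by Lemma~\ref{lemma y1,y2}(iii), the bulk term $\int_0^t\!\int u\,\big(\Psi_{ss}-\Delta\Psi-\partial_s(b_1\Psi)\big)\,dx\,ds$ in~\eqref{def u weak} vanishes identically. This is the key cancellation that makes the whole scheme work.

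Next I would read off the boundary terms at $s=t$ and $s=0$ in~\eqref{def u weak}. At $s=t$ one uses $y_2(t,t;\lambda,b_1)=0$ and $\partial_s y_2(t,t;\lambda,b_1)=-1$ (Lemma~\ref{lemma y1,y2}(v)) to get a contribution $\int u(t,x)\,e^{-\lambda(t+R)}\Phi(\lambda x)\,dx$; at $s=0$ the combination $\varepsilon u_1 \Psi(0,x)-\varepsilon u_0 \Psi_s(0,x)+b_1(0)\varepsilon u_0\Psi(0,x)$ becomes, via Lemma~\ref{lemma y1,y2}(iv) ($y_1(t,0)=b_1(0)y_2(t,0)-\partial_s y_2(t,0)$), exactly $\varepsilon u_0\, y_1(t,0;\lambda,b_1)e^{-\lambda(t+R)}\Phi(\lambda x)+\varepsilon u_1\, y_2(t,0;\lambda,b_1)e^{-\lambda(t+R)}\Phi(\lambda x)$. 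The source term on the right is $\int_0^t\!\int |v(s,x)|^p\, y_2(t,s;\lambda,b_1)e^{-\lambda(t+R)}\Phi(\lambda x)\,dx\,ds$. Thus, after rearranging, one obtains for each fixed $\lambda\in(0,\lambda_0)$ an identity of the form $\int u(t,x)e^{-\lambda(t+R)}\Phi(\lambda x)dx = \varepsilon\int u_0\,y_1(t,0)e^{-\lambda(t+R)}\Phi(\lambda x)dx + \varepsilon\int u_1\,y_2(t,0)e^{-\lambda(t+R)}\Phi(\lambda x)dx + \int_0^t\!\int|v|^p\,y_2(t,s)e^{-\lambda(t+R)}\Phi(\lambda x)dx\,ds$.

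To finish, I would multiply this identity by $\lambda^{r_1}$ and integrate in $\lambda$ over $(0,\lambda_0)$. On the left, since $\cosh\lambda t\geqslant 1$ and all quantities are nonnegative (positivity of $u$ is inherited from nonnegativity of data and the source, which must be recorded — or one simply notes the left side is bounded above by $\int u(t,x)\eta_{r_1}(t,t,x)(t-|x|)\cdot(\cdots)$... actually the cleanest route is: $\int_0^{\lambda_0}\!\int u(t,x)e^{-\lambda(t+R)}\Phi(\lambda x)\lambda^{r_1}d\lambda\,dx \leqslant \int u(t,x)\eta_{r_1}(t,t,x)\cdot t\,dx$ is false in general, so instead I observe $\tfrac{\sinh\lambda(t-s)}{\lambda}\cdot\tfrac{1}{t-s} = \eta\text{-kernel}$ up to the prefactor, hence $y_2(t,s;\lambda,b_1)\geqslant e^{-2\|b_1\|_{L^1}}(t-s)\cdot\tfrac{\sinh\lambda(t-s)}{\lambda(t-s)}$ by Lemma~\ref{lemma y1,y2}(ii), producing the factor $(t-s)$ in the source term and the factor $t$ in the $u_1$-term; similarly $y_1(t,0;\lambda,b_1)\geqslant e^{-\|b_1\|_{L^1}}\cosh\lambda t\geqslant e^{-\|b_1\|_{L^1}}$ by Lemma~\ref{lemma y1,y2}(i), giving the $\xi_{r_1}$-term; and on the left $\cosh\lambda t\geqslant 1$ gives $\int u(t,x)e^{-\lambda(t+R)}\Phi(\lambda x)\lambda^{r_1}d\lambda\leqslant \int u(t,x)e^{-\lambda(t+R)}\cosh\lambda t\,\Phi(\lambda x)\lambda^{r_1}d\lambda$, i.e.\ $\eta_{r_1}(t,t,x)$ dominates). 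Assembling these four estimates yields~\eqref{fund ineq mathcal U}; the proof of~\eqref{fund ineq mathcal V} is identical with $u\leftrightarrow v$, $b_1\leftrightarrow b_2$, $p\leftrightarrow q$, $r_1\leftrightarrow r_2$.

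The main obstacle is \emph{justifying the use of the non-compactly-supported test function} $\Psi_\lambda$ in the weak identity~\eqref{def u weak}: one must check that the finite speed of propagation encoded in~\eqref{support condition solution} makes all integrals convergent and permits an approximation argument cutting $\Psi_\lambda$ off outside $\{|x|\leqslant s+R\}$, exactly as in the analogous step of Lemma~\ref{lemma lower bound nonlinearities}. The secondary technical point is the Fubini interchange of the $\lambda$-integration with the $s$- and $x$-integrations, which is harmless since on $(0,\lambda_0)$ all kernels are bounded and the spatial supports are compact for each $s<t$.
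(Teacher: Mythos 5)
Your proposal is correct and follows essentially the same route as the paper: use $\phi(s,x)=\Phi(\lambda x)\,y_2(t,s;\lambda,b_1)$ (with the constant prefactor $e^{-\lambda(t+R)}$ absorbed either in the test function or at the end) in \eqref{def u weak}, kill the bulk term via $\mathcal{L}^*_{b_1}y_2=0$ and $\Delta_x\Phi(\lambda x)=\lambda^2\Phi(\lambda x)$, read off the boundary contributions with Lemma~\ref{lemma y1,y2}(iv)--(v), apply the lower bounds (i)--(ii), and integrate in $\lambda$ over $(0,\lambda_0)$. The only wobble is the aside about $\cosh\lambda t\geqslant 1$ on the left-hand side: no domination argument is needed there, since after the $\lambda$-integration the left side is \emph{exactly} $\int u(t,x)\,\eta_{r_1}(t,t,x)\,dx$ because $\eta_r(t,t,x)$ is by definition the integral with kernel $\lim_{s\to t}\tfrac{\sinh\lambda(t-s)}{\lambda(t-s)}=1$, and the $\cosh\lambda t$ factor belongs only to $y_1(t,0)$ and hence to the $\xi_{r_1}$-term.
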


\begin{proof}
Thanks to \eqref{support condition solution} 
we have that $u(t,\cdot),v(t,\cdot)$ have compact support in $B_{R+t}$ for any $t\geqslant 0$. Therefore, we may employ \eqref{def u} and \eqref{def v} also for noncompactly supported test function. Moreover, by using a density argument we can weaken the regularity for the test functions in Definition \ref{def energ sol intro}.
%
Consequently, we may choose as test functions $$\phi=\phi(s,x)=\Phi(\lambda x)\, y_2(t,s;\lambda,b_1), \qquad \psi=\psi(s,x)=\Phi(\lambda x)\, y_2(t,s;\lambda,b_2),$$ where $\Phi$ is defined by \eqref{def eigenfunction laplace op}. As $\Phi$ is an eigenfunction of the Laplace operator and $y_2(t,s;\lambda,b_1)$, $ y_2(t,s;\lambda,b_2)$ solve $\mathcal{L}^*_{b_1}y=0$ and $\mathcal{L}^*_{b_2}y=0$, respectively, we get that $\phi$ and $\psi$ satisfy
\begin{align*}
&\phi_{ss}-\Delta \phi -\partial_s(b_1(s) \phi)=0 \,  \qquad b_1(0)\phi(0,x)-\phi_s(0,x)= \Phi(\lambda x) y_1(t,0; \lambda,b_1)  \ \ \mbox{and} \ \ \phi_s(t,x)= -\Phi(\lambda x),\\
&\psi_{ss}-\Delta \psi -\partial_s(b_2(s) \psi)=0 \qquad b_2(0)\psi(0,x)-\psi_s(0,x)= \Phi(\lambda x) y_1(t,0; \lambda,b_2)  \ \ \mbox{and} \ \ \psi_s(t,x)= -\Phi(\lambda x),
\end{align*} where we employed (iv) and (v) from Lemma \ref{lemma y1,y2} to get the relations for the values of $\phi$ and $\psi$ at $s=0,t$.

Let us prove \eqref{fund ineq mathcal U}. Using the above defined $\phi$ in \eqref{def u weak} and its properties, we get
\begin{align*}
\int_{\mathbb{R}^n} u(t,x) \Phi(\lambda x) \,dx &= \varepsilon y_1(t,0;\lambda,b_1)\int_{\mathbb{R}^n} u_0(x) \Phi(\lambda x) \, dx + \varepsilon y_2(t,0;\lambda,b_1)\int_{\mathbb{R}^n} u_1(x) \Phi(\lambda x) \, dx \\ & \quad +\int_0^t   y_2(t,s;\lambda,b_1)\int_{\mathbb{R}^n}|v(s,x)|^p \Phi(\lambda x) \, dx,
\end{align*} where we used also the condition $\phi(t,x)=0$, which follows immediately from the initial values of $y_2(t,s;\lambda,b_1)$ prescribed in the statement of Lemma \ref{lemma y1,y2}. Using the estimates from below (i) and (ii) in Lemma \ref{lemma y1,y2}, we obtain from the previous relation 
\begin{align*}
\int_{\mathbb{R}^n} u(t,x) \Phi(\lambda x) \,dx & \geqslant \varepsilon e^{-\| b_1\|_{L^1}} \cosh \lambda t \int_{\mathbb{R}^n} u_0(x) \Phi(\lambda x) \, dx + \varepsilon  e^{-2\| b_1\|_{L^1}} \frac{\sinh \lambda t}{\lambda} \int_{\mathbb{R}^n} u_1(x) \Phi(\lambda x) \, dx \\ & \quad +  e^{-2\| b_1\|_{L^1}} \int_0^t  \frac{\sinh \lambda(t-s)}{\lambda} \int_{\mathbb{R}^n}|v(s,x)|^p \Phi(\lambda x) \, dx.
\end{align*} Multiplying both sides of the last inequality by $e^{-\lambda(t+R)}\lambda^{r_1}$, integrating with respect to $\lambda$ over $[0,\lambda_0]$ and applying Tonelli's theorem, we get finally \eqref{fund ineq mathcal U}. In order to prove \eqref{fund ineq mathcal V}, it is sufficient to repeat the above steps after plugging the prescribed $\psi$ function in \eqref{def v weak}. Hence, the proof is complete.
\end{proof}

\subsection{Lower bound estimates} \label{Subsection crit case: lower bounds estimates}

Hereafter until the end of Section  \ref{Section critical case}, we will assume that $u_0,u_1,v_0,v_1$ satisfy the assumptions from the statement of Theorem \ref{Thm critical case}. Furthermore, without loss of generality we assume that $(p,q)$ satisfies the critical condition $F(n,p,q)=0$, because the case $F(n,q,p)=0$ is completely symmetric, assumed the switch of $p,q$ and $u,v$, respectively. Let $(u,v)$ be an energy solution of \eqref{weakly coupled system} on $[0,T)$. We introduce the following time-dependent functionals
\begin{equation}\label{defn functional crit case}
\begin{split}
\mathcal{U}(t)&\doteq \int_{\mathbb{R}^n}u(t,x) \, \eta_{r_1}(t,t,x) \, dx \, , \\
\mathcal{V}(t)&\doteq \int_{\mathbb{R}^n}v(t,x) \, \eta_{r_2}(t,t,x) \, dx \, .
\end{split}
\end{equation} Let us point out that we will prescribe in the next proposition the exact assumptions for the parameters $r_1,r_2$. From Proposition \ref{prop lower bounds critical case} it follows immediately the positiveness of the functionals $\mathcal{U},\mathcal{V}$.

The next step is to derive two integral inequalities involving $\mathcal{U}$ and $\mathcal{V}$ in a ``coupled way'', and, as we have just mentioned, this goal will somehow fix the range for $(r_1,r_2)$. Let us point out explicitly that the case $p>q$ and the case $p=q$ (see Remark \ref{rmk p and q relation} below) will be treated separately with a different choice of the pair $(r_1,r_2)$ (which will correspond to a different frame for the iteration scheme). 

\begin{rem} \label{rmk p and q relation} Since we assume that $(p,q)$ satisfies $F(n,p,q)=0\leqslant F(n,q,p)$ it may be either $F(n,q,p)<0$ or $F(n,q,p)=0$. Due to the monotonicity of the function $f=f(p)=p-p^{-1}$ for $p>1$, in the first case we are in the case $p>q$, while in the latter case we have $p=q$. Moreover, the condition $F(n,p,p)=0$ it equivalent to require $p=p_0(n)$, so that $F(n,p,q)=F(n,q,p)=0$ corresponds to the limit case $p=q=p_0(n)$.
\end{rem}

\begin{prop} \label{prop integral ineq critic case} Let us assume that $r_1,r_2$ are given parameters satisfying $r_1=\frac{n-1}{2}-\frac{1}{q}$ and 
\begin{itemize}
\item  $r_2>\frac{n-1}{2}-\frac{1}{p}$ if $p>q$;
\item $r_2=\frac{n-1}{2}-\frac{1}{p}$ if $p=q$.
\end{itemize}  Let $\mathcal{U},\mathcal{V}$ be the functionals defined by \eqref{defn functional crit case}. Then, there exist positive constants $C$ and $K$ depending on $n,p,q,R,b_1,b_2$ such that for any $t\geqslant 0$ the following estimates hold:
\begin{align}
\mathcal{U}(t) &\geqslant C \langle t\rangle^{-1} \int_0^t(t-s) \langle s \rangle^{\frac{n-1}{2}+1+\frac{1}{q}+(r_2+1-n)p}  (\mathcal{V}(s))^p \, ds \, ,  \label{II2}\\
\mathcal{V}(t) &\geqslant K \langle t\rangle^{-1} \int_0^t(t-s) \langle s \rangle^{-r_2-\frac{n-1}{2}q+n-1} \big(\log\langle s \rangle\big)^{-(q-1)} (\mathcal{U}(s))^q \, ds \,  \label{II1}
\end{align}
 for $p>q$ and
\begin{align}
\mathcal{U}(t) &\geqslant C \langle t\rangle^{-1} \int_0^t(t-s) \langle s \rangle^{-1}  \big(\log\langle s \rangle\big)^{-(p-1)} (\mathcal{V}(s))^p \, ds \, ,  \label{II2 crit}\\
\mathcal{V}(t) &\geqslant K \langle t\rangle^{-1} \int_0^t(t-s) \langle s \rangle^{-1} \big(\log\langle s \rangle\big)^{-(q-1)} (\mathcal{U}(s))^q \, ds \,  \label{II1 crit}
\end{align} for  $p=q$.
\end{prop}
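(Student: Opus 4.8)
The plan is to derive \eqref{II2}--\eqref{II1 crit} from the fundamental inequalities \eqref{fund ineq mathcal U}--\eqref{fund ineq mathcal V} in Proposition \ref{prop lower bounds critical case} by estimating the various integrals with the pointwise bounds on $\xi_r$ and $\eta_r$ collected in Lemma \ref{lemma eta and xi estimates}. First I would start from \eqref{fund ineq mathcal U}: since $u_0\geqslant 0$ and $\xi_{r_1}\geqslant 0$, and $u_1\geqslant 0$ with $\eta_{r_1}\geqslant 0$, the first two terms on the right-hand side are nonnegative and may simply be dropped, so that $\mathcal{U}(t)$ is bounded below by $e^{-2\|b_1\|_{L^1}}\int_0^t(t-s)\int_{\mathbb{R}^n}|v(s,x)|^p\eta_{r_1}(t,s,x)\,dx\,ds$. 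By part (ii) of Lemma \ref{lemma eta and xi estimates} (valid since $r_1>-1$ for all relevant $n\geqslant2$) one has $\eta_{r_1}(t,s,x)\geqslant B_1\langle t\rangle^{-1}\langle s\rangle^{-r_1}$ on the support of $v(s,\cdot)$, which produces the factor $\langle t\rangle^{-1}\langle s\rangle^{-r_1}$ in front of the inner integral. It then remains to bound $\int_{\mathbb{R}^n}|v(s,x)|^p\,dx$ from below by a power of $\langle s\rangle$ times $(\mathcal{V}(s))^p$.

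The key tool for that last bound is Hölder's inequality applied exactly as in \eqref{factor}: writing $\mathcal{V}(s)=\int v(s,x)\eta_{r_2}(s,s,x)\,dx$, one gets
\begin{align*}
(\mathcal{V}(s))^p \leqslant \Big(\int_{|x|\leqslant s+R}|v(s,x)|^p\,dx\Big)\Big(\int_{|x|\leqslant s+R}\eta_{r_2}(s,s,x)^{p'}\,dx\Big)^{p-1},
\end{align*}
so that $\int|v(s,x)|^p\,dx\geqslant (\mathcal{V}(s))^p\big(\int_{|x|\leqslant s+R}\eta_{r_2}(s,s,x)^{p'}\,dx\big)^{-(p-1)}$. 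Here I would invoke part (iii) of Lemma \ref{lemma eta and xi estimates}, which requires $r_2>\frac{n-3}{2}$ — this is guaranteed by the hypotheses $r_2\geqslant\frac{n-1}{2}-\frac1p$ — giving $\eta_{r_2}(s,s,x)\leqslant B_2\langle s\rangle^{-\frac{n-1}{2}}\langle s-|x|\rangle^{\frac{n-3}{2}-r_2}$, and then integrate this $p'$-th power over $|x|\leqslant s+R$ by passing to polar coordinates. The radial integral $\int_0^{s+R}\langle s-\rho\rangle^{(\frac{n-3}{2}-r_2)p'}\rho^{n-1}\,d\rho$ behaves, up to constants, like $\langle s\rangle^{n-1}$ when $(\frac{n-3}{2}-r_2)p'<-1$, i.e. $r_2>\frac{n-1}{2}-\frac1p$ (the strict case), producing $\big(\int\eta_{r_2}^{p'}\big)^{p-1}\lesssim \langle s\rangle^{(\frac{n-1}{2}-\frac{n-1}{2})(p-1)}$ ... more precisely $\langle s\rangle^{(-\frac{n-1}{2}+\frac{n-1}{p'})(p-1)} = \langle s\rangle^{(r_2+1-n)p+\frac{n-1}{2}+\frac1q}$ after simplification using $r_1=\frac{n-1}{2}-\frac1q$ and the critical relation $F(n,p,q)=0$; this is the exponent appearing in \eqref{II2}. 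When instead $r_2=\frac{n-1}{2}-\frac1p$ (the case $p=q$) the integrand is $\langle s-\rho\rangle^{-1}$ in the critical borderline and the radial integral picks up an extra logarithm, so $\big(\int\eta_{r_2}^{p'}\big)^{p-1}\lesssim \langle s\rangle^{\,\cdot\,}(\log\langle s\rangle)^{p-1}$, which is exactly the factor $(\log\langle s\rangle)^{-(p-1)}$ when moved to the other side, yielding \eqref{II2 crit}. The estimates \eqref{II1} and \eqref{II1 crit} are obtained by the symmetric argument starting from \eqref{fund ineq mathcal V}, using $r_2>-1$ in part (ii) and $r_1=\frac{n-1}{2}-\frac1q$ in part (iii); since $r_1$ sits exactly on the borderline $\frac{n-1}{2}-\frac1q$ (which is the critical value for the $q$-integral because of $F(n,p,q)=0$), the radial integral for $\eta_{r_1}^{q'}$ always produces a logarithm, which accounts for the $(\log\langle s\rangle)^{-(q-1)}$ factor present in both \eqref{II1} and \eqref{II1 crit}.

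The main obstacle I anticipate is the careful bookkeeping of the exponents: one must track the powers of $\langle s\rangle$ coming from (a) the lower bound on $\eta_{r_i}(t,s,x)$, (b) the $p'$- or $q'$-th power of the upper bound on $\eta_{r_j}(s,s,x)$ integrated in the space variable, and (c) the conversion of $1/p'$ into $1-1/p$, and then verify that the critical relation $F(n,p,q)=0$ collapses these into precisely the exponents displayed in \eqref{II2}--\eqref{II1 crit}. A secondary technical point is justifying the case distinction $r_2>\frac{n-1}{2}-\frac1p$ versus $r_2=\frac{n-1}{2}-\frac1p$: in the strict case the radial integral converges without a logarithm, whereas on the borderline it diverges logarithmically, and one must check that the choice $r_1=\frac{n-1}{2}-\frac1q$ together with $F(n,p,q)=0$ forces the $q$-side to always be borderline. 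Once the exponent computations are done, all constants can be absorbed into $C$ and $K$ (which depend on $n,p,q,R,b_1,b_2$ through $B_1$, $B_2$, $\lambda_0$, $r_1$, $r_2$ and $e^{-2\|b_i\|_{L^1}}$), and dropping the two nonnegative initial-data terms is harmless for the lower bounds claimed.
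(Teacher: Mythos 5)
Your overall architecture — drop the nonnegative data terms, pull out $\eta_{r_1}(t,s,x)\geqslant B_1\langle t\rangle^{-1}\langle s\rangle^{-r_1}$ by Lemma~\ref{lemma eta and xi estimates}(ii), then apply H\"older in the form $\int|v(s,x)|^p\,dx\geqslant(\mathcal{V}(s))^p\big(\int\eta_{r_2}(s,s,x)^{p'}\,dx\big)^{-(p-1)}$ and control the spatial integral with (iii) — is an equivalent reorganisation of the paper's argument (the paper instead smuggles $\eta_{r_1}(t,s,x)^{1/p}$ into the H\"older weight, but one checks the two bookkeepings collapse to the same exponents), and your identification of the borderline $(\frac{n-3}{2}-r_1)q'=-1$ on the $q$-side, which is responsible for the logarithms in \eqref{II1} and \eqref{II1 crit}, is correct.

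However, there is a genuine gap in the exponent bookkeeping for \eqref{II2}. You correctly record that for $(\frac{n-3}{2}-r_2)p'<-1$ the radial integral $\int_0^{s+R}\langle s-\rho\rangle^{(\frac{n-3}{2}-r_2)p'}\rho^{n-1}\,d\rho$ is $\lesssim\langle s\rangle^{n-1}$, so that $\int_{B_{s+R}}\eta_{r_2}(s,s,x)^{p'}\,dx\lesssim\langle s\rangle^{-\frac{n-1}{2}p'+n-1}$. Raising to the power $-(p-1)$ and using $p'(p-1)=p$ then gives $\big(\int\eta_{r_2}^{p'}\big)^{-(p-1)}\gtrsim\langle s\rangle^{\frac{n-1}{2}p-(n-1)(p-1)}$ (your intermediate expression $\langle s\rangle^{(-\frac{n-1}{2}+\frac{n-1}{p'})(p-1)}$ is a slip), and combining with the factor $\langle s\rangle^{-r_1}$ from (ii) and $r_1=\frac{n-1}{2}-\frac1q$ yields the total exponent $\frac{n-1}{2}+\frac1q-\frac{n-1}{2}p$. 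This is \emph{not} the exponent $\frac{n-1}{2}+1+\frac1q+(r_2+1-n)p$ stated in \eqref{II2}; the two differ by $1+p\big(r_2-\frac{n-1}{2}\big)$, which is strictly positive precisely because $r_2>\frac{n-1}{2}-\frac1p$. So the asserted ``after simplification using $r_1=\frac{n-1}{2}-\frac1q$ and $F(n,p,q)=0$'' does not in fact land on the proposition's exponent; in particular the quantity you derive does not even involve $r_2$. The proposition's exponent only arises if one uses the bound $\int_{B_{s+R}}\langle s-|x|\rangle^{\gamma}\,dx\lesssim\langle s\rangle^{n+\gamma}$ (this is what the paper does in its own proof of \eqref{II2}), but this bound fails when $\gamma<-1$: the neighbourhood $|x|\approx s$ already contributes a quantity of order $\langle s\rangle^{n-1}>\langle s\rangle^{n+\gamma}$, which is exactly the correct $\langle s\rangle^{n-1}$ you wrote down. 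You should either exhibit a different argument that yields the stated exponent (which would have to produce something strictly stronger than what H\"older plus Lemma~\ref{lemma eta and xi estimates} gives), or flag that the proposition's exponent in \eqref{II2}, as well as the intermediate bound in the paper's proof, cannot be reached by this route in the range $r_2>\frac{n-1}{2}-\frac1p$.
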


\begin{proof} 
For the proof of this result we will follow the main ideas of Proposition 4.2 in \cite{WakYor18}. Let us begin with the proof in the case $p>q$. 
By H\"older's inequality and the support property for $v(s,\cdot)$, we obtain
\begin{align}
\mathcal{V}(s) \leqslant \bigg(\int_{\mathbb{R}^n}|v(s,x)|^p \eta_{r_1}(t,s,x) \, dx\bigg)^{\frac{1}{p}} \Bigg(\int_{B_{s+R}}\frac{\eta_{r_2}(s,s,x)^{p'}}{\eta_{r_1}(t,s,x)^{\frac{p'}{p}}}\, dx\Bigg)^{\frac{1}{p'}}. \label{holder + compact supp v}
\end{align} We estimate the second factor on the right hand side in the last inequality. By (ii) and (iii) in Lemma \ref{lemma eta and xi estimates} (note that, according to our choice in the statement of this proposition, both conditions $r_1,r_2 >\frac{n-3}{2}$ and $r_1,r_2>-1$ are always fulfilled), we obtain
\begin{align*}
\int_{B_{s+R}}\frac{\eta_{r_2}(s,s,x)^{p'}}{\eta_{r_1}(t,s,x)^{\frac{p'}{p}}}\, dx & \lesssim \langle t\rangle^{\frac{p'}{p}}\langle s\rangle^{-\frac{n-1}{2}p'+\frac{p'}{p}r_1}\int_{B_{s+R}}\langle s-|x|\rangle^{(\frac{n-3}{2}-r_2)p'} \, dx \\
& \lesssim \langle t\rangle^{\frac{p'}{p}}\langle s\rangle^{-\frac{n-1}{2}p'+\frac{p'}{p}r_1+n+(\frac{n-3}{2}-r_2)p'},
\end{align*} where in the  last step we used the assumption on $r_2$ which is equivalent to require a power smaller than $-1$ for the term $\langle s-|x|\rangle$ in the integral. Combining  \eqref{fund ineq mathcal U}, \eqref{holder + compact supp v} and the previous estimate, we find
\begin{align*}
\mathcal{U}(t) &\gtrsim \int_0^t (t-s) \int_{\mathbb{R}^n}|v(s,x)|^p \eta_{r_1}(t,s,x) \, dx \, ds \gtrsim  \int_0^t (t-s) (\mathcal{V}(s))^{p} \langle t\rangle^{-1}\langle s\rangle^{\frac{n-1}{2}p-r_1-n(p-1)-(\frac{n-3}{2}-r_2)p} \, ds   \\ 
  & \gtrsim \langle t\rangle^{-1}  \int_0^t (t-s) (\mathcal{V}(s))^{p} \langle s\rangle^{\frac{n-1}{2}+1+\frac{1}{q}+(r_2+1-n)p} \, ds.
\end{align*} 

Let us prove now \eqref{II1}. Analogously to \eqref{holder + compact supp v}, we get
\begin{align}
\mathcal{U}(s) \leqslant \bigg(\int_{\mathbb{R}^n}|u(s,x)|^q \eta_{r_2}(t,s,x) \, dx\bigg)^{\frac{1}{q}} \Bigg(\int_{B_{s+R}}\frac{\eta_{r_1}(s,s,x)^{q'}}{\eta_{r_2}(t,s,x)^{\frac{q'}{q}}}\, dx\Bigg)^{\frac{1}{q'}}. \label{holder + compact supp u}
\end{align} Employing again (ii) and (iii) in Lemma \ref{lemma eta and xi estimates} and thanks the choice of the parameter $r_1$, we arrive at
\begin{align*}
\int_{B_{s+R}}\frac{\eta_{r_1}(s,s,x)^{q'}}{\eta_{r_2}(t,s,x)^{\frac{q'}{q}}}\, dx & \lesssim \langle t\rangle^{\frac{q'}{q}}\langle s\rangle^{-\frac{n-1}{2}q'+\frac{q'}{q}r_2}\int_{B_{s+R}}\langle s-|x|\rangle^{(\frac{n-3}{2}-r_1)q'} \, dx \\
& = \langle t\rangle^{\frac{q'}{q}}\langle s\rangle^{-\frac{n-1}{2}q'+\frac{q'}{q}r_2}\int_{B_{s+R}}\langle s-|x|\rangle^{-1} \, dx \lesssim \langle t\rangle^{\frac{q'}{q}}\langle s\rangle^{-\frac{n-1}{2}q'+\frac{q'}{q}r_2+n-1} \log\langle s \rangle.
\end{align*} If we combine \eqref{fund ineq mathcal V}, \eqref{holder + compact supp u} and the last estimate, we have
\begin{align}
\mathcal{V}(t) & \gtrsim \int_0^t (t-s) \int_{\mathbb{R}^n}|u(s,x)|^q \eta_{r_2}(t,s,x) \, dx \, ds  \notag \\ & \gtrsim \int_0^t (t-s)  (\mathcal{U}(s))^{q} \langle t\rangle^{-1}\langle s\rangle^{\frac{n-1}{2}q-r_2-(n-1)(q-1)} \big(\log\langle s \rangle\big)^{-(q-1)} \, ds \notag\\
& =  \langle t\rangle^{-1} \int_0^t (t-s) (\mathcal{U}(s))^{q} \langle s\rangle^{-r_2-\frac{n-1}{2}q+n-1} \big(\log\langle s \rangle\big)^{-(q-1)} \, ds. \label{II1 p>q proof}
\end{align} In the case $p=q=p_0(n)$, if we plug the value of $r_2$ in \eqref{II1 p>q proof}, then, thanks to $-\frac{n-1}{2}(q-1)+\frac{1}{q}=-1$, we get immediately \eqref{II1 crit}. Due to symmetry reasons the proof of \eqref{II2 crit} is totally analogous. This completes the proof of the proposition.
\end{proof}

The integral inequalities derived in the last proposition will play a fundamental role in the iteration argument. However, in order to start with this iteration argument we have to derive a lower bound for the functional $\mathcal{U}$ containing a logarithmic term. 
For this purpose we will combine the lower bounds for the nonlinearities that we have shown in the subcritical case in Lemma \ref{lemma lower bound nonlinearities} with the estimates from Lemma \ref{lemma eta and xi estimates} and Proposition \ref{prop integral ineq critic case}.

\begin{lem}\label{lower bound functionals log terms crit} Let $p,q>1$ satisfy $F(n,p,q)=0$. Then, for any $t\geqslant \frac{3}{2}$ the following estimates hold:
\begin{align*}
\mathcal{U}(t) &\geqslant \widetilde{C} \varepsilon^{pq} \log\big(\tfrac{2t}{3}\big) \qquad \mbox{if} \ \ p>q, \\
\mathcal{U}(t) &\geqslant \widetilde{C} \varepsilon^{p} \log\big(\tfrac{2t}{3}\big) \ \qquad \mbox{if} \ \ p=q, 
\end{align*} where $\widetilde{C}$ is a positive constant depending on $n,p,q,u_0,u_1,v_0,v_1,b_1,b_2,R$.
\end{lem}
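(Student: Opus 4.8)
The plan is to feed the polynomial lower bounds for the nonlinearities from Lemma \ref{lemma lower bound nonlinearities} into the integral inequality \eqref{fund ineq mathcal U}, discard the two nonnegative boundary terms involving the initial data, and keep only the integral term; then exploit the lower bound for $\eta_{r_1}(t,s,x)$ from part (ii) of Lemma \ref{lemma eta and xi estimates}. Concretely, starting from
\begin{align*}
\mathcal{U}(t) &\geqslant e^{-2\|b_1\|_{L^1}}\int_0^t (t-s)\int_{\mathbb{R}^n}|v(s,x)|^p \eta_{r_1}(t,s,x)\,dx\,ds,
\end{align*}
I would use $\eta_{r_1}(t,s,x)\geqslant B_1\langle t\rangle^{-1}\langle s\rangle^{-r_1}$ on the support of $v(s,\cdot)$ together with \eqref{Priori v^p} from Lemma \ref{lemma lower bound nonlinearities}, namely $\int_{\mathbb{R}^n}|v(s,x)|^p\,dx\geqslant K_1\varepsilon^p(1+s)^{n-1-\frac{n-1}{2}p}$, to obtain
\begin{align*}
\mathcal{U}(t)\gtrsim \varepsilon^p \langle t\rangle^{-1}\int_0^t (t-s)\,\langle s\rangle^{-r_1}(1+s)^{n-1-\frac{n-1}{2}p}\,ds.
\end{align*}
Now I would substitute the prescribed value $r_1=\frac{n-1}{2}-\frac{1}{q}$, so the exponent of the $\langle s\rangle$-type factor becomes $-\frac{n-1}{2}+\frac{1}{q}+n-1-\frac{n-1}{2}p = \frac{n+1}{2}+\frac{1}{q}-\frac{n-1}{2}p$; the critical relation $F(n,p,q)=0$, i.e. $\frac{p+2+q^{-1}}{pq-1}=\frac{n-1}{2}$, is exactly what forces this exponent to equal $-1$ (one checks $\frac{n+1}{2}+\frac1q-\frac{n-1}{2}p = -1 \iff \frac{n-1}{2}(p+1)=\frac{n+3}{2}+\frac1q \iff \frac{n-1}{2}=\frac{p+2+q^{-1}}{pq-1}$ after clearing the $p$... actually this uses $pq$; I would double-check this by instead iterating once more, see below). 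Granting the exponent is $-1$, for $t\geqslant\frac32$ one estimates $\int_0^t(t-s)\langle s\rangle^{-1}\,ds\gtrsim t\log\langle t\rangle\gtrsim t\log(\tfrac{2t}{3})$ by restricting the integral to $s\in[t/2,t]$, giving a first bound $\mathcal{U}(t)\gtrsim \varepsilon^p\log(\tfrac{2t}{3})$; this already settles the case $p=q$ up to renaming the constant.

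For the case $p>q$ the single application above only uses $v$; to produce the power $\varepsilon^{pq}$ I would instead run the coupling once. That is, first get the analogue $\mathcal{V}(t)\gtrsim \varepsilon^q\langle t\rangle^{\text{(some power)}}$ or a polynomial lower bound for $\mathcal{V}$ directly from \eqref{fund ineq mathcal V} and \eqref{Priori u^q} (using $\eta_{r_2}(t,s,x)\geqslant B_1\langle t\rangle^{-1}\langle s\rangle^{-r_2}$ and $\int|u|^q\gtrsim \varepsilon^q(1+s)^{n-1-\frac{n-1}{2}q}$), then insert this polynomial lower bound for $\mathcal{V}(s)$ into the coupled inequality \eqref{II2} from Proposition \ref{prop integral ineq critic case}. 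Tracking the exponents, the combination $F(n,p,q)=0$ should again collapse the resulting $\langle s\rangle$-exponent to $-1$, and the same $[t/2,t]$-truncation yields $\mathcal{U}(t)\gtrsim \varepsilon^{pq}\log(\tfrac{2t}{3})$ for $t\geqslant\frac32$. The constant $\widetilde{C}$ collects $B_1$, $K_1$ (or $C_1$), the exponential damping factors $e^{-\|b_j\|_{L^1}}$, and $\meas(B_1)$-type geometric constants, hence depends only on $n,p,q,u_0,u_1,v_0,v_1,b_1,b_2,R$ as claimed.

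The main obstacle I expect is the bookkeeping of the $\langle s\rangle$ exponents: one must verify carefully that, with the specific choices $r_1=\frac{n-1}{2}-\frac1q$ (and, in the $p>q$ branch, the intermediate use of $\mathcal{V}$), the critical identity $F(n,p,q)=0$ is precisely the equality that makes the integrand behave like $\langle s\rangle^{-1}$ rather than an integrable or super-linear power — it is this borderline algebra that is responsible for the logarithmic (rather than polynomial or bounded) growth of $\mathcal{U}$, and getting a sign or a factor of $p$ wrong there would change the conclusion. A secondary technical point is handling the region $s\in[0,3/2)$ versus $s\in[3/2,t]$ and the replacement of $\langle s\rangle=3+|s|$ by comparable elementary expressions, but this is routine once the exponent count is confirmed. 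Everything else — discarding nonnegative terms, the $[t/2,t]$ truncation, and absorbing constants — is elementary.
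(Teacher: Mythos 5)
Your overall plan matches the paper's proof: feed the polynomial lower bound for $\int|v|^p$ from Lemma \ref{lemma lower bound nonlinearities} into \eqref{fund ineq mathcal U}, drop the nonnegative data terms, estimate $\eta_{r_1}$ from below via Lemma \ref{lemma eta and xi estimates}(ii), and for $p>q$ first get a polynomial lower bound for $\mathcal{V}$ and insert it into \eqref{II2} so that the power $\varepsilon^{pq}$ appears. The arithmetic slip (you wrote $\tfrac{n+1}{2}$ where $\tfrac{n-1}{2}$ should appear) is harmless; the correct single-pass exponent is $\tfrac{n-1}{2}(1-p)+\tfrac{1}{q}$, which equals $-1$ on the critical curve precisely when $p=q$, so the extra pass through the coupling is indeed unavoidable for $p>q$, as you anticipate.

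The genuine gap is in the extraction of the logarithm. You claim $\int_0^t(t-s)\langle s\rangle^{-1}\,ds\gtrsim t\log(\tfrac{2t}{3})$ ``by restricting the integral to $s\in[t/2,t]$.'' This is false: for $s\in[t/2,t]$ one has $\langle s\rangle^{-1}\asymp t^{-1}$ and $\int_{t/2}^t(t-s)\,ds\asymp t^2$, so the truncated integral is only $\asymp t$, with no logarithmic gain (substituting $u=t-s$ makes this transparent: $\int_{t/2}^t(t-s)\langle s\rangle^{-1}\,ds=\int_0^{t/2}\tfrac{u}{3+t-u}\,du\asymp t$). The $\log$ in $\int_0^t(t-s)\langle s\rangle^{-1}\,ds\asymp t\log t$ is accumulated over the many dyadic scales of $s$ well below $t$, and truncating to a fixed-proportion neighborhood of $s=t$ destroys it. The paper's device, which you omit, is to first rewrite $\int_1^t\tfrac{t-s}{s}\,ds=\int_1^t\log s\,ds$ (by interchanging the order of integration in the corresponding double integral) and only afterwards restrict the transformed integral to $[\tfrac{2t}{3},t]$, where the integrand $\log s\geqslant\log(\tfrac{2t}{3})$ is already of the required size. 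This is not cosmetic: without a genuine $\log$ factor in the conclusion, the slicing iteration of Section \ref{Subsection crit case: slicing method} starts with $a_0=0$ rather than $a_0=1$, and the quantity $A-B$ in Section \ref{Subsection crit case: lifespan estimate} is then no longer guaranteed to be positive, so the blow-up argument would not close.
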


\begin{proof}
 From \eqref{fund ineq mathcal U}, estimate (ii) in Lemma \ref{lemma eta and xi estimates}, \eqref{Priori v^p} and the definition of $r_1$ we obtain for $t\geqslant 1$
\begin{align}
\mathcal{U}(t) & \gtrsim \int_0^t (t-s)\int_{\mathbb{R}^n}|v(s,x)|^p\eta_{r_1}(t,s,x) \, dx \, ds  \gtrsim \langle  t\rangle^{-1} \int_0^t (t-s) \langle s\rangle^{-r_1}\int_{\mathbb{R}^n}|v(s,x)|^p \, dx \, ds \notag \\
&  \gtrsim \varepsilon^p \langle  t\rangle^{-1} \int_0^t (t-s) \langle s\rangle^{-r_1+n-1-\frac{n-1}{2}p} \, ds =\varepsilon^p \langle  t\rangle^{-1} \int_0^t (t-s) \langle s\rangle^{\frac{n-1}{2}+\frac{1}{q}-\frac{n-1}{2}p} \, ds. \label{inter est 1 U}
\end{align} 

Similarly, by \eqref{fund ineq mathcal V} and \eqref{Priori u^q} we get  for $t\geqslant 1$
\begin{align}
\mathcal{V}(t) 
&  \gtrsim \varepsilon^q \langle  t\rangle^{-1} \int_0^t (t-s) \langle s\rangle^{-r_2+n-1-\frac{n-1}{2}q} \, ds . \label{inter est 1 V}
\end{align}

In the special case $p=q=p_0(n)$, the power of $\langle s \rangle$ in the integral in the right hand side of \eqref{inter est 1 U} is $-1$. Hence, we may estimate for $t\geqslant \frac{3}{2}$
\begin{align}
\int_0^t (t-s) \langle s\rangle^{-1} \, ds & \gtrsim \int_1^t \frac{t-s}{s} \, ds = \int_1^t \log s \, ds \geqslant \int_\frac{2t}{3}^t \log s \, ds \gtrsim t \log \big(\tfrac{2t}{3}\big) \gtrsim \langle t\rangle \log \big(\tfrac{2t}{3}\big). \label{inter est 2}
\end{align} Combining \eqref{inter est 1 U} and \eqref{inter est 2}, we get the desired estimate in the case $p=q$. In the case $q>p$, keeping on the estimate in \eqref{inter est 1 V}, we find  for $t\geqslant 1$
\begin{align*}
\mathcal{V}(t) 
&  \gtrsim \varepsilon^q \langle  t\rangle^{-1-r_2-\frac{n-1}{2}q} \int_{\tfrac{t}{2}}^t (t-s) \langle s\rangle^{n-1} \, ds \gtrsim \varepsilon^q \langle  t\rangle^{-1-r_2-\frac{n-1}{2}q}  \langle \tfrac{t}{2}\rangle^{n-1} \int_{\tfrac{t}{2}}^t (t-s) \, ds \gtrsim \varepsilon^q \langle  t\rangle^{-r_2-\frac{n-1}{2}q+n} . 
\end{align*} Now we plug the previous lower bound in \eqref{II2}, after shrinking the domain of integration, and we arrive for $t\geqslant \frac{3}{2}$ at
\begin{align*}
\mathcal{U}(t) & \gtrsim \varepsilon^{pq} \langle t\rangle^{-1} \int_1^t(t-s) \langle s \rangle^{\frac{n-1}{2}+1+\frac{1}{q}+(r_2+1-n)p-r_2 p-\frac{n-1}{2}pq+n p}   \, ds \\
& = \varepsilon^{pq} \langle t\rangle^{-1} \int_1^t(t-s) \langle s \rangle^{-\frac{n-1}{2}(pq-1)+p+1+\frac{1}{q}}  \, ds   = \varepsilon^{pq} 
 \langle t\rangle^{-1}\int_1^t(t-s) \langle s \rangle^{-1}  \, ds  \gtrsim \varepsilon^{pq}   \log\big(\tfrac{2t}{3}\big),
\end{align*} where we used the condition $F(n,p,q)=0$ in the second last step and again \eqref{inter est 2} in the last step. This concludes the proof.
\end{proof}

\subsection{Iteration argument via slicing method} 
\label{Subsection crit case: slicing method}

In this section we apply the so-called slicing method, which has been introduced for the first time in \cite{AKT00}, in order to prove a family of lower bound estimates for $\mathcal{U}$. Let us introduce the sequence $\{\ell_j\}_{j\in \mathbb{N}}$, where $\ell_j\doteq 2-2^{-(j+1)}$. The goal of this iteration method is to prove 
\begin{align}
\mathcal{U}(t)\geqslant C_j (\log\langle t\rangle)^{-b_j} \bigg(\log\bigg(\frac{t}{\ell_{2j}}\bigg)\bigg)^{a_j} \qquad \mbox{for} \ \ t\geqslant \ell_{2j} \ \ \mbox{and for any} \ \ j\in \mathbb{N}, \label{iter ineq crit case}
\end{align} where $\{C_j\}_{j\in\mathbb{N}}$,  $\{a_j\}_{j\in\mathbb{N}}$ and  $\{b_j\}_{j\in\mathbb{N}}$ are sequences of nonnegative real numbers that we shall determine afterwards.
For $j=0$ we know that \eqref{iter ineq crit case} is true thanks to Lemma \ref{lower bound functionals log terms crit} with 
\begin{align*}
C_0\doteq \begin{cases} \widetilde{C} \varepsilon^{pq} & \mbox{if} \ \ p>q , \\ \widetilde{C} \varepsilon^{p} & \mbox{if} \ \ p=q, \end{cases} \qquad a_0\doteq 1 \quad \mbox{and} \quad b_0\doteq 0.
\end{align*}

We are going to prove the validity of \eqref{iter ineq crit case} by using an inductive proof. As we have already remarked the validity of the base case, it remains to prove the inductive step. Let us assume that \eqref{iter ineq crit case} holds for $j\geqslant 1$, we want to prove it now for $j+1$. Because of the different frame in \eqref{II2}-\eqref{II1} and in \eqref{II2 crit}-\eqref{II1 crit}, we shall consider separately the cases $p>q$ and $p=q$.

\subsubsection*{Case $p>q$}

Combining \eqref{II1} and \eqref{iter ineq crit case} for $j$, for any $s\geqslant \ell_{2j+1}$ it follows
\begin{align}
\mathcal{V}(s) &\geqslant K \langle s\rangle^{-1} \int_{\ell_{2j}}^s(s-\tau) \langle \tau \rangle^{-r_2-\frac{n-1}{2}q+n-1} \big(\log\langle \tau \rangle\big)^{-(q-1)} (\mathcal{U}(\tau))^q \, d\tau \notag \\
&\geqslant K C_j^q \langle s\rangle^{-1} \int_{\ell_{2j}}^s(s-\tau) \langle \tau \rangle^{-r_2-\frac{n-1}{2}q+n-1} \big(\log\langle \tau \rangle\big)^{-(q-1)-b_j q} \left(\log\left(\tfrac{\tau}{\ell_{2j}}\right)\right)^{a_j q}\, d\tau \notag \\
&\geqslant K C_j^q \big(\log\langle s \rangle\big)^{-(q-1)-b_j q} \langle s \rangle^{-1-r_2-\frac{n-1}{2}q}\int_{\ell_{2j}}^s(s-\tau) \langle \tau \rangle^{n-1} \left(\log\left(\tfrac{\tau}{\ell_{2j}}\right)\right)^{a_j q}\, d\tau. \label{estimate mathcalV inter 1} 
\end{align} 
We can estimate now the integral in the last line of the previous chain of inequalities as follows
\begin{align}
\int_{\ell_{2j}}^s(s-\tau) \langle \tau \rangle^{n-1} \left(\log\left(\tfrac{\tau}{\ell_{2j}}\right)\right)^{a_j q}\, d\tau & \geqslant \int_{\tfrac{\ell_{2j} s}{\ell_{2j+1}}}^s(s-\tau)\, \tau^{n-1} \left(\log\left(\tfrac{\tau}{\ell_{2j}}\right)\right)^{a_j q}\, d\tau \notag \\
& \geqslant \left(\tfrac{\ell_{2j}}{\ell_{2j+1}}\right)^{n-1} s^{n-1}\left(\log\left(\tfrac{s}{\ell_{2j+1}}\right)\right)^{a_j q}\int_{\tfrac{\ell_{2j} s}{\ell_{2j+1}}}^s(s-\tau) \, d\tau \notag  \\
& \geqslant  2^{-n}  \left(1-\tfrac{\ell_{2j}}{\ell_{2j+1}}\right)^2 s^{n+1}\left(\log\left(\tfrac{s}{\ell_{2j+1}}\right)\right)^{a_j q} \notag \\
&\geqslant 2^{-4j-3n-8} \langle s \rangle^{n+1} \left(\log\left(\tfrac{s}{\ell_{2j+1}}\right)\right)^{a_j q}, \label{estimate mathcalV inter 2}
\end{align}
where we used the relation $\langle y \rangle\geqslant y \geqslant \frac{1}{4} \langle y \rangle$ for $y\geqslant 1$ in the first and last inequality. Moreover, in the first line we might restrict the domain of integration since $\ell_{2j}<\ell_{2j+1}$, in the third one we used the inequality $2\ell_{2j} \geqslant \ell_{2j+1}$ and, finally, we employed the condition $1-\frac{\ell_k}{\ell_{k+1}}\geqslant 2^{-(k+3)}$.
 Hence, plugging \eqref{estimate mathcalV inter 2} in \eqref{estimate mathcalV inter 1}, for any $s\geqslant \ell_{2j+1}$ we find
\begin{align*}
\mathcal{V}(s) &\geqslant 2^{-4j-3n-8} K C_j^q\big(\log\langle s \rangle\big)^{-(q-1)-b_j q} \langle s \rangle^{-r_2-\frac{n-1}{2}q+n} \left(\log\left(\tfrac{s}{\ell_{2j+1}}\right)\right)^{a_j q}.
\end{align*}
Next we use the previous lower bound for $\mathcal{V}(s)$ in \eqref{II2}, so that for $t\geqslant \ell_{2j+2}$ we have
\begin{align} 
\mathcal{U}(t) &\geqslant  2^{-(4j+3n+8)p} C K^p C_j^{pq}  \langle t\rangle^{-1} \! \! \int_{\ell_{2j+1}}^t \!(t-s) \langle s \rangle^{-\frac{n-1}{2}(pq-1)+p+1+\frac{1}{q}}  \! \big(\log\langle s \rangle\big)^{-p(q-1)-b_j pq} \left(\log\left(\tfrac{s}{\ell_{2j+1}}\right)\right)^{a_j pq}  ds \notag\\
&\geqslant  2^{-(4j+3n+8)p} C K^p C_j^{pq}  \big(\log\langle t \rangle\big)^{-p(q-1)-b_j pq} \langle t\rangle^{-1} \int_{\ell_{2j+1}}^t(t-s) \langle s \rangle^{-1}  \left(\log\left(\tfrac{s}{\ell_{2j+1}}\right)\right)^{a_j pq} \, ds \notag \\
&\geqslant  2^{-(4j+3n+8)p-2} C K^p C_j^{pq} \big(\log\langle t \rangle\big)^{-p(q-1)-b_j pq} \langle t\rangle^{-1} \int_{\ell_{2j+1}}^t\tfrac{t-s}{s} \left(\log\left(\tfrac{s}{\ell_{2j+1}}\right)\right)^{a_j pq} \, ds, \label{estimate mathcalU inter 1} 
\end{align} where in the second inequality the condition $F(n,p,q)=0$ implies that the exponent of the factor $\langle s \rangle$ is exactly $-1$. Using integration by parts, we may estimate the last integral in the following way:
\begin{align}
\int_{\ell_{2j+1}}^t\tfrac{t-s}{s} \left(\log\left(\tfrac{s}{\ell_{2j+1}}\right)\right)^{a_j pq} \, ds 
& =  (a_jpq+1)^{-1}  \int_{\ell_{2j+1}}^t \left(\log\left(\tfrac{s}{\ell_{2j+1}}\right)\right)^{a_j pq+1} \, ds \notag \\
& \geqslant   (a_jpq+1)^{-1}  \int_{\tfrac{\ell_{2j+1}t}{\ell_{2j+2}}}^t \left(\log\left(\tfrac{s}{\ell_{2j+1}}\right)\right)^{a_j pq+1} \, ds \notag \\
& \geqslant  (a_jpq+1)^{-1} \left(1-\tfrac{\ell_{2j+1}}{\ell_{2j+2}}\right)  t \, \left(\log\left(\tfrac{t}{\ell_{2j+2}}\right)\right)^{a_j pq+1}\notag  \\
& \geqslant  2^{-2j-6}  (a_jpq+1)^{-1} \langle t \rangle\left(\log\left(\tfrac{t}{\ell_{2j+2}}\right)\right)^{a_j pq+1}, \label{estimate mathcalU inter 2}
\end{align} where in the second step it is possible to shrink the domain of integration due to $t\geqslant \ell_{2j+2}$. Also, combining \eqref{estimate mathcalU inter 1} and \eqref{estimate mathcalU inter 2}, we have
\begin{align*}
\mathcal{U}(t) &\geqslant 2^{-(2p+1)2j-(3n+8)p-8} C K^p C_j^{pq} (a_jpq+1)^{-1} \big(\log\langle t \rangle\big)^{-p(q-1)-b_j pq} \left(\log\left(\tfrac{t}{\ell_{2j+2}}\right)\right)^{a_j pq+1}.
\end{align*}
Therefore, if we put 
\begin{align} C_{j+1}\doteq 2^{-(2p+1)2j-(3n+8)p-8} C K^p C_j^{pq} (a_jpq+1)^{-1} , \ \  a_{j+1}\doteq a_j pq+1\ \ \mbox{ and} \  \ b_{j+1} \doteq p(q-1)+b_j pq,
\end{align} then, we proved \eqref{iter ineq crit case} for $j+1$ in the case $p>q$.

Let us determine explicitly the expressions of $a_j$ and $b_j$. By using recursively the above relations, we find
\begin{align}
a_j  & = a_{j-1} pq+1  = a_0(pq)^j +\sum_{k=0}^{j-1} (pq)^k  =(pq)^j  +\tfrac{(pq)^j-1}{pq-1}= \tfrac{(pq)^{j+1}-1}{pq-1}\, , \label{def aj crit case}\\
b_j & = p(q-1)+b_{j-1} pq = p(q-1)\sum_{k=0}^{j-1} (pq)^k+b_{0} (pq)^j = \tfrac{p(q-1)}{pq-1}\big((pq)^j-1\big). \label{def bj crit case}
\end{align} In particular, $$a_{j-1} pq+1=\tfrac{(pq)^{j+1}-1}{pq-1}\leqslant \tfrac{pq}{pq-1}(pq)^j, $$ 
which implies in turn
\begin{align}\label{inter est 3}
C_j\geqslant M \Theta^{-j} C_{j-1}^{pq},
\end{align} where  $\Theta\doteq 2^{2(2p+1)}pq$ and $M\doteq 2^{-(3n+4)p-6}CK^p \tfrac{(pq-1)}{(pq)}$. 

\subsubsection*{Case $p=q$}

We have to modify slightly the procedure seen in the case $p>q$, by using \eqref{II2 crit}-\eqref{II1 crit} in place of \eqref{II2}-\eqref{II1}. Using \eqref{II1 crit} and \eqref{iter ineq crit case} for $j$, for any $s\geqslant \ell_{2j+1}$ we have
%
\begin{align}
\mathcal{V}(s) &\geqslant K \langle s\rangle^{-1} \int_{\ell_{2j}}^s(s-\tau) \langle \tau \rangle^{-1} \big(\log\langle \tau \rangle\big)^{-(q-1)} (\mathcal{U}(\tau))^q \, d\tau \notag \\
&\geqslant K C_j^q \langle s\rangle^{-1} \int_{\ell_{2j}}^s(s-\tau) \langle \tau \rangle^{-1} \big(\log\langle \tau \rangle\big)^{-(q-1)-b_j q} \left(\log\left(\tfrac{\tau}{\ell_{2j}}\right)\right)^{a_j q}\, d\tau \notag \\
&\geqslant 2^{-2} K C_j^q \big(\log\langle s \rangle\big)^{-(q-1)-b_j q} \langle s \rangle^{-1}\int_{\ell_{2j}}^s\tfrac{s-\tau}  {\tau} \left(\log\left(\tfrac{\tau}{\ell_{2j}}\right)\right)^{a_j q}\, d\tau. \label{estimate mathcalV inter 3} 
\end{align} Then,
\begin{align}
\int_{\ell_{2j}}^s \tfrac{s-\tau}{\tau} \left(\log\left(\tfrac{\tau}{\ell_{2j}}\right)\right)^{a_j q}\, d\tau 
&=  (a_j q+1)^{-1} \int_{\ell_{2j}}^s \left(\log\left(\tfrac{\tau}{\ell_{2j}}\right)\right)^{a_j q+1}\, d\tau \notag \\
&\geqslant (a_j q+1)^{-1} \int_{\tfrac{\ell_{2j} s}{\ell_{2j+1}}}^s \left(\log\left(\tfrac{\tau}{\ell_{2j}}\right)\right)^{a_j q+1} \, d\tau \notag \\ 
&\geqslant   (a_j q+1)^{-1}  \left(1-\tfrac{\ell_{2j}}{\ell_{2j+1}}\right) s \left(\log\left(\tfrac{s}{\ell_{2j+1}}\right)\right)^{a_j q+1} \notag \\
&\geqslant 2^{-(2j+5)}  (a_j q+1)^{-1}  \langle s\rangle \left(\log\left(\tfrac{s}{\ell_{2j+1}}\right)\right)^{a_j q+1} . \label{estimate mathcalV inter 4}
\end{align}
 A crucial difference with respect to the case $p>q$ is that we can increase the power for  the logarithmic term,  using integration by parts, even in this first stage of the inductive step.
Plugging \eqref{estimate mathcalV inter 4} in \eqref{estimate mathcalV inter 3}, we get
\begin{align*}
\mathcal{V}(s) &\geqslant 2^{-(2j+7)} K C_j^q  (a_j q+1)^{-1}  \big(\log\langle s \rangle\big)^{-(q-1)-b_j q}  \left(\log\left(\tfrac{s}{\ell_{2j+1}}\right)\right)^{a_j q+1}.
\end{align*} 
Then, we combine the previous lower bound for $\mathcal{V}(s)$ with \eqref{II2 crit}, so that for $t\geqslant \ell_{2j+2}$ it follows
\begin{align}
 \mathcal{U}(t) & \geqslant  2^{-(2j+7)p} C K^p C_j^{pq} (a_j q+1)^{-p} \langle t\rangle^{-1} \int_{\ell_{2j+1}}^t(t-s) \langle s \rangle^{-1}  \big(\log\langle s \rangle\big)^{-(pq-1)-b_j pq} \left(\log\left(\tfrac{s}{\ell_{2j+1}}\right)\right)^{a_j pq+p}  \, ds \notag \\
&\geqslant  2^{-(2j+7)p} C K^p C_j^{pq} (a_j q+1)^{-p} \big(\log\langle t \rangle\big)^{-(pq-1)-b_j pq} \langle t\rangle^{-1} \int_{\ell_{2j+1}}^t(t-s) \langle s \rangle^{-1}   \left(\log\left(\tfrac{s}{\ell_{2j+1}}\right)\right)^{a_j pq+p} \, ds  \notag \\
&\geqslant  2^{-(2j+7)p-2} C K^p C_j^{pq} (a_j q+1)^{-p}  \big(\log\langle t \rangle\big)^{-(pq-1)-b_j pq} \langle t\rangle^{-1} \int_{\ell_{2j+1}}^t \tfrac{t-s}{ s}   \left(\log\left(\tfrac{s}{\ell_{2j+1}}\right)\right)^{a_j pq+p} \, ds. \label{estimate mathcalU inter 3} 
\end{align} We use again integration by parts. Thus,
\begin{align}
 \int_{\ell_{2j+1}}^t \tfrac{t-s}{ s}   \left(\log\left(\tfrac{s}{\ell_{2j+1}}\right)\right)^{a_j pq+p} \, ds
& = (a_j pq+p+1)^{-1}   \int_{\ell_{2j+1}}^t  \left(\log\left(\tfrac{s}{\ell_{2j+1}}\right)\right)^{a_j pq+p+1} \, ds \notag \\
& \geqslant   (a_j pq+p+1)^{-1}   \int_{\tfrac{\ell_{2j+1}t}{\ell_{2j+2}}}^t \left(\log\left(\tfrac{s}{\ell_{2j+1}}\right)\right)^{a_j pq+p+1} \, ds \notag \\
& \geqslant  (a_j pq+p+1)^{-1}   \left(1-\tfrac{\ell_{2j+1}}{\ell_{2j+2}}\right) t \left(\log \left(\tfrac{t}{\ell_{2j+2}}\right)\right)^{a_j pq+p+1} \notag \\
& \geqslant    2^{-2(j+3)}  (a_j pq+p+1)^{-1} \langle t \rangle \left(\log\left(\tfrac{t}{\ell_{2j+2}}\right)\right)^{a_j pq+p+1}. \label{estimate mathcalU inter 4} 
\end{align}
If we combine \eqref{estimate mathcalU inter 3} and \eqref{estimate mathcalU inter 4}, then, we arrive at 
\begin{align*}
\mathcal{U}(t) & \geqslant  2^{-(p+1)2j-7p-8} C K^p C_j^{pq} (a_j q+1)^{-p} (a_j pq+p+1)^{-1}  \big(\log\langle t \rangle\big)^{-(pq-1)-b_j pq}\left(\log\left(\tfrac{t}{\ell_{2j+2}}\right)\right)^{a_j pq+p+1}.
\end{align*}
  Putting 
\begin{align*} C_{j+1}&\doteq 2^{-(p+1)2j-7p-8} C K^p C_j^{pq} (a_j q+1)^{-p} (a_j pq+p+1)^{-1}, \\  a_{j+1}&\doteq a_j pq+p+1\ \ \mbox{ and} \  \ b_{j+1} \doteq (pq-1)+b_j pq,
\end{align*} from the last inequality we get \eqref{iter ineq crit case} for $j+1$ when $p=q$.

Let us write the expressions of $a_j$ and $b_j$, 
\begin{align}
a_j  & = a_{j-1} pq+p+1  = a_0(pq)^j +(p+1)\sum_{k=0}^{j-1} (pq)^k  = \big(1+\tfrac{p+1}{pq-1}\big)(pq)^j  -\tfrac{p+1}{pq-1}\, , \label{def aj crit case p=q}\\
b_j & = (pq-1)+b_{j-1} pq  = (pq-1)\sum_{k=0}^{j-1} (pq)^k+b_{0} (pq)^j = (pq)^j-1. \label{def bj crit case p=q}
\end{align} Therefore, 
\begin{align*}
a_{j-1} pq+p+1& =\tfrac{p(q+1)}{pq-1}(pq)^{j+1}-\tfrac{p+1}{pq-1}\leqslant \tfrac{p(q+1)(pq)}{pq-1}(pq)^j , \\
a_{j-1} q+1 &=\tfrac{pq(q+1)}{pq-1}(pq)^{j}-\tfrac{q+1}{pq-1}\leqslant \tfrac{pq(q+1)}{pq-1}(pq)^j ,
\end{align*} 
so that we have again \eqref{inter est 3} but now with $\Theta\doteq 2^{2(p+1)}(pq)^{p+1}$ and $M\doteq 2^{-5p-6}CK^p \tfrac{(pq-1)^{p+1}}{p(q+1)^{p+1}(pq)^{p+1}}$. 

\subsubsection*{Lower bound for $C_j$}

Let us derive now a lower bound for $C_j$, in which the dependence on $j$ can be more easily handled than in \eqref{inter est 3}. Applying the logarithmic function to both sides of \eqref{inter est 3} and iterating the obtained relation, we get
\begin{align*}
\log C_j & \geqslant (pq) \log C_{j-1} -j\log \Theta+\log M \\
& \geqslant (pq)^2  \log C_{j-2}  -\big(j+(j-1)(pq)\big)\log \Theta+(1+pq)\log M  
\\
& \geqslant \cdots  \geqslant (pq)^j  \log C_{0}  -\sum_{k=0}^{j-1}(j-k)(pq)^k\log \Theta+\sum_{k=0}^{j-1} (pq)^k \log M  \\
 & = (pq)^j  \log C_{0}  - (pq)^j\sum_{k=1}^{j}\frac{k}{(pq)^k}\log \Theta+\frac{(pq)^j-1}{pq-1} \log M \\
 & = (pq)^j  \bigg( \log C_{0}  - S_j \log\Theta+\frac{\log M }{pq-1} \bigg) -\frac{\log M  }{pq-1} ,
\end{align*} where $S_j\doteq\sum_{k=1}^{j}\frac{k}{(pq)^k}$. By the ratio test it follows immediately that $\{S_j\}_{j\geqslant 1}$ is the sequence of partial sums of a convergent series. Therefore, if we denote by $S$ the limit of this sequence, then, since $S_j\uparrow S$ as $j\to \infty$ we may estimate 
\begin{align}\label{lower bound Cj crit case}
 C_j & \geqslant   M^{-(pq-1)} \exp \Big( (pq)^j   \log \Big( C_{0} \Theta^{-S} M^{pq-1} \Big) \Big).
\end{align}


\subsection{Conclusion of the proof of Theorem \ref{Thm critical case}} \label{Subsection crit case: lifespan estimate}

In this section we complete the proof in the critical case $F(n,p,q)=0$. 
Summing up the results of the last section, from \eqref{def aj crit case}, \eqref{def bj crit case}, \eqref{def aj crit case p=q}, \eqref{def bj crit case p=q} it follows the validity of \eqref{iter ineq crit case} with 
\begin{align}\label{def aj and bj both cases}
a_j=A(pq)^j+1-A, \qquad b_j=B(pq)^j-B,
\end{align} where
\begin{align*}
A & \doteq \begin{cases} \frac{pq}{pq-1} & \mbox{if} \ \ p>q, \\ 1+\frac{p+1}{pq-1} & \mbox{if} \ \ p=q,\end{cases} \qquad B  \doteq \begin{cases} \frac{p(q-1)}{pq-1} & \mbox{if} \ \ p>q, \\ 1  & \mbox{if} \ \ p=q.\end{cases}
\end{align*}

Combining \eqref{iter ineq crit case}, \eqref{lower bound Cj crit case} and  \eqref{def aj and bj both cases}, we arrive at 
\begin{align*}
\mathcal{U}(t) & \geqslant M^{-(pq-1)} \exp \Big( (pq)^j   \log \big(  C_{0} \Theta^{-S} M^{pq-1} \big) \Big) (\log\langle t\rangle)^{-B(pq)^j+B} \bigg(\log\bigg(\frac{t}{\ell_{2j}}\bigg)\bigg)^{A(pq)^j+1-A} \\
& \geqslant M^{-(pq-1)} \exp \Big( (pq)^j     \log \big( C_{0} \Theta^{-S} M^{pq-1} \big) \Big) (\log\langle t\rangle)^{-B(pq)^j+B} \big(\log\big(\tfrac{t}{2}\big)\big)^{A(pq)^j+1-A} \\
 & \geqslant M^{-(pq-1)} \exp \Big( (pq)^j   \log \Big(  C_{0} \Theta^{-S} M^{pq-1} (\log\langle t\rangle)^{-B} \big(\log\big(\tfrac{t}{2}\big)\big)^{A} \Big) \Big) (\log\langle t\rangle)^{B} \big(\log\big(\tfrac{t}{2}\big)\big)^{1-A}
\end{align*} for any $t\geqslant 2$.
 Since $\log(3+t)\leqslant \log(2t)\leqslant 2 \log t$ and $\log(\frac{t}{2})\geqslant\frac{1}{2}\log t$ for any $t\geqslant 4$, from the last estimate we may derive the following estimate for $t\geqslant 4$:
\begin{align}
\mathcal{U}(t) &  \geqslant M^{-(pq-1)} \exp \Big( (pq)^j   \log \Big( 2^{-B-A} C_{0} \Theta^{-S} M^{pq-1} (\log t)^{A-B} \Big) \Big) (\log\langle t\rangle)^{B} \big(\log\big(\tfrac{t}{2}\big)\big)^{1-A}. \label{inter est 4}
\end{align}
Let us consider separately the cases $p>q$ and $p=q$.

\subsubsection*{Case $p>q$} In this case $C_0= \widetilde{C} \varepsilon^{pq}$. Hence, \eqref{inter est 4} implies
\begin{align}
\mathcal{U}(t) &  \geqslant M^{-(pq-1)} \exp \Big( (pq)^j   \log \Big( E \varepsilon^{pq}  (\log t)^{\frac{p}{pq-1}} \Big) \Big) (\log\langle t\rangle)^{B} \big(\log\big(\tfrac{t}{2}\big)\big)^{1-A}, \label{final lower bound mathcal U}
\end{align} where $E\doteq 2^{-B-A} \widetilde{C} \, \Theta^{-S} M^{pq-1}$. Let us denote $K(t)\doteq \log \Big( E \varepsilon^{pq}  (\log t)^{\frac{p}{pq-1}} \Big)$. 

We can choose $\varepsilon_0=\varepsilon_0(u_0,u_1,v_0,v_1,n,p,q,b_1,b_2,R)>0$ so small that
\begin{align*}
\exp\Big(E^{-\frac{pq-1}{p}} \varepsilon_0^{-q(pq-1)}\Big)\geqslant 4.
\end{align*} Therefore, for any $\varepsilon\in(0,\varepsilon_0]$ and any $t\geqslant \exp\Big(E^{-\frac{pq-1}{p}} \varepsilon^{-q(pq-1)}\Big)$ we get $t \geqslant 4$ $K(t)>0$ and, consequently, taking the limit in \eqref{final lower bound mathcal U} as $j\to \infty$ we find that $\mathcal{U}(t)$ is not finite.
 Also, we proved the upper bound estimate  for the lifespan $T\leqslant \exp\Big(E^{-\frac{pq-1}{p}} \varepsilon^{-q(pq-1)}\Big)$.

\subsubsection*{Case $p=q$} In this case $C_0= \widetilde{C} \varepsilon^{p}$. Therefore, \eqref{inter est 4} yields
\begin{align*}
\mathcal{U}(t) &  \geqslant M^{-(pq-1)} \exp \Big( (pq)^j   \log \Big( E \varepsilon^{p}  (\log t)^{\frac{1}{p-1}} \Big) \Big) (\log\langle t\rangle)^{B} \big(\log\big(\tfrac{t}{2}\big)\big)^{1-A}.
\end{align*} 
Repeating the same steps as in the first case, we get the upper bound estimate for the lifespan
$$T\leqslant \exp\Big(E^{-(p-1)} \varepsilon^{-p(p-1)}\Big).$$
This conclude the proof of Theorem \ref{Thm critical case}.

\section*{Acknowledgments}

The first author is member of the Gruppo Nazionale per L'Analisi Matematica, la Probabilit\`{a} e le loro Applicazioni (GNAMPA) of the Instituto Nazionale di Alta Matematica (INdAM). This paper was written partially during the stay of the first author at Tohoku University in 2018. He would like to thank the Mathematical Department of Tohoku University for the hospitality and the excellent working conditions during this period. 
The second author is partially supported by the Grant-in-Aid for Scientific Research (B)(No.18H01132).

\vspace*{0.5cm}





\end{document}